\documentclass[a4paper,11pt]{article}
\usepackage[nointlimits]{amsmath}
\usepackage{amsfonts}
\usepackage{amssymb}
\usepackage{amsmath}
\usepackage{amsthm}
\usepackage{latexsym}
\usepackage{graphicx}
\usepackage{xcolor}
\usepackage{layout}
\usepackage{float}
\usepackage{hyperref}
\usepackage{tikz}
\usetikzlibrary{arrows.meta,calc,decorations.markings}
\usepackage[english]{babel}
\newtheorem{lemma1}     {Lemma}[section]
\newtheorem{teorema1}   [lemma1]{Theorem}
\newtheorem{prop1}      [lemma1]{Proposition}
\newtheorem{coroll1}    [lemma1]{Corollary}
\newtheorem{cong1}      [lemma1]{Conjecture}
\newtheorem{remark1}    [lemma1]{Remark}
\newtheorem{defin1}     [lemma1]{Definition}

\newenvironment{Lemma}[1][]
        {\begin{lemma1}[#1]\begin{samepage}}{\end{samepage}\end{lemma1}}
\newenvironment{Theorem}[1][]
        {\begin{teorema1}[#1]\begin{samepage}}{\end{samepage}\end{teorema1}}
\newenvironment{Proposition}[1][]
        {\begin{prop1}[#1]\begin{samepage}}{\end{samepage}\end{prop1}}

\newenvironment{Remark}[1][]
        {\begin{remark1}[#1]\begin{samepage}}{\end{samepage}\end{remark1}}
\newenvironment{Definition}[1][]
        {\begin{defin1}[#1]\begin{samepage}}{\end{samepage}\end{defin1}}

\numberwithin{equation}{section} 
\newtheorem{theorem}{Theorem}[section]

\newtheorem{lemma}[theorem]{Lemma}

\newtheorem{example1}   [lemma1]{Example}

\newcommand{\argf}{\Theta}
\newcommand{\lift}{\overline{\Theta}_\Ga}

\newcommand{\eps}{\epsilon}
\newcommand{\de}{\delta_\eps}
\newcommand{\dgae}{\dot\Ga_\eps}

\newcommand{\ex}{L}
\newcommand{\pq}{L}

\newcommand{\lgae}{\ell(\gae)}
\newcommand{\ga}{\gamma}
\newcommand{\Ga}{\Upsilon}

\newcommand{\gae}{\gamma_\eps}

\newcommand{\kae}{\kappa_{\gae}}
\newcommand{\sech}{\operatorname{sech}}
\newcommand{\R}       {\mathbb R}

\newcommand{\raccordo}{\varsigma_\eps}
\newcommand{\keycon}{\eta_\eps}
\newcommand{\recovery}{\gae}
\newcommand{\trec}{\tilde\gae}
\newcommand{\parameter}{x}
\newcommand{\radius}{\rho}
\newcommand{\prim}{\tau_\eps}

\newcommand{\nada}[1]{}

\title{Concentration effects and $\Gamma$-limit       
for the elastica functional for open and closed curves}

\author{
		Giovanni Bellettini\footnote{ 
		 Dipartimento di Scienze Matematiche, Informatiche e Fisiche,
via delle Scienze 206, 33100 Udine UD, Italy,
and International Centre for Theoretical Physics ICTP, Mathematics Section, 34151 Trieste, Italy.
		E-mail: giovanni.bellettini@uniud.it}
	\and
	Virginia Lorenzini\footnote{
		Dipartimento di Ingegneria dell'Informazione e Scienze Matematiche, Universit\`a di Siena, 53100 Siena, Italy.
		E-mail: virginia.lorenzini@student.unisi.it
	}
    \and
	Matteo Novaga \footnote{ 
		Dipartimento di Matematica, Universit\`a di Pisa, 56127 Pisa, Italy.
		E-mail: matteo.novaga@unipi.it}
    \and
	Riccardo Scala\footnote{ 
		Dipartimento di Ingegneria dell'Informazione e Scienze Matematiche, Universit\`a di Siena, 53100 Siena, Italy.
		E-mail: riccardo.scala@unisi.it}
}

\begin{document}
\maketitle
\begin{abstract}
\noindent
We study the $\Gamma$-convergence
of a class of elastica-type energies defined on immersed planar curves and depending on a  small positive parameter $\eps$.
As $\eps\to 0^+$, sequences with equibounded energy develop concentration phenomena in the curvature, leading to the emergence of singularities described by atomic measures. This naturally gives rise to a limiting framework in terms of pointed curves, consisting of a curve together with a measure encoding curvature concentration.
We characterize the first-order $\Gamma$-limit in two settings: for immersed open curves with fixed endpoints and boundary conditions on the tangents, and for immersed closed curves of prescribed length. In both cases, the limiting energy depends only on the number of concentration points and is expressed as a sum of contributions, each given by an integer multiple of $2\pi$.
A key feature of the problem is that the rescaled energies exhibit a structure closely related to one-dimensional Modica--Mortola type functionals.
\end{abstract}
\noindent {\bf Key words:}~~ $\Gamma$-expansion, concentration, elastica, singular perturbation.

\vspace{2mm}

\noindent {\bf AMS (MOS) 2020 Subject Clas\-si\-fi\-ca\-tion:}  49J45, 49Q20, 35B25

\section{Introduction}
In this paper, we investigate the $\Gamma$-limit
of elastica-type
functionals defined on immersed planar curves and depending on a small
parameter $\eps>0$.
This asymptotic analysis
is motivated by the fact that it captures information on the way curvature concentrates in small regions, which is not detected at the level of the zeroth-order limit. In this regime, the appropriate limiting objects are not described only by the limiting curve, but also by an additional measure recording the possible concentration of curvature. This naturally leads to the notion of \emph{pointed curves}, namely pairs formed by a curve $\ga$ and a compatible Radon measure $\mu$.

\medskip

We deal with two situations.
The first one concerns immersed plane curves $\ga\in H^2([0,\ell(\ga)];\mathbb R^2)$ joining two fixed points
$p,q\in\mathbb R^2$, $p,q\in\{y=0\}$, $p\neq q$ and with horizontal tangent at the endpoints. For this problem we consider the energy
$$
F_\eps(\ga)
:=
\int_0^{\ell(\ga)}(1+\eps \kappa_\ga^2)\,ds,
$$
where, as usual, $ds$ is the arclength element, $\kappa_\ga$ is the curvature of the curve, $\ell(\ga)$ its length and $\eps\in(0,1]$.
As $\eps\to 0^+$, minimizing sequences of $F_\eps$ approach the straight segment joining
$p$ and $q$, whose length is $|p-q|$. Our aim is to
study the asymptotic
behaviour of $F_\eps$, namely
\begin{equation}
\begin{aligned}\label{eq:G_eps}
G_\eps(\ga) :=
\frac{F_\eps(\ga)
- 
\vert p -q\vert}{\eps^{1/2}}
=
 \eps^{1/2}\int_0^{\ell(\ga)} \kappa_\ga^2~ds 
 + \frac{1}{\eps^{1/2}} 
\left(
\ell(\ga) - \vert p-q\vert\right)  
\end{aligned},
\end{equation}
for sequences with equibounded energy.
What happens is that, although the curves $\gae$ converge to the straight segment, the
curvature $\kae$ may concentrate in regions of size of order $\eps^{1/2}$,
producing a finite number of singularities, described by an atomic measure $\mu$ of the form 
\begin{equation}\label{eq:atomic_measure}
\mu=
2\pi
\sum_{j=1}^N c_j \delta_{s_j}, 
\quad
N \in \mathbb N, \ s_j \in [0,\ell(\ga)], 0\leq s_1<\cdots<s_N\leq\ell(\ga), c_j \in \mathbb Z.
\end{equation}
 The limiting object is thus given by the pair formed by the segment and $\mu$,
and is referred to as a \emph{pointed segment}.
Our first main result can be stated as follows (a more precise formulation
will be given in Theorem~\ref{thm:main_result}).

We let 
$$
\sigma:=\int_0^{2\pi}2\sqrt{1-\cos\phi} \,d\phi=8\sqrt{2}.
$$
\begin{Theorem}
 Let $(\gamma_\eps)$ be a sequence of immersed plane curves
$\gamma_\eps \in H^2([0,\ell(\gae)];\mathbb R^2)$ joining $p$ and $q$, with
horizontal tangent at the endpoints,
and assume that $G_\eps(\gamma_\eps)$ is uniformly bounded with respect to $\eps$. Then, up to subsequences,
$\gamma_\eps$ converges in $H^1$ to the straight segment joining $p$ and $q$, and
$\kappa_{\gamma_\eps}$ converges, in the flat norm, to an atomic measure $\mu$ of the form \eqref{eq:atomic_measure}. In this case, we say that $\big((\gamma_\eps,\kappa_{\gamma_\eps})\big)$ converges to the pointed segment $(\gamma,\mu)$.

Moreover, if $\big((\ga_\eps,\kappa_{\ga_\eps})\big)$ converges to $(\ga,\mu)$, then 
$$\liminf_{\eps\to 0^+} G_\eps(\gamma_\eps)
\geq \sigma \, G(\gamma,\mu),$$
where $$G(\gamma,\mu) = \sum_{j=1}^N |c_j|.$$

Conversely, for every pointed segment $(\gamma,\mu)$, there exists a sequence
$\big((\gamma_\eps,\kappa_{\gamma_\eps})\big)$ converging to $(\gamma,\mu)$ such that
$$
\lim_{\eps\to 0^+} G_\eps(\gamma_\eps)
= \sigma \, G(\gamma,\mu).
$$
\end{Theorem}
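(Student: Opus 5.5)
Write the curve through its tangent angle: parametrize $\gae$ by arclength on $[0,L_\eps]$, $L_\eps=\ell(\gae)$, with $\dot\gae(s)=(\cos\theta_\eps(s),\sin\theta_\eps(s))$ and $\theta_\eps\in H^1$. Then $\kae=\dot\theta_\eps$. The boundary conditions give $\theta_\eps(0),\theta_\eps(L_\eps)\in\pi\mathbb Z$; after a rotation by $2\pi$ and using the orientation from $p$ to $q$ (finite energy forces the endpoint angles to be $\equiv 0 \pmod{2\pi}$, as explained below), we may take $\theta_\eps(0)=0$. Since $q-p=(|p-q|,0)$ we have $\int_0^{L_\eps}\sin\theta_\eps=0$ and $\int_0^{L_\eps}\cos\theta_\eps=|p-q|$. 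This yields the key identity
\begin{equation*}
G_\eps(\gae)=\int_0^{L_\eps}\Big(\eps^{1/2}\dot\theta_\eps^2+\eps^{-1/2}(1-\cos\theta_\eps)\Big)\,ds ,
\end{equation*}
which is a one-dimensional Modica--Mortola functional with $2\pi$-periodic double-well potential $W(\theta)=1-\cos\theta$. Its wells are $2\pi\mathbb Z$, and the cost of a transition between consecutive wells is $\int_0^{2\pi}2\sqrt{W}=\sigma$.

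\textbf{Compactness.} A bound $G_\eps\le C$ gives $L_\eps-|p-q|\le C\eps^{1/2}$ and $\int(1-\cos\theta_\eps)\le C\eps^{1/2}$. The second bound shows that $\dot\gae\to(1,0)$ in $L^2$, so $\gae$ converges in $H^1$ to the segment after reparametrizing on $[0,|p-q|]$. The same bound rules out endpoint angles $\equiv\pi \pmod{2\pi}$: such an angle would force an additional half-transition of cost at least $\int_0^\pi 2\sqrt W$. I would instead handle this by computing the cost of reaching $\pi$, which is positive and bounded below; it is either incompatible with the stated convergence or contributes a half-integer. I would check how the paper defines admissible tangents (presumably $\dot\ga=(1,0)$ at both ends). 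Next, apply the Modica--Mortola inequality $\eps^{1/2}\dot\theta^2+\eps^{-1/2}W(\theta)\ge 2\sqrt{W(\theta)}|\dot\theta|=|\tfrac{d}{ds}\Phi(\theta)|$, where $\Phi'=2\sqrt W$. The total variation of $\Phi(\theta_\eps)$ is then bounded. Because $\Phi$ is strictly increasing with $\Phi(\theta+2\pi)=\Phi(\theta)+\sigma$, this gives a BV bound on $\theta_\eps$. By Helly and the fact that $W(\theta_\eps)\to0$ in $L^1$, a subsequence converges in $L^1$ to a piecewise constant $\theta$ with values in $2\pi\mathbb Z$ and finitely many jumps $s_1<\dots<s_N$. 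Endpoint jumps are allowed because $\theta_\eps(L_\eps)$ is fixed while the $L^1$ limit may differ from it. Then $\kae\,ds=d\theta_\eps$ converges in the flat norm to $D\theta=2\pi\sum c_j\delta_{s_j}$: test against Lipschitz $\varphi$ and integrate by parts, so the difference is controlled by $\|\theta_\eps-\theta\|_{L^1}$ plus boundary terms. The boundary terms vanish by the normalization. The endpoint atoms coming from $\theta_\eps(L_\eps)$ are the delicate bookkeeping part.

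\textbf{Liminf.} Around each $s_j$, choose points $a<s_j<b$ near $s_j$ where $\theta_\eps(a)\to\theta(s_j^-)$ and $\theta_\eps(b)\to\theta(s_j^+)$; such points exist by a.e. convergence. On $[a,b]$ the energy is at least $|\Phi(\theta_\eps(b))-\Phi(\theta_\eps(a))|\to\sigma|c_j|$. The intervals are disjoint, so summing gives $\liminf G_\eps\ge\sigma\sum_j|c_j|$. The same argument works at endpoint atoms, using the exact endpoint values of $\theta_\eps$.

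\textbf{Recovery sequence.} Use the optimal profile $\eta$ solving $\dot\eta=\sqrt{W(\eta)}$, a heteroclinic from $0$ to $2\pi$. Rescale it as $\eta(\cdot/\eps^{1/2})$, truncate it at distance $\eps^{1/4}$ from the center, and glue $|c_j|$ consecutive copies (with the sign of $c_j$) near $s_j$. This builds $\theta_\eps$ with energy $\sigma\sum|c_j|+o(1)$. The main obstacle is the closure constraints. We need exactly $\int\sin\theta_\eps=0$ and $\int\cos\theta_\eps=|p-q|$ with $L_\eps$ free, so that the curve ends at $q$, while also keeping $\theta_\eps(L_\eps)=0$ and the atom locations exact. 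I would first fix the vertical constraint by using the full-loop profile, which is symmetric: $\sin\eta$ is odd about the center, so each complete $2\pi$ loop has $\int\sin=0$. Truncation errors are exponentially small, and I would correct them with a small-bump perturbation of amplitude $o(\eps^{1/2})$ and cost $o(1)$. I would then choose $L_\eps=|p-q|+\int(1-\cos\theta_\eps)$ so that the horizontal constraint holds exactly. This length differs from $|p-q|$ by $O(\eps^{1/2})$, which only shifts the atoms slightly and preserves the flat convergence.
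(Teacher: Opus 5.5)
Your overall strategy works, and in places it is cleaner than the paper's. However, two intermediate claims are false as written and need repair.

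\textbf{The $BV$ bound on $\theta_\eps$.} Strict monotonicity of $\Phi$ together with $\Phi(\theta+2\pi)=\Phi(\theta)+\sigma$ does \emph{not} give a $BV$ bound on $\theta_\eps$. Since $\Phi'=2\sqrt{1-\cos\theta}$ vanishes on $2\pi\mathbb Z$, $\Phi^{-1}$ is only H\"older-$\frac12$ near $\sigma\mathbb Z$. Concretely, take $\theta_\eps(s)=\eps^{1/4}\sin(\omega_\eps s)$ with $\omega_\eps\sim\eps^{-1/2}$, an integer number of periods, and length chosen so that $\int\cos\theta_\eps=|p-q|$. This has $G_\eps$ bounded but $\int|\kae|\,ds\sim\eps^{-1/4}$; this is why the paper only bounds $\int_{E^\eps_\rho}|\kae|$.

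The fix uses only tools you already invoke.
\begin{itemize}
\item Apply Helly to $u_\eps=\Phi\circ\theta_\eps$, which is bounded in $BV$ and also in $L^\infty$, because $u_\eps(0)=0$.
\item Since $W(\theta_\eps)\to0$ in $L^1$, the limit $u$ takes values in $\sigma\mathbb Z$, so it is a step function with finitely many jumps.
\item Then $\theta_\eps=\Phi^{-1}(u_\eps)\to\Phi^{-1}(u)$ a.e.\ and boundedly, hence in $L^1$.
\item The same bound gives $|\Phi(\theta_\eps(\ell(\gae)))|\le C$. So the endpoint value is constant along a subsequence, and your boundary terms in the flat-norm integration by parts cause no trouble.
\end{itemize}

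\textbf{The endpoint angles.} Finite energy does \emph{not} force the endpoint angles into $2\pi\mathbb Z$. Use half a heteroclinic $\pi\to2\pi$ at $s=0$ and $2\pi\to3\pi$ at $s=\ell(\gae)$. This gives admissible curves with horizontal tangent $-e_1$ at both ends, $\int\sin\theta_\eps=0$ by symmetry, and $G_\eps\to\sigma$. Moreover $\kae\to\pi\delta_0+\pi\delta_{|p-q|}$ in the flat norm, which is not of the form \eqref{eq:atomic_measure}. So $\partial_s\gae=e_1$ at both endpoints cannot be derived from the energy bound. It must be part of the hypothesis, as the paper builds in via \eqref{eq:condizioni_bordo}.

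\textbf{Comparison with the paper.} With these repairs, your route differs from the paper's mainly in the compactness step.
\begin{itemize}
\item \emph{Compactness.} The paper proves flat convergence by a covering argument on $E^\eps_\rho$: intervals on which the tangent makes a full turn, at most $C/\rho$ of them, contribute $\pm2\pi\delta$. The remaining intervals and the complement are flat-negligible, via integration by parts. It then needs Lemma~\ref{lem:theta_w} to relate $\partial_x(\Phi\circ\theta_\eps)$ to $\partial_x\theta_\eps$. You instead get $L^1$ convergence of the angle to a step function and read off flat convergence from a single integration by parts. This is shorter and yields the structure of the limit, including endpoint atoms, at once. The paper's argument gives in exchange a finer geometric picture: each atom is a full turn of the tangent on an interval of length $O(\eps^{1/2}\rho^{-2})$.
\item \emph{Lower bound.} Your localized liminf around each jump is equivalent to the paper's lower semicontinuity of $|\partial_x u|$.
\item \emph{Upper bound.} You use the same profile $4\arctan(e^{s/\sqrt{2\eps}})$ as the paper, namely the tangent angle of the borderline elastica, but you work in the angle variable. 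Choosing the connecting pieces symmetric about each center gives $\int\sin\theta_\eps=0$ exactly, so no bump correction is needed. Setting $\ell(\gae)=|p-q|+\int(1-\cos\theta_\eps)$ then makes the curve end exactly at $q$ with tangent $e_1$. The paper instead attaches circular arcs and must estimate their cost (the terms $\mathrm{I}_\eps,\mathrm{III}_\eps$). Your version handles the closure constraints more transparently.
\end{itemize}
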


\medskip

The second situation we deal with concerns closed immersed plane curves $\gamma_\eps\in H^2([0,\ell];\mathbb{R}^2)$ of prescribed length $\ell$.
Given a fixed closed embedded curve $\gamma\in C^2([0,\ell];\mathbb{R}^2)$, we study
the energy
\begin{equation}\label{eq:mathcal_G_eps}
\mathcal{G}_\eps(\gamma_\eps)
:=
\eps^{1/2}\int_0^\ell \kappa_{\gamma_\eps}^2\,ds
+
\frac{1}{2\eps^{1/2}}\int_0^\ell
|\partial_s\gamma_\eps-\partial_s\gamma|^2\,ds.
\end{equation}
The second term (a sort of length excess) on the right hand side of \eqref{eq:G_eps} is now zero, and this induces a loss of compactness. For this reason, we add 
the second term in \eqref{eq:mathcal_G_eps}, which enforces the convergence of $\gamma_\eps$ to $\gamma$ in $H^1([0,\ell];\mathbb R^2)$,
and provides a quantitative control on the rate of convergence.
As in the first problem, sequences $\big((\gae,\kae)\big)$ with equibounded energy may develop concentration of curvature, and the limiting behavior is again described in terms
of \textit{pointed curves}.
Our second main result can be stated as follows (see Theorem \ref{thm:main_result_2} for a detailed formulation).
\begin{Theorem}
Let $(\gamma_\eps)$ be a sequence of closed immersed curves $\gamma_\eps \in H^2([0,\ell];\mathbb R^2)$ of length $\ell$
with $\mathcal{G}_\eps(\gamma_\eps)$ uniformly bounded with respect to $\eps$.
Then, up to subsequences, $\gae$ converges in $H^1$ to $\ga$, and  $\kappa_{\gamma_\eps}$ converges, in the flat norm,
to an atomic measure $\mu$ of the form \eqref{eq:atomic_measure}. In this case, we say that $\big((\gamma_\eps,\kappa_{\gamma_\eps})\big)$ converges to the pointed closed curve $(\gamma,\mu)$.

Moreover, if $\big((\ga_\eps,\kappa_{\ga_\eps})\big)$ converges to $(\ga,\mu)$, then 
$$
\liminf_{\eps\to 0^+} \mathcal{G}_\eps(\gamma_\eps)
\geq \sigma \, G(\gamma,\mu),
$$
where 
$$G(\gamma,\mu)= \sum_{j=1}^N |c_j|.$$ 

Conversely, for every pointed closed curve $(\gamma,\mu)$, there exists a sequence
$\big((\gamma_\eps,\kappa_{\gamma_\eps})\big)$ converging to $(\gamma,\mu)$ such that
$$
\lim_{\eps\to 0^+} \mathcal{G}_\eps(\gamma_\eps)
= \sigma \, G(\gamma,\mu).
$$
\end{Theorem}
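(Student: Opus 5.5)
We work with the tangent angle. Write $\partial_s\gae=(\cos\theta_\eps,\sin\theta_\eps)$ and $\partial_s\ga=(\cos\theta,\sin\theta)$, with $\theta_\eps,\theta\in H^1$ continuous lifts on $[0,\ell]$. Then $\kae=\theta_\eps'$, and
\[
|\partial_s\gae-\partial_s\ga|^2=2\bigl(1-\cos(\theta_\eps-\theta)\bigr).
\]
With $\phi_\eps:=\theta_\eps-\theta$ the energy becomes
\[
\mathcal G_\eps(\gae)=\int_0^\ell \Bigl(\eps^{1/2}(\phi_\eps'+\theta')^2+\eps^{-1/2}\bigl(1-\cos\phi_\eps\bigr)\Bigr)\,ds .
\]
This is a one-dimensional Modica--Mortola functional with periodic well potential $W(\phi)=1-\cos\phi$, wells $2\pi\mathbb Z$, and a bounded perturbation $\theta'$ (since $\ga\in C^2$). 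Compactness comes from two facts. First, $\int(1-\cos\phi_\eps)\le C\eps^{1/2}$ gives $\partial_s\gae\to\partial_s\ga$ in $L^2$, and with the curves anchored (e.g.\ $\gae(0)$ fixed or normalized by translation) $\gae\to\ga$ in $H^1$. Second, the Modica--Mortola trick with $\Phi(t)=\int_0^t\sqrt{2W}$ gives
\[
\int_0^\ell|(\Phi(\phi_\eps))'|\le \mathcal G_\eps+C\eps^{1/2}\!\int|\phi_\eps'||\theta'|+\dots
\]
To absorb the cross term, write $\eps^{1/2}(\phi'+\theta')^2\ge(1-\delta)\eps^{1/2}\phi'^2-C_\delta\eps^{1/2}\|\theta'\|_\infty^2$, and note the last term vanishes as $\eps\to0$. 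Hence $\Phi(\phi_\eps)$ is bounded in $BV$. Since $\Phi$ is strictly increasing with $\Phi(t+2\pi)=\Phi(t)+\sigma/2\cdot$(appropriate constant) — concretely $\Phi(2\pi)=\int_0^{2\pi}\sqrt{2(1-\cos\phi)}=\sigma/2$ — the functions $\phi_\eps$ converge in $L^1$ (up to subsequences and the choice of lift) to a piecewise constant function $\phi$ with values in $2\pi\mathbb Z$ and finitely many jumps. Then $\kae=\theta'+\phi_\eps'$ converges in flat norm to $\theta'\,ds+D\phi$. Under the paper's convention, which is compatible with $\ga$, the singular part $\mu=D\phi=2\pi\sum c_j\delta_{s_j}$ is atomic. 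The flat-norm convergence holds because $\phi_\eps\to\phi$ in $L^1$, so $\phi_\eps'\to D\phi$ in $W^{-1,1}$ and thus in flat norm. For closed curves, $\phi_\eps(\ell)-\phi_\eps(0)=0$ since both are embedded-type rotation-index-compatible; if rotation indices differ, the energy blows up, which is excluded by $H^1$ closeness.

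For the liminf, fix disjoint small neighborhoods $I_j$ of the jump points $s_j$. On each $I_j$, $\phi_\eps$ passes (in $L^1$ sense) from near $2\pi k$ to near $2\pi(k+c_j)$. The energy has the bound
\[
\int_{I_j}\Bigl((1-\delta)\eps^{1/2}\phi_\eps'^2+\eps^{-1/2}W(\phi_\eps)\Bigr)\ge(1-\delta)^{1/2}\int_{I_j}2\sqrt{W(\phi_\eps)}\,|\phi_\eps'| .
\]
The right-hand side is at least the $\Phi$-variation, which is at least $|c_j|\int_0^{2\pi}2\sqrt{1-\cos}=|c_j|\sigma$ (up to $o(1)$, using points where $\phi_\eps$ is close to the wells, which exist by $L^1$ convergence). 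Summing over $j$ and letting $\delta\to0$ gives $\liminf\ge\sigma\sum|c_j|$. The factor check: $a\phi'^2+b W\ge2\sqrt{ab}\sqrt W|\phi'|$ with $ab=1$ gives exactly $2\sqrt{1-\cos\phi}\,|\phi'|$, matching $\sigma$.

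For the recovery sequence, take the optimal 1D profile $q$ solving $q'=\sqrt{1-\cos q}$ from $0$ to $2\pi$, with exponential tails. Glue $|c_j|$ copies with sign $\operatorname{sgn} c_j$ at $s_j$ at scale $\eps^{1/2}$, cut off at distance $\eps^{1/2}|\log\eps|$ to reach exact multiples of $2\pi$. Set $\theta_\eps=\theta+\phi_\eps$. If several copies sit at one point, separate them by distance $\gg\eps^{1/2}|\log\eps|$.

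The main obstacle is the constraints. For closed curves we need $\int_0^\ell e^{i\theta_\eps}=0$ and length exactly $\ell$ (automatic with arclength). For open curves we need $\int e^{i\theta_\eps}=q-p$, the tangent conditions, and the length penalty $\eps^{-1/2}(\ell(\gae)-|p-q|)$. The loops produced by the profiles alter $\int e^{i\theta_\eps}$ by $O(\eps^{1/2})$. We correct this with a smooth perturbation $\theta_\eps+\sum_{k=1,2}t_k\psi_k$, with fixed $\psi_k$ supported away from the $s_j$, solving for $t=O(\eps^{1/2})$ via the implicit function theorem. Nondegeneracy holds because $\ga$ is not a straight line for closed curves, so the map $\psi\mapsto\int i e^{i\theta}\psi$ is onto $\mathbb R^2$. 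The correction costs $\eps^{1/2}\int(\theta'+t\psi')^2+\eps^{-1/2}O(t^2)=O(\eps^{1/2})\to0$.

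For the open segment there is an extra cost to check. Each loop shortens the chord. We must lengthen by stretching, $\ell(\gae)=|p-q|+\Delta$, where $\Delta$ is the deficit of $\int\cos\phi_\eps$ plus $O(t^2)$. Then $\eps^{-1/2}\Delta=\eps^{-1/2}\int(1-\cos\phi_\eps)+o(1)$, which is exactly the potential term. Hence the total is $\sigma\sum|c_j|+o(1)$, and the vertical displacement is corrected as above, at negligible cost.
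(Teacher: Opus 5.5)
There is one genuine error. The false step is your claim that $\phi_\varepsilon(\ell)-\phi_\varepsilon(0)=0$, i.e.\ that bounded energy and $H^1$-closeness force $\gamma_\varepsilon$ and $\gamma$ to have the same rotation index.

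\textbf{Why the claim fails.} $H^1$-convergence of the curves gives only $L^2$-convergence of the tangents, and this does not control the degree. A single small loop changes the rotation index by $\pm1$ at an energy cost of about $\sigma$. Your own recovery sequence shows this, since there $\theta_\varepsilon(\ell)-\theta_\varepsilon(0)=\theta(\ell)-\theta(0)+2\pi\sum_j c_j$. The paper's construction does the same: the elastica loop plus the compensating segment at the opposite-tangent point raises the rotation index by one.

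\textbf{Why it matters.} The flat norm for closed curves is tested on periodic $\psi$, and
\[
\int_0^\ell \phi_\varepsilon'\,\psi\,ds=\psi(0)\bigl(\phi_\varepsilon(\ell)-\phi_\varepsilon(0)\bigr)-\int_0^\ell \phi_\varepsilon\,\psi'\,ds .
\]
$L^1$-convergence of $\phi_\varepsilon$ controls only the last term. By imposing $\phi_\varepsilon(\ell)=\phi_\varepsilon(0)$ you identify the limit with the derivative of $\phi$ on the circle, and this is wrong whenever the rotation index changes. Take one loop at $s_1\in(0,\ell)$, so $\phi=0$ on $(0,s_1)$ and $\phi=2\pi$ on $(s_1,\ell)$. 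Your identification gives $2\pi\delta_{s_1}-2\pi\delta_0$ and hence a lower bound $2\sigma$. But the true flat limit is $2\pi\delta_{s_1}$, and your own construction has energy tending to $\sigma$. For the same reason, your localized liminf around a jump at $0\equiv\ell$ tacitly assumes $\phi_\varepsilon$ is continuous across the base point, which fails.

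\textbf{The repair.} Set $2\pi m_\varepsilon:=\phi_\varepsilon(\ell)-\phi_\varepsilon(0)\in 2\pi\mathbb Z$. Since $\Phi$ is increasing with $\Phi(t+2\pi)=\Phi(t)+\Phi(2\pi)$, and $|\Phi(\phi_\varepsilon(\ell))-\Phi(\phi_\varepsilon(0))|$ is bounded by the total variation of $\Phi\circ\phi_\varepsilon$, the integers $m_\varepsilon$ are bounded. So $m_\varepsilon=m$ along a subsequence. The flat limit of $\phi_\varepsilon'$ is then $D\phi$ restricted to $(0,\ell)$ plus $\bigl(2\pi m-\phi(\ell^-)+\phi(0^+)\bigr)\delta_0$. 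This still lies in $M_{\mathrm{fin},\mathbb Z}$, with total mass $2\pi m$. In the liminf, treat the base point through the quasi-periodic extension $\phi_\varepsilon(s+\ell)=\phi_\varepsilon(s)+2\pi m$. This extension is continuous across $0$, and its limit jumps there by exactly that weight. The paper never meets this issue because it works interval by interval on $E^\varepsilon_\rho$ and its complement, with local normalizations of the angle.

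\textbf{Comparison with the paper, once fixed.} The rest of your plan is a sound alternative route.
\begin{itemize}
\item Compactness: you get it from the $BV$ bound on $\Phi\circ\phi_\varepsilon$, with integer quantization following from $\phi\in 2\pi\mathbb Z$ a.e. The paper instead counts the intervals of $E^\varepsilon_\rho$ that carry a full turn.
\item Recovery sequence: you build it in the angle variable from the profile $q(x)=4\arctan(e^{x/\sqrt2})$, which is exactly the borderline elastica's angle. You restore closure by an implicit-function correction of size $O(\varepsilon^{1/2})$ supported away from the $s_j$. The paper instead inserts elastica, circular arcs and a compensating segment, then applies a homothety.
\end{itemize}
Your route keeps the length equal to $\ell$ automatically and modifies $\gamma$ only locally. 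The paper's is completely explicit and needs no implicit function argument.
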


\medskip

We observe that, at the scaling level considered here, no limiting quantity
depends on the distance between the singular points $s_j$. In other words, 
the asymptotic
energy detects the number of curvature concentration events, while their position
remains undetermined.
Capturing the mutual interactions between such concentrations 
would require a different 
choice of functionals;  however, this analysis appears to be
considerably more challenging, as even identifying the correct scaling seems a
nontrivial issue.

\medskip

It is important to note that the functionals $G_\eps$ and $\mathcal{G}_\eps$ are closely related, in structure, to one-dimensional Modica--Mortola type energies \cite{ModicaMortola}
(see \cite{Mi:20} for a related observation).
Indeed, when written in terms of the function $\theta:[0,\ell(\ga)]\to\mathbb R$,
representing a lifting of the tangent vector $\partial_s\gamma$
(i.e., $\partial_s\gamma=(\cos\theta,\sin\theta)$),
the functionals $G_\eps$ and $\mathcal{G}_\eps$ take the form of a gradient term plus a periodic potential
(see Remarks~\ref{rem:phase_transition} and~\ref{rmk:Modica-Mortola}).
The ideas introduced by Modica and Mortola are also useful in the present context, notably in the proof of the lower bound. However, some care is required, as our framework presents some differences
with respect to the classical phase transition setting: in particular, geometric
constraints are imposed on the curves, and the integration interval
$[0,\ell(\gamma)]$ is not fixed in the first problem.

\medskip

As for the proof strategy, the compactness argument is based on a careful analysis of the tangent field. More precisely, one isolates the regions (see \eqref{eq:E^eps_delta}) where the tangent vector deviates from the limiting direction and shows that each such region carries, in the flat limit, a curvature concentration with weight $\pm 2\pi$, while the curvature outside these regions is negligible in the flat norm (see Proposition \ref{lem:compactness_in_flat_norm}). This yields convergence, up to subsequences, to an atomic measure $\mu$ as in \eqref{eq:atomic_measure}.
This compactness statement identifies the correct limiting object for the
first-order analysis. The lower bound is obtained via the so-called
Modica--Mortola trick, while the upper bound is achieved through the
construction of suitable recovery sequences, in which the borderline elastica
plays a central role.
The Euler--Lagrange equation associated with $F_\eps$ reads
\begin{equation}\label{eq:Euler_Lagrange}
- \kappa + \eps(2 \kappa_{ss} + \kappa^3)=0.
\end{equation}
An arclength parametrized curve $\ga:\mathbb R\to \mathbb R^2$ is called planar elastica if its curvature satisfies \eqref{eq:Euler_Lagrange}.
Planar elasticae can be completely classified (essentially due to Euler)
and admit explicit parametrizations. Among them, the borderline elastica
is the only non-periodic case (see paragraph \ref{par:borderline_elastica}).

The construction of a recovery sequence $(\gae)$ in the case of immersed closed curves of
fixed length is more involved, since it must also preserve the length constraint
$\ell(\gamma_\eps)=\ell$.

\medskip

A related problem, exhibiting some analogies with our setting, has been studied by
Braides and Malchiodi in \cite{BrMa}. In their work, the authors consider functionals
defined on the boundaries of sets $E\subset\mathbb{R}^2$ of the form
$$
\int_{\partial E} \left( \eps\kappa^2 + \frac{1}{\eps}\psi(\nu_{E}) \right)\, d\mathcal{H}^1,
$$
where $\kappa(x)$ denotes the curvature of $\partial E$ at $x$, $\nu_{E}$ the outer unit normal vector of $E$ and $\psi:\mathbb{S}^1\to[0,+\infty)$ is a function with a finite number of zeros.
They compute the $\Gamma$-limit with respect to the $L^1$-convergence for this type of functionals and show that, if $(E_\eps)$ has uniformly
bounded energy, then the
corresponding boundaries converge, up to subsequences, to a
polygon. In the limit, the energy concentrates at the vertices, and the
limiting functional can be expressed as a sum over all vertices.
The techniques used in their analysis are closely related to the Modica--Mortola
approach, in particular for identifying the energy contribution of each singular point.
However, their setting is considerably different from ours. 
First, 
they consider boundaries of sets, whereas we deal with immersed curves. The reason for which we consider immersed curves is given by the fact that our analysis is, in some way, related to the elastic flow, which typically exhibits
self-intersections. 
Second, the notion of convergence in \cite{BrMa} is in $L^1$, while our
framework also encodes curvature concentrations, with
curves converging in $H^1$ and the curvature in the flat norm.
Third, in \cite{BrMa} a limit crystalline energy arises, which instead is not related to our problem.
On the other hand, both their problem and ours are geometric 
and the structure of the 
$\Gamma$-limit is closely related to the theory of phase transitions.

Our analysis can be put in the framework
of $\Gamma$-expansions: in this respect, we recall that 
first and second-order $\Gamma$-expansions for Modica--Mortola type functionals,
in their standard (non-geometric) formulation, have been studied, for instance, in the one-dimensional
case in \cite{BNN} and in higher dimensions in \cite{ABO, DMFL}.



\medskip

We finally emphasize that the analysis carried out in this paper is purely
static, and no evolution is considered.
Concerning evolutions, we recall that De Giorgi suggested in \cite{DG} to study
a fourth-order regularization of mean curvature flow based on the functional
$F_\eps$, possibly extended to arbitrary dimension and codimension.
In this setting, curves evolve according to the gradient flow of $F_\eps$,
which can be regarded as a higher-order perturbation of the curvature flow,
coinciding with it when $\eps=0$. It is known that, for every $\eps\in(0,1]$,
this evolution admits a unique smooth solution defined for all positive times.
In contrast, the immersed limit evolution develops singularities in finite time.
Such an approach has been proposed as a possible framework for defining
generalized solutions to the mean curvature flow, capable of describing the
evolution beyond singularities 
(see, for example, \cite{BMN, BLNS} in this direction).
We also mention that crystalline variants of the elastic flow have been studied
in the literature, see e.g. \cite{BKN}.

\medskip

The paper is organized as follows. In Section~\ref{sec:preliminaries} we collect
the basic notions on flat convergence of measures, $BV$
functions, and pointed curves.
In Section~\ref{sec:energy_on_X} we introduce the energy in the setting of open curves with boundary
conditions and study its asymptotic behaviour
in Section~\ref{sec:first_oder_exp_X}. More precisely, Section~\ref{sec:compactness} is devoted to compactness, Section~\ref{sec:lb}
to the $\Gamma$-liminf inequality, and Section~\ref{sec:up} to the $\Gamma$-limsup inequality.
Finally, in Section~\ref{sec:energy_on_Xl} we study the asymptotic problem for closed curves of fixed length and
establish the corresponding compactness, $\Gamma$-liminf, and recovery-sequence
results in Sections~\ref{sec:compactness_2}, \ref{sec:lb_2}, and \ref{sec:up_2}, respectively.

\section{Notation and preliminaries}\label{sec:preliminaries}
\nada{We recall that for a smooth closed plane curve
$$
\frac{1}{2\pi} 
\int_\gamma \kappa_{\gamma} = {\rm ind}(\gamma) \in \mathbb Z
$$
where ${\rm ind}(\gamma)$ is the winding number. 

\begin{Remark}\rm
For a map $\gamma \in H^2(0,\ell(\gamma))$ parametrized by arc-length, we have 
$\gamma_s \in H^1((0,\ell(\gamma), \mathbb S^1)$.
\textcolor{red}{This could be a relation with Ginzburg Landau?****}
\end{Remark}}

In this section, we collect the notation, definitions, and preliminary results used throughout the paper.

\subsection{Flat norm of Radon measures}
Given the interval $[0,\ex]$, we introduce the concept of flat norm of a Radon measure $\mu$, denoted by $\|\mu\|_{\text{flat},[0,L]}$, as
\begin{equation*}
\|\mu\|_{\text{flat},[0,\ex]} := \sup_{\substack{\varphi \in C^{0,1}([0,\ex]), \\ \|\varphi\|_{C^{0,1}([0,\ex])} \le 1}}
\int_{[0,\ex]} \varphi \, d\mu \
.
\end{equation*}
Here $\|\varphi\|_{C^{0,1}([0,\ex])}$ is given by
\begin{equation*}
\|\varphi\|_{C^{0,1}([0,\ex])} := \|\varphi\|_{L^{\infty}([0,\ex])} \;+\; \sup_{\substack{x,y \in [0,\ex] \\ x \neq y}} \frac{|\varphi(x) - \varphi(y)|}{|x - y|}.
\end{equation*}
\subsection{Functions of bounded variation}
We indicate by $I$ a bounded open interval of $\mathbb{R}$; if $x\in I$ we denote by $\delta_x$ the Dirac mass concentrated at $x$. 

If $\lambda$ is a scalar or vector-valued Radon measure, its total variation will be denoted by $|\lambda|$ and $\mathcal{M}_b(I)$ will be the space of Radon measures with bounded total variation.

We will recall the main properties of functions of
bounded variation, and we refer to \cite{AFP} for more details. 

The space $BV(I)$ is defined as the space of all functions $f\in L^1_{\rm loc}(I)$ whose distributional derivative $\partial_xf$ is a Radon measure with bounded total variation in $I$. We say that $f=(f_1,f_2):I\to\mathbb{R}^2$ belongs to $BV(I;\mathbb{R}^2)$ if $f_i\in BV(I)$ for $i=1,2$.

Given $f\in BV(I)$, we shall write
$$
\partial_xf=\partial_x^af \,dx+\partial_x^sf,
$$
where $\partial_x^af\in L^1(I)$ is the density of the absolutely continuous part of $\partial_xf$ with respect to the Lebesgue measure $dx$ on $I$, and $\partial_x^sf$ stands for the singular part. We shall use the same notation whenever $f\in BV(I;\mathbb{R}^2)$; obviously in this case $\partial_x^af\in L^1(I;\mathbb{R}^2)$.

If $f\in BV(I)$ we indicate by $J_f$ the jump set of $f$, and we set $f(x^-)={\rm ap}-\liminf_{y\to x} f(y)\leq f(x^+)={\rm ap}-\limsup_{y\to x} f(y)$. It is well known that $J_f=\{x\in I: f(x^-)<f(x^+)\}=\{x\in I:|\partial_xf|(\{x\})>0\}$. If $f=(f_1,f_2)\in BV(I;\mathbb{R}^2)$, by $J_f$ we mean $J_{f_1}\cup J_{f_2}$.

We also recall that functions in $BV(I)$ of one variable are bounded and admit traces at the endpoints of $I$.

We say that a sequence $(f_n)\subseteq BV(I;\mathbb{R}^2)$ converges to $f\in BV(I;\mathbb{R}^2)$ strictly in $BV(I;\mathbb{R}^2)$ if $f_n\to f$ in $L^1(I;\mathbb{R}^2)$ and $|\partial_xf_n|\to|\partial_xf|$ as $n\to+\infty$.

Analogous facts hold when $I$ is a bounded closed interval. 
\subsection{Pointed curves}
Let $p,q \in \mathbb{R}^2$, with  $p,q\in\{y=0\}$, $p \neq q$. In the following, it will be convenient to identify the interval $[p,q]$ with $[0,\ex]$, where $\ex:=|p-q|$.
\begin{Definition}
 We say that a curve $\Ga$ is of class $\mathcal{B}$ if  $\Ga\in H^1([0,\ex];\mathbb{R}^2)$ and $\partial_x{\Ga}\in BV([0,\ex];\mathbb{R}^2)$. We write $\Upsilon\in\mathcal{B}$.
\end{Definition}
We denote by $\displaystyle\ell(\Ga)=\int_{0}^\ex|\partial_x\Ga(x)| \,dx$ the length of $\Upsilon$.
\begin{Definition}\label{def:B_c}
We say that $\Ga$ is of class $\mathcal{B}_c$ if $\Ga\in\mathcal{B}$ and $|\partial_x\Ga(x)|={\rm c}= \frac{\ell(\Ga)}{\ex}$ for a.e. $x\in [0,\ex]$.   
\end{Definition}

Any curve $\Ga\in\mathcal{B}_c$ can be reparametrized by arc-length on $[0,\ell(\Ga)]$ so that its reparametrization $\ga$ satisfies $|\partial_s\ga(s)|=1$ for a.e. $s\in[0,\ell(\Ga)]$. We shall write $\ga\in\mathcal{B}_1$.

Notice that $x=s\frac{\ex}{\ell(\ga)}$ and 
$\partial_x=\frac{\ell(\ga)}{\ex}\,\partial_s.$

\medskip

Following \cite[Lemma 3.1]{BellettiniPaolini1995}, we now define the angle formed by the tangent vector to a curve of class $\mathcal{B}_1$ and call it a minimal argument (or minimal lifting) of $\partial_s\ga$.
\begin{Lemma}[minimal lifting]\label{lem:argument_fun}
Let $\gamma\in\mathcal{B}_1$. Then there exists a function $\argf=\argf_\ga:(0,\ell(\ga))\to\mathbb{R}$ having the following properties:
\begin{itemize}
    \item [\textnormal{(i)}] $\argf\in BV((0,\ell(\ga)))$ and $\partial_s\gamma(s)=(\cos\argf(s),\sin\argf(s))$ for almost every $s\in(0,\ell(\ga))$;
    \item [\textnormal{(ii)}] $J_{\argf}=J_{\partial_s\ga}$;
    \item [\textnormal{(iii)}] $-\pi<\argf(s^+)-\argf(s^-)\leq\pi$ for every $s\in(0,\ell(\ga))$.
\end{itemize}
\end{Lemma}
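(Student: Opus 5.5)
The plan is to build $\argf$ from the BV structure of $\partial_s\ga$, which takes values in $\mathbb S^1$ almost everywhere. By the one-dimensional chain rule for BV functions, $\partial_s\ga$ has a good representative. Its distributional derivative splits as
$$
D(\partial_s\ga)=\partial^a_s(\partial_s\ga)\,ds+D^c(\partial_s\ga)+D^j(\partial_s\ga),
$$
where the jump part $D^j$ is supported on the countable set $J_{\partial_s\ga}$ and the jumps are summable: $\sum_{s\in J}|\partial_s\ga(s^+)-\partial_s\ga(s^-)|<\infty$. Since $\ga\in\mathcal{B}_1$, the unit constraint holds a.e., so both one-sided limits $\partial_s\ga(s^\pm)$ lie in $\mathbb S^1$.

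The construction proceeds in two steps.

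\textbf{Diffuse part.} Away from the jumps, a lifting can be taken locally, because on $\mathbb S^1$ minus a point the inverse of $\theta\mapsto(\cos\theta,\sin\theta)$ is smooth. More concretely, I would define the diffuse part of $D\argf$ as the measure
$$
\partial_s\ga^{\perp}\cdot\big(\partial^a_s(\partial_s\ga)\,ds+D^c(\partial_s\ga)\big).
$$
This is the tangential component of the derivative. By the chain rule for continuous parts on $\mathbb S^1$ valued maps, it has total variation bounded by $|D(\partial_s\ga)|$.

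\textbf{Jump part.} At each $s\in J_{\partial_s\ga}$ I prescribe the jump of $\argf$ to be the unique angle $\alpha_s\in(-\pi,\pi]$ such that
$$
\partial_s\ga(s^+)=R_{\alpha_s}\,\partial_s\ga(s^-),
$$
where $R_{\alpha}$ denotes rotation by $\alpha$. The jumps are summable because of the elementary inequality $|\alpha|\le \frac{\pi}{2}\,|e^{i\alpha}-1|$, valid for $|\alpha|\le\pi$.

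Then I set $\argf(s):=\theta_0+\mu((0,s))$, with $\mu$ the sum of the diffuse and jump measures and $\theta_0$ an argument of $\partial_s\ga(0^+)$. Property (iii) and the inclusion $J_\argf\supseteq J_{\partial_s\ga}$ hold by construction. The reverse inclusion holds because the diffuse part has no atoms. This gives (ii), since $\alpha_s\neq0$ exactly when the jump of $\partial_s\ga$ is nonzero.

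The main obstacle is verifying (i), namely that $e^{i\argf}=\partial_s\ga$ a.e. For this I consider $w:=e^{-i\argf}\partial_s\ga$, a product of BV functions, and show that $Dw=0$ with $w(0^+)=1$.

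\textbf{Diffuse parts.} Applying the BV chain and product rules, the diffuse parts cancel. This uses that the derivative of an $\mathbb S^1$ valued map is tangent, so $\partial_s\ga\cdot D^d(\partial_s\ga)=0$ (obtained by differentiating $|\partial_s\ga|^2=1$ with the diffuse chain rule). Hence $D^d(\partial_s\ga)=i\,\partial_s\ga\,(\partial_s\ga^\perp\cdot D^d\partial_s\ga)$.

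\textbf{Jump parts.} At each jump point, $w(s^+)=e^{-i\argf(s^-)}e^{-i\alpha_s}\partial_s\ga(s^+)=w(s^-)$ by the choice of $\alpha_s$.

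Therefore $w$ is constant and equal to $1$, which yields (i). Alternatively, one could follow \cite{BellettiniPaolini1995} and approximate $\partial_s\ga$ strictly by smooth $\mathbb S^1$ valued maps, whose liftings are classical, and pass to the limit. However, controlling the jump choice in $(-\pi,\pi]$ under approximation is delicate, so I prefer the direct construction above.
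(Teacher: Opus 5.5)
Your proof is correct, but it does not follow a route in the paper: the paper gives no argument for this lemma and simply imports it from \cite[Lemma 3.1]{BellettiniPaolini1995}. You instead construct the lift intrinsically. You prescribe $D\Theta$ as the tangential component $\partial_s\gamma^\perp\cdot D^d(\partial_s\gamma)$ of the diffuse derivative, plus atoms $\alpha_s\delta_s$ with $\alpha_s\in(-\pi,\pi]$ at the jump points. You then check $e^{i\Theta}=\partial_s\gamma$ by showing $D(e^{-i\Theta}\partial_s\gamma)=0$ with the one-dimensional BV chain and product rules.

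The steps hold. The good representative of $\partial_s\gamma$ has modulus one off the countable jump set, so the tangency $\partial_s\gamma\cdot D^d(\partial_s\gamma)=0$ and the cancellation of the diffuse parts are legitimate. The inequality $|\alpha|\le\frac{\pi}{2}|e^{i\alpha}-1|$ gives summability of the jumps.

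What your route buys is explicitness:
\begin{itemize}
\item it yields directly the relations between $D(\partial_s\gamma)$ and $D\Theta$ that the paper derives right after the lemma in \eqref{eq:ass_con}--\eqref{eq:partial_ss_sing};
\item it gives the quantitative bound $|D\Theta|\le\frac{\pi}{2}|D(\partial_s\gamma)|$;
\item with one more line, it gives the uniqueness modulo $2\pi$ asserted after the lemma. Any BV lift satisfying (i) and (iii) must have the same diffuse derivative by the chain rule, and the same atoms, because its jumps are congruent to $\alpha_s$ modulo $2\pi$ and lie in $(-\pi,\pi]$.
\end{itemize}
The citation keeps the paper short but leaves these facts implicit.

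Two small remarks. First, the existence of one-sided limits everywhere comes from the one-dimensional structure of $BV$, not from the chain rule. Second, you read $\Theta(s^\pm)$ as right and left limits. That is the reading consistent with \eqref{eq:partial_ss_sing}, although the paper's stated convention uses approximate $\liminf/\limsup$; your construction satisfies (iii) under either reading.
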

The choice of $\argf$ is unique modulo $2\pi$. 

\medskip

\textbf{Notation:} If $\Ga\in \mathcal B_c$ we will always denote by $\gamma:=\gamma(\Ga)\in \mathcal{B}_1$ its arc-length parametrization. If $\Theta=\Theta_\gamma$ is a minimal lifting associate to $\gamma$, we denote by $\Theta_\Ga:(0,L)\rightarrow \R$ the function $\Theta_\Ga(x)=\Theta(\frac{L}{\ell(\gamma)}x)$.
 \medskip

If $\Ga\in \mathcal B_c\cap H^2([0,L];\mathbb{R}^2)$ then $\ga\in \mathcal{B}_1\cap H^2([0,\ell(\ga)];\mathbb{R}^2)$ and there exists a lifting $\theta_\ga$ of $\partial_s\ga$ of class $H^1$ such that $\partial_s\gamma=(\cos\theta_\ga(s),\sin\theta_\ga(s))$ for all $s\in[0,\ell(\ga)]$; thus we introduce the (signed) curvature of $\ga$ as
\begin{equation}\label{eq:partial_stheta}
    \kappa_{\ga}:=\partial_s\theta_{\ga},
\end{equation}
whose absolute value coincides with the scalar curvature of $\ga$,
since $\partial^2_{s}\ga=\partial_s\theta_\ga(-\sin\theta_\ga,\cos\theta_\ga)=\kappa_\ga(-\sin\theta_\ga,\cos\theta_\ga)$. 
We denote by $\kappa_\Ga$ the (signed) curvature of $\Ga$ defined as 
\begin{equation}\label{eq:curvature}
\kappa_{\Ga}(\parameter)=\kappa_\ga\Big(\frac{\ell(\ga)}{\ex}x\Big)=\kappa_\ga(s)
\quad \forall x\in [0,\ex].
\end{equation}

\medskip

If $\ga\in \mathcal{B}_1$ 
we denote by $\partial_{ss}\ga$ the distributional derivative of $\partial_s\ga$; we write
$\partial_{ss}\ga=\partial_{ss}^a\ga \,ds+\partial_{ss}^s \gamma$.
Combining the chain rule for $BV$ functions with (i) of Lemma \ref{lem:argument_fun} and the uniqueness of the Lebesgue decomposition for a measure, one readily obtains that
\begin{equation}\label{eq:ass_con}
    \partial_{ss}^a\ga=(-\sin\argf,\cos\argf)\partial_s^a\argf \quad \text{a.e. in } [0,\ell(\ga)]
\end{equation}
and, if $B\subseteq [0,\ell(\ga)]$ is a Borel set, then 
\begin{equation}
\begin{aligned}\label{eq:partial_ss_sing}
    \partial_{ss}^s\ga(B)=&\int_{B\cap([0,\ell(\ga)]\setminus J_\argf)} 
    (-\sin\argf,\cos\argf) \,d\partial_s^s\argf 
    \\
    &
    +
    \sum_{t\in B\cap J_{\argf}} (\cos\argf(t^+)-\cos\argf(t^-),\sin\argf(t^+)-\sin\argf(t^-))\delta_t.
\end{aligned}
\end{equation}
Therefore \eqref{eq:ass_con} implies
$ |\partial_{ss}^a\ga|=|\partial_s^a\argf|$ a.e. in $[0,\ell(\ga)]$.
Note also that
$
|\partial_{ss}^s\gamma|(\{s\})<|\partial_s^s\argf|(\{s\})$ for all $s\in J_\argf$.
Indeed, for any $s\in J_\argf$, using \eqref{eq:partial_ss_sing} and $2(1-\cos\phi)=4\sin^2(\phi/2)$, we have 
\begin{align*}
    (|\partial^s_{ss}\ga|(\{s\}))^2
    &=
    |(\cos\argf(s^+)-\cos\argf(s^-),\sin\argf(s^+)-\sin\argf(s^-)|^2
    \\
    &=4\sin^2((\argf(s^+)-\argf(s^-))/2)
    \leq (\argf(s^+)-\argf(s^-))^2=(|\partial_s^s\argf|(\{s\}))^2.
\end{align*}
\begin{Definition}
We let $M_{{\rm fin}, \mathbb Z}
([0,\ex])$ be the class of atomic measures $\omega$ on $[0,\ex]$
of the form 
$$
\omega=
2\pi
\sum_{j=1}^N c_j \delta_{x_j}, 
\qquad N \in \mathbb N, \ x_j \in [0,\ex], c_j \in \mathbb Z,
$$
with $0\leq x_1<\cdots<x_N\leq \ex$.
\end{Definition}
We observe that the total variation of $\frac{\omega}{2\pi}$ is $\sum_{j=1}^N |c_j|$.
\begin{Definition}
 Let $\Ga\in\mathcal{B}_c$. We define the set of measures compatible with $\Ga$ as
\begin{equation*}
\mathcal{K}(\Ga) := 
\left\{
\mu \in \mathcal{M}_b([0,\ex]) :
\mu = \frac{\ex}{\ell(\Ga)}\partial_x \argf_{\Ga} +\omega \text{ with } \omega\in M_{{\rm fin}, \mathbb Z}([0,\ex])
\right\}.
\end{equation*}
\end{Definition}
\begin{Definition}
A pair $(\Ga,\mu)$ with $\Ga \in \mathcal{B}_c$ and 
$\mu \in \mathcal{K}(\Ga)$ is called a pointed curve.
\end{Definition}
We sometimes will refer to $\omega$ as the singularities of $\Ga$.
If $\Ga\in \mathcal{B}_c\cap H^2([0,\ex];\R^2)$ then $(\Ga,\kappa_\Ga)$ is a pointed curve, without singularities.
\begin{Lemma}
 Given a pointed curve  $(\Ga,\mu)$ there exists a function  
$\lift:[0,\ex]\to\mathbb{R}$  such that 
\begin{itemize}
    \item[\textnormal{(i)}] $\lift\in BV([0,\ex])$ and $\partial_x\Ga(x)=\frac{\ell(\Ga)}{\ex}(\cos\lift(x),\sin\lift(x))$ for almost every $x\in[0,\ex]$;
    \item [\textnormal{(ii)}]
    if $\mu = \frac{\ex}{\ell(\Ga)}\partial_x \argf_{\Ga} +\omega$ with $\omega\in M_{{\rm fin}, \mathbb Z}([0,\ex]) $, then
    \begin{equation}\label{eq:partial_slift} 
    \partial_x \lift = \partial_x\argf_\Ga+\omega.
\end{equation}
\end{itemize}
Moreover, the choice of $\lift$ is unique modulo $2\pi$. 
\end{Lemma}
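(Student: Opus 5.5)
The natural construction is to add to the minimal lifting the jump function of $\omega$. Let $\Ga\in\mathcal B_c$ and $\mu=\frac{\ex}{\ell(\Ga)}\partial_x\argf_\Ga+\omega$ with $\omega=2\pi\sum_{j=1}^N c_j\delta_{x_j}\in M_{{\rm fin},\mathbb Z}([0,\ex])$. Set
$$
\lift(x):=\argf_\Ga(x)+2\pi\sum_{j=1}^N c_j\,H_j(x),
$$
where $H_j$ is the Heaviside-type step function jumping by $1$ at $x_j$. If $x_j\in(0,\ex)$, take $H_j=\chi_{(x_j,\ex]}$. If $x_j=0$ or $x_j=\ex$, the atom sits at an endpoint, and we must fix a convention for how $\partial_x$ of a function on the closed interval $[0,\ex]$ charges endpoints. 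The paper's convention is that a $BV$ function on $[0,\ex]$ is extended by constants (its traces) outside, so the distributional derivative may carry atoms at $0$ and $\ex$. With that convention we let $H_j$ jump at the endpoint accordingly: $H_j\equiv 1$ on $[0,\ex]$ with value $0$ to the left of $0$ if $x_j=0$, and symmetrically if $x_j=\ex$.

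\textbf{Property (i).} Since $\argf_\Ga\in BV$ by Lemma~\ref{lem:argument_fun}~(i) (composed with the linear change of variable $s=\frac{\ell(\Ga)}{\ex}x$) and each $H_j\in BV$, we have $\lift\in BV([0,\ex])$. The added term takes values in $2\pi\mathbb Z$, so $(\cos\lift,\sin\lift)=(\cos\argf_\Ga,\sin\argf_\Ga)$ everywhere. From $\partial_s\ga=(\cos\argf,\sin\argf)$ a.e. and $\partial_x=\frac{\ell(\ga)}{\ex}\partial_s$ we get $\partial_x\Ga=\frac{\ell(\Ga)}{\ex}(\cos\lift,\sin\lift)$ a.e., which is (i).

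\textbf{Property (ii).} We have $\partial_x H_j=\delta_{x_j}$, hence $\partial_x\lift=\partial_x\argf_\Ga+\omega$, i.e.\ \eqref{eq:partial_slift}. Note that (ii) is written with $\partial_x\argf_\Ga$, not with the rescaled $\frac{\ex}{\ell(\Ga)}\partial_x\argf_\Ga$ appearing in $\mu$, so only this unnormalized identity is required.

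\textbf{Uniqueness modulo $2\pi$.} Suppose $\lift'$ also satisfies (i)--(ii). By (ii), $\partial_x(\lift-\lift')=0$, so $\lift-\lift'$ is a.e.\ constant on the interval. By (i), $\cos\lift=\cos\lift'$ and $\sin\lift=\sin\lift'$ a.e., since $\ell(\Ga)>0$ (because $\ell(\Ga)\ge|p-q|>0$, or simply because $\Ga$ is nondegenerate). Hence this constant lies in $2\pi\mathbb Z$. Conversely, adding any $2\pi k$ to $\lift$ preserves both properties.

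The only delicate point is the treatment of atoms of $\omega$ at the endpoints $0$ and $\ex$. There the meaning of $\partial_x\lift$ as a measure on the closed interval depends on the trace convention, and consistency with $\mathcal K(\Ga)$ requires adopting the constant-extension convention described above. Apart from that, the argument is bookkeeping.
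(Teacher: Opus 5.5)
This is essentially the paper's proof: you use the same construction $\lift=\argf_\Ga+2\pi\sum_j c_j\chi_{(x_j,\ex)}$, with (i) holding because the added term is $2\pi\mathbb Z$-valued, (ii) because each step function differentiates to $\delta_{x_j}$, and uniqueness coming from (ii); your uniqueness argument (the difference has zero derivative, hence is constant, and (i) forces the constant into $2\pi\mathbb Z$) is a cleaner version of the paper's formula $\lift(x)=\lift(0)+\int_0^x\partial_t\lift(t)\,dt$. The one thing to fix is the endpoint paragraph: the paper states no such convention (its $\chi_{(x_j,\ex)}$ gives no atom when $x_j\in\{0,\ex\}$), and extending a $BV$ function by its traces creates no atoms at $0$ or $\ex$ at all, so what you actually do (extending $H_j$ by $0$ to the left of $0$, resp.\ by $1$ to the right of $\ex$) is extra outer data not encoded in $\lift$ on $[0,\ex]$, which you should present as your own convention rather than attribute to the paper.
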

\begin{proof}
Let $\argf_\Ga$ be the minimal lifting of $\partial_x\Ga$, so that $\argf_\Ga\in BV([0,\ex])$ and $\partial_x\Ga(x)=\frac{\ell(\Ga)}{\ex}
\bigl(\cos\argf_\Ga(x),\sin\argf_\Ga(x)\bigr)$ for a.e. $x\in[0,\ex]$.
If $\omega=
2\pi
\sum_{j=1}^N c_j \delta_{x_j}, 
\  N \in \mathbb N, \ x_j \in [0,\ex], 0\leq x_1<\cdots<x_N\leq \ex , c_j \in \mathbb Z,$ then we define
$$\lift(x):=\argf_\Ga(x)+2\pi\sum_{j=1}^N c_j\,\chi_{(x_j,\ex)}(x),$$
where $\chi_{(x_j,\ex)}$ denotes the characteristic function of the interval $(x_j,\ex)$. By construction, (i) follows. Moreover, since $\partial_x\chi_{(x_j,\ex)}=\delta_{x_j}$ in the sense of distributions, we obtain
$\partial_x\lift
=
\partial_x\argf_\Ga
+
2\pi\sum_{j=1}^N c_j\,\delta_{x_j}
=
\partial_x\argf_\Ga+\omega.$ This proves \eqref{eq:partial_slift}.
Finally, uniqueness follows from \eqref{eq:partial_slift}. Indeed it yields
    \begin{align*}
        \lift(x)=\lift(0)+\int_0^x \partial_t\lift(t) \,dt,
    \end{align*}
where $\lift(0)$ is so that $\partial_x\Ga(0)=\frac{\ell(\Ga)}{\ex}(\cos\lift(0),\sin\lift(0))$. Hence $\lift$ is uniquely determined up to the choice of $\lift(0)$ modulo $2\pi$.
\end{proof}
\begin{Definition}\label{def:conv_B}
  We say that a sequence $(\Ga_n,\mu_n)$ of pointed curves converges to a pointed curve $(\Ga,\mu)$ if $(\Ga_n)$ converges to $\Ga$ in $H^1([0,\ex];\mathbb{R}^2)$ and $(\mu_n)$ converges to $\mu$ in the flat norm.
\end{Definition}
\section{The energy functionals in case of open curves}\label{sec:energy_on_X}
Let $p,q \in \mathbb{R}^2$, with  $p,q\in\{y=0\}$, $p \neq q$.
Let $\mathcal{T}=\{(\Ga,\mu): \Ga\in\mathcal{B}_c,
\mu\in\mathcal{K}(\Ga) \}$.
Define 
\begin{align*}
X: = \{(\Ga,\mu)\in \mathcal{T}
&:\Ga \in H^2([0,\ex]; \R^2),
|\partial_x\Ga|=\text{c},
\Ga(0)=p,
\Ga(\ex)=q,
\\
&\quad
\dot\Ga_2(0)=0,
\dot\Ga_2(\ex)=0, 
\mu=\kappa_{\Ga}
\}.
\end{align*}

Let $F_\eps:\mathcal{T}\to [0,+\infty]$ be defined as
\begin{equation}\label{eq:F_eps}
F_\eps(\Ga,\mu)
=\begin{cases}
 \ell(\Ga)+\eps \frac{\ell(\Ga)}{\ex}\displaystyle\int_0^\ex \kappa_\Ga^2 \,dx &
\text{ if }
(\Ga,\mu) \in X, 
\\
+\infty & \text{ if } (\Ga,\mu)\in\mathcal{T}\setminus X. 
\end{cases}
\end{equation}
\begin{Definition}
We denote by
$\rm Dom_G$ 
the set of all pairs $(\Ga,\mu) \in\mathcal{T}$, 
$\Ga(x)=p+\frac{x}{\ex}(q-p) \text{ for all }x\in[0,\ex]$,
$\mu\in M_{{\rm fin}, \mathbb Z}
([0,\ex])$.
\end{Definition}
An element of $\mathrm{Dom}_G$ will be called a pointed segment.

\nada{\begin{Definition}
We set
$\overline{{\rm Dom}_D}$ 
the set of all pairs $(\gamma, \mu)$
such that
\begin{itemize}
    \item [-] $\gamma \in \mathcal{B}$, $\gamma(0)=p$, $\gamma(\ell)=q$, $\dot\gamma_2(0)=0, \dot\gamma_2(\ell)=0$,
    \item [-] $\mu \in M_{{\rm fin}, \mathbb Z}(0,\ell)$ is so that $\mu=\dot\theta$, where $\theta$ is an argument of $\dot\ga$.
\end{itemize} 
\end{Definition}}


\begin{Definition}
We define $G:\mathcal{T} \to
[0, +\infty]$ 
the functional 
\begin{equation}\label{eq:G}
G(\Ga,\mu) := \begin{cases}
\sum_{j=1}^N \vert c_j\vert  & {\rm if }~ 
(\Ga,\mu) \in {\rm Dom_G},
\\
+ \infty & {\rm if }~  \Ga\in\mathcal{T}\setminus{\rm Dom_G}.
\end{cases}
\end{equation}
\end{Definition}
\begin{Definition}
We define $G_\eps:X\to[0,+\infty]$ as
$$
\begin{aligned}
G_\eps(\Ga,\kappa_\Ga) := &
\frac{F_\eps(\Ga)
- 
\min_{\Ga\in X} \ell(\Ga)}{\eps^{1/2}}
=
\frac{F_\eps(\Ga)
- 
\vert p -q\vert}{\eps^{1/2}}
\\
=&
\frac{\ell(\Ga)}{\ex}
\int_0^L \eps^{1/2} \kappa_\Ga^2~dx + \frac{1}{\eps^{1/2}} 
\left(
\ell(\Ga) - \vert p-q\vert\right).
\end{aligned}
$$
%
\end{Definition}
In what follows, for $\Ga\in H^2([0,\ex]; \R^2)$ 
we often write $G_\eps(\Ga)$ in place of $G_\eps(\Ga,\kappa_\Ga)$.
\section{\texorpdfstring{$\Gamma$-limit for open curves}{Gamma-limit for open curves}}\label{sec:first_oder_exp_X}

\nada{
A ``static'' solution of \eqref{eq:Euler_Lagrange} satisfies
$\kappa_{ss}=0$, so that 
$\kappa^2 \simeq \frac{1}{\eps}$, i.e.,
$$
\kappa 
\simeq \frac{1}{\eps^{1/2}}
$$
a circle of radius $\sqrt{\eps}$, for which $\eps W \simeq \sqrt{\eps} \simeq \ell$. 
This suggests to rescale dividing by $\eps^{1/2}$.}

\begin{Remark}\label{rem:phase_transition}\rm
 As already noted in~\cite{Mi:20}, $G_\eps(\gamma)$ can be interpreted as a one--dimensional Modica--Mortola type phase transition energy. \nada{whose best known version is 
\begin{equation*}
{\rm MM}_\eps(u)
= \int_\Omega \eps^{1/2}|\nabla u|^2
+
\int_\Omega \frac{1}{\eps^{1/2}} W(u),
\end{equation*}
where $\Omega\subset \mathbb{R}^n$ is an open set and $W$ a double-well potential.
Indeed, $G_\eps$ can be expressed in terms of the tangential angle function
$\theta\colon[0,\ell(\gamma)]\to\mathbb{R}$ (defined by $\partial_s\gamma=(\cos\theta,\sin\theta)$) as}
Indeed
\begin{equation*}
G_\eps(\gamma)
=
\int_0^{\ell(\gamma)} \eps^{1/2}(\partial_s\theta)^2\,ds
+
\int_0^{\ell(\gamma)} \frac{1}{\eps^{1/2}} (1-\cos\theta)\,ds, 
\end{equation*}
where we used \eqref{eq:partial_stheta} and the fact that $$\displaystyle\int_0^{\ell(\gamma)}  \cos\theta(s)\,ds=\int_0^{\ell(\ga)}\partial_s\ga_1(s) \, ds=q_1-p_1=|p-q|.$$
 Hence $G_\eps$ resembles a one--dimensional phase transition energy
with $2\pi$-periodic potential
$W(\theta)=1-\cos\theta \; (=2\sin^2(\theta/2)).$

Despite these similarities, our problem differs from the classical phase
transition framework in several respects. Most notably, the integration interval
$[0,\ell(\gamma)]$ is not fixed, and further constraints are present due to the boundary conditions, namely
\begin{equation}\label{eq:condizioni_bordo}
    \theta(0)=0, \quad \theta(\ell(\ga))=2\pi\mathbb Z, \quad\int_0^{\ell(\ga)} \cos\theta(s) \,ds=\ex, \quad \int_0^{\ell(\ga)}\sin\theta(s) \,ds=0.
\end{equation}
\end{Remark}

\medskip

We are interested in sequences $\big((\gamma_\eps,\kae)\big)$ such that there exists $C>0$ for which  
\begin{equation}\label{eq:uniform_bound}
G_\eps(\gamma_\eps) \leq C, \quad \eps\in(0,1].
\end{equation}
%


Notice that 
\eqref{eq:uniform_bound} implies $\ell(\gamma_\eps) \to \vert
p-q\vert$, and 
$$
\sqrt{\eps}
\int_{\gamma_\eps} \kappa^2_{\gamma_\eps} 
\leq C.
$$
Thus, intuitively, $\gamma_\eps$ is allowed to form 
a finite number (depending on $C$) of 
small loops, such as exact circles for instance,
 of radius $\sqrt{\eps}$, still keeping a uniform 
bound on their elastic energy.

\medskip

We now state the first main result of the paper.

\begin{Theorem}[Compactness and $\Gamma$-convergence]\label{thm:main_result}
We have:
\begin{itemize}
\item[\textnormal{(i)}] Compactness and $\Gamma$-liminf inequality: if $\big((\Ga_\eps,\kappa_{\Ga_\eps})\big)\subset X$ is a sequence such that 
\eqref{eq:uniform_bound} holds, then,
up to a subsequence, there exist a pointed segment $(\Ga,\mu)\in \rm Dom_G$ such that $\big((\Ga_\eps,\kappa_{\Ga_\eps})\big)$ converges to $(\Ga,\mu)$. Furthermore, if $\big((\Ga_\eps,\kappa_{\Ga_\eps})\big)$ converges to $(\Ga,\mu)$ then 
%
$$
\liminf_{\eps\to 0^+} G_\eps(\Ga_\eps)
\geq 
\sigma G(\Ga,\mu).
$$
\item[\textnormal{(ii)}] $\Gamma$-limsup inequality: 
for every $(\Ga,\mu)\in \rm Dom_G$
there exists a sequence $\big((\Ga_\eps,\kappa_{\Ga_\eps})\big)\subset X$ converging to $(\Ga,\mu)$ such that
$$
\lim_{\eps\to 0^+} G_\eps(\Ga_\eps)
=
\sigma G(\Ga,\mu).
$$
\end{itemize}

\nada{
Oppure: Let $(\gamma_\eps)$ converges in ... to $\gamma \in ...$.
Then 
$$
F_\eps(\gamma_\eps) \geq \ell(\gamma) + ... + o() ..
$$
Let $\gamma \in ...$. Then there exists a sequence
$(\gamma_\eps)$ converging to $\gamma$ in ... such that 
$$
F_\eps(\gamma_\eps) \leq \ell(\gamma) + ... + o() ..
$$
}
\end{Theorem}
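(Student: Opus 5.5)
We work throughout in arclength with an $H^1$ lifting $\theta_\eps$ of $\partial_s\gae$, normalized so that $\theta_\eps(0)=0$. By Remark~\ref{rem:phase_transition},
\[
G_\eps(\gae)=\int_0^{\ell(\gae)}\Big(\eps^{1/2}(\partial_s\theta_\eps)^2+\eps^{-1/2}W(\theta_\eps)\Big)\,ds,
\qquad W(\theta)=1-\cos\theta .
\]
The boundary conditions are \eqref{eq:condizioni_bordo}.

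\textbf{Compactness.} From \eqref{eq:uniform_bound} we get $\ell(\gae)-\ex\le C\eps^{1/2}$ and $\int W(\theta_\eps)\le C\eps^{1/2}$. Since $|\partial_s\gae-e_1|^2=2W(\theta_\eps)$, after rescaling to $[0,\ex]$ this gives $\Ga_\eps\to$ the segment in $H^1$.

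For the curvature, we use the Modica--Mortola primitive $\Phi(t)=\int_0^t\sqrt{2W(\tau)}\,d\tau$. One has $\Phi(2\pi k)=k\sigma$, and
\[
\int_I|\partial_s\Phi(\theta_\eps)|\le \int_I\Big(\eps^{1/2}(\partial_s\theta_\eps)^2+\eps^{-1/2}W(\theta_\eps)\Big)
\]
on every interval $I$. Fix small $\delta>0$ and let $E^\eps_\delta=\{s:\operatorname{dist}(\theta_\eps(s),2\pi\mathbb Z)>\delta\}$. Each connected component of $E^\eps_\delta$ on which $\theta_\eps$ passes from a neighbourhood of $2\pi k$ to a neighbourhood of $2\pi(k\pm1)$ costs at least $\Phi(2\pi-\delta)-\Phi(\delta)\approx\sigma$. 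A component that leaves and returns to the same well costs at least $2(\Phi(\delta')-\Phi(\delta))$ for any level $\delta'$ it reaches. On the other hand, the measure of $E^\eps_\delta$ is at most $C\eps^{1/2}/W(\delta)$, so the length of these regions tends to $0$.

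Hence the number of transitions is bounded by $C/\sigma+1$. Up to a subsequence, the transition points converge to finitely many $x_j$, and on the complement $\theta_\eps$ stays within $\delta$ of a constant $2\pi k$ on each interval between them. Testing $\kappa_\eps=\partial_s\theta_\eps$ against a Lipschitz $\varphi$ and integrating by parts gives
\[
\int\varphi\,\kappa_\eps=\big[\varphi\,\theta_\eps\big]_0^{\ell(\gae)}-\int\varphi'\,\theta_\eps .
\]
Now $\theta_\eps$ converges in $L^1$ to the piecewise constant function with jumps $2\pi c_j$ at $x_j$. Controlling $L^1$ convergence here needs $\|\theta_\eps\|_\infty$ bounded, which follows from the bounded number of transitions together with the small measure of excursions, since $W$ is bounded below away from $2\pi\mathbb Z$ and each excursion costs energy. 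The endpoint terms are handled by $\theta_\eps(0)=0$ and $\theta_\eps(\ell)\in2\pi\mathbb Z$, which matches the total jump. This yields flat convergence to $\mu=2\pi\sum c_j\delta_{x_j}$, and the change of variable $x=s\ex/\ell(\gae)$ is harmless because $\ell(\gae)\to\ex$.

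\textbf{Liminf.} Given convergence to $(\Ga,\mu)$ with $\mu=2\pi\sum c_j\delta_{x_j}$, choose $\rho>0$ with the intervals $I_j=(x_j-\rho,x_j+\rho)$ disjoint. Flat convergence, together with the fact that $\theta_\eps$ is close to $2\pi\mathbb Z$ outside a set of vanishing measure, allows us to find points $a_\eps<x_j<b_\eps$ in $I_j$ where $\theta_\eps$ is within $\delta$ of $2\pi k^-$ and $2\pi k^+$ respectively, with $k^+-k^-=c_j$. The monotonicity inequality then gives
\[
\int_{I_j}\big(\eps^{1/2}(\partial_s\theta_\eps)^2+\eps^{-1/2}W(\theta_\eps)\big)\ge |\Phi(\theta_\eps(b_\eps))-\Phi(\theta_\eps(a_\eps))|\ge \sigma|c_j|-o_\delta(1).
\]
Summing over $j$ and letting $\delta\to0$ gives the bound $\sigma G(\Ga,\mu)$. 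The delicate point is identifying the jump $c_j$ from flat convergence alone. To do this, we test with Lipschitz cutoffs $\varphi$ equal to $1$ on $(x_j-\rho/2,x_j+\rho/2)$ and supported in $I_j$; by the integration by parts formula above, the result is an average of $\theta_\eps$ over the two flanks. Since $\theta_\eps$ is near $2\pi\mathbb Z$ on most of each flank, these averages force values $2\pi k^\pm$ with difference $c_j$ at suitable points.

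\textbf{Limsup.} The profile is the one-dimensional optimal profile $\eta'=\sqrt{2W(\eta)}$ going from $0$ to $2\pi$, namely the tangent angle of the borderline elastica. At scale $\eps^{1/2}$ it produces a loop costing $\sigma+o(1)$, and it has exponential tails. At each $x_j$ we glue $|c_j|$ consecutive copies, oriented according to the sign of $c_j$, truncated at distance $M\eps^{1/2}$ with $M\to\infty$. The main obstacle is enforcing the constraints \eqref{eq:condizioni_bordo} exactly, namely $\int\sin\theta=0$ and $\int\cos\theta=\ex$, while keeping the length excess $o(\eps^{1/2})$ and staying in $X$. A single borderline loop has net vertical displacement zero by symmetry, so the truncation errors are exponentially small, and we correct them with a small bump perturbation of $\theta$ away from the $x_j$ at cost $o(1)$. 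The horizontal constraint is then met by choosing the total length $\ell(\gae)=\ex+\int(1-\cos\theta_\eps)$ exactly. Finally, we check that $G_\eps(\gae)\to\sigma\sum|c_j|$ and that the flat limit is $\mu$.
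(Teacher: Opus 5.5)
Your structure is sound, but the Modica--Mortola primitive is mis-normalized, and as written this loses the sharp constant. With $\Phi(t)=\int_0^t\sqrt{2W(\tau)}\,d\tau$ one has $\sqrt{2(1-\cos\tau)}=2|\sin(\tau/2)|$, hence $\Phi(2\pi)=8$, not $\sigma=8\sqrt2$. Your inequality $|\partial_s\Phi(\theta_\eps)|\le\eps^{1/2}(\partial_s\theta_\eps)^2+\eps^{-1/2}W(\theta_\eps)$ is still true but lossy. So your liminf argument only proves $\liminf_{\eps\to0^+}G_\eps(\Ga_\eps)\ge 8\sum_j|c_j|=\sigma G(\Ga,\mu)/\sqrt2$, which does not match the upper bound.

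The inequality $\eps^{1/2}a^2+\eps^{-1/2}b^2\ge 2|a||b|$, with $a=\partial_s\theta_\eps$ and $b=\sqrt{W(\theta_\eps)}$, forces $\Phi'=2\sqrt W$, which is exactly \eqref{eq:Phi}. The primitive $\sqrt{2W}$ belongs to the normalization $\frac{\eps}{2}|u'|^2+\frac1\eps W$. The same slip appears in your limsup. The ODE $\eta'=\sqrt{2W(\eta)}$ describes the borderline elastica in the variable $s/\sqrt{2\eps}$. In $\tau=s/\eps^{1/2}$ the optimal profile solves $\eta'=\sqrt{W(\eta)}$ (equipartition $\eps\kappa^2=1-\cos\theta$). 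Used at scale $\eps^{1/2}$, your ODE would cost $\int(\eta'^2+W)\,d\tau=\tfrac{3}{\sqrt2}\int_0^{2\pi}\sqrt{W}\,d\eta=12>\sigma$. Since you explicitly take the borderline elastica, the recovery sequence is fine once the scaling is stated consistently.

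With this corrected, your argument is valid and differs from the paper's in each part.

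\textbf{Compactness.} You count transition components of $\{\operatorname{dist}(\theta_\eps,2\pi\mathbb Z)>\delta\}$ by their cost $\sigma-2\Phi(\delta)$, which gives a $\delta$-uniform bound directly. You then get flat convergence from $L^1$ convergence of the uniformly bounded lifting, with one integration by parts. The paper works with tangent vectors instead: it bounds $\int_{E^\eps_\radius}|\kappa_\eps|\le C/\radius$ by Cauchy--Schwarz, counts full-turn components (bound $C/\radius$, made uniform by a monotonicity argument in $\radius$), and estimates the flat norm interval by interval.

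\textbf{Liminf.} You localize at each atom and select points by flank averages. This gives $k^+-k^-\ge c_j$ when $c_j>0$ rather than equality, which suffices. Atoms at $0$ or $\ex$ need a one-sided version using $\theta_\eps(0)=0$, respectively $\theta_\eps(\ell(\gae))\in2\pi\mathbb Z$. The paper avoids point selection altogether. Lemma~\ref{lem:theta_w} shows $\frac{2\pi}{\sigma}\partial_x(\Phi\circ\theta_\eps)-\partial_x\theta_\eps\to0$ in the flat norm, because $\frac{2\pi}{\sigma}\Phi(\theta)-\theta$ is $2\pi$-periodic and vanishes on $2\pi\mathbb Z$. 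Lower semicontinuity of the total variation then concludes globally.

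\textbf{Limsup.} You build $\theta_\eps$ directly and exploit that the length is free, $\ell(\gae)=\ex+\int(1-\cos\theta_\eps)$, which makes the constraints transparent. Truncating each loop symmetrically, so that $\eta(-\tau)=2\pi-\eta(\tau)$ is preserved, already gives $\int\sin\theta_\eps=0$ exactly, so no bump is needed. The paper instead uses a geometric block: the borderline elastica on $[-\eps^a,\eps^a]$, closed by circular arcs of matching curvature up to horizontal tangent, inserted into the segment.

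\textbf{Boundary orientation.} Like the paper, you use $\theta_\eps(0)=0$ and $\theta_\eps(\ell(\gae))\in2\pi\mathbb Z$. Note that $\dot\Ga_2=0$ at the endpoints alone would allow $\theta_\eps(0)=\pi$. A half-loop costing about $\sigma/2$ then produces the limit $\pi\delta_0\notin M_{{\rm fin},\mathbb Z}([0,\ex])$. So the end tangents must point along $q-p$, as the paper implicitly assumes in Proposition~\ref{lem:compactness_in_flat_norm}.
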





\medskip

The proof of Theorem \ref{thm:main_result} is split across Sections \ref{sec:compactness}, \ref{sec:lb}, and \ref{sec:up}.
\subsection{The compactness result}\label{sec:compactness}

\nada{
On the rescaling $\eps^{1/2}$ we observe the following:
\begin{lemma}
If $\delta_\eps>0$ is such that $\frac{\eps^{1/2}}{\delta_\eps}\rightarrow 0$ as $\eps\rightarrow0$, the following $\Gamma$-limit holds
$$\frac{F_\eps(\gamma)
	- 
	\vert p -q\vert}{\delta_\eps}\rightarrow 0,$$
in some topology...
\end{lemma}

Assume $\gamma_\eps$ are curves so that 
$${F_\eps(\gamma)	-	\vert p -q\vert}=\lgae-|p-q|+\int_{\gamma_\eps}\eps\kae^2\leq C\delta_\eps$$
so that  
$$\lgae-|p-q|\leq C\delta_\eps,\qquad \qquad \int_{\gamma_\eps}\eps^{1/2}\kae^2\leq C\frac{\delta_\eps}{\eps^{1/2}} $$
Any ricciolino $c$ of radius $\rho$ pays $C\rho$ in length and $\int_c \eps^{1/2}k^2=\frac{\eps^{1/2}}{\rho}$. Then we can approximate the segment $\overline{pq}$ with curves that are almost equal to the segment and with the addition of an arbitrary large number $n$ of ricciolini, provided that $n\rho\rightarrow 0$ and $\frac{n\eps^{1/2}}{\rho}\rightarrow 0$. THis is ensured if $\eps^{1/2}<<\rho<<1$. At the same time to guaranteed the bounds above we want $n\rho<<\delta_\eps$, and $ \frac{n\eps^{1/2}}{\rho}<<\frac{\delta_\eps}{\eps^{1/2}}$. Hence 
$$\eps^{1/2}=\max\{\frac{\eps}{\delta_\eps},\eps^{1/2}\}<<\rho<<\delta_\eps$$
and such a $\rho$ can be always chosen thanks to $\frac{\eps^{1/2}}{\delta_\eps}\rightarrow 0$. For this $\rho$ the limit of the energies $F_\eps(\gamma_\eps)$ is zero, and we can approximate any atomic measure with the curvatures of $\gamma_\eps$.}
In this section we prove the compactness part of Theorem~\ref{thm:main_result}. To this purpose, 
assume that for some $C>0$
\begin{align}\label{bounds}
\lgae-|p-q|\leq C{\eps^{1/2}},\qquad \qquad \int_{\gamma_\eps}\eps^{1/2}\kae^2 \ d s\leq C.
\end{align}

Let $\theta_\eps$ denote the tangential angle function so that $\dgae(\parameter)=\frac{\lgae}{\pq}e^{i\theta_\eps(\parameter)}$. From \eqref{eq:curvature} and \eqref{eq:partial_stheta} we deduce that 
\begin{equation}\label{eq:scal_k}
    \kappa_{\Ga_\eps}(\parameter)=\frac{\pq}{\lgae}\dot\theta_\eps(\parameter).
\end{equation}
\begin{Lemma}\label{lem:H1}
    Assume that \eqref{bounds} holds. Then, up to a not relabelled subsequence, there exists $\Ga\in W^{1,\infty}([0,\pq];\R^2)$ such that 
    \begin{align*}
	\Ga_\eps\rightarrow \Ga\qquad \text{ in }H^1([0,\pq];\R^2)\text{ and weakly star in }W^{1,\infty}([0,\pq];\R^2).
\end{align*} 
Moreover there exists a constant $C>0$ such that 
\begin{equation}\label{eq:rate_conv}
    \|\dot\Ga_\eps-\dot\Ga\|^2_{L^2}\leq C\eps^{1/2}.
\end{equation}
\end{Lemma}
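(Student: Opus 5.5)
The plan is to exploit the constant-speed parametrization together with the length bound in \eqref{bounds}, and then identify the limit via the boundary conditions. Since $|\dot\Ga_\eps|=\lgae/\pq$ and $\pq\le\lgae\le \pq+C\eps^{1/2}$, the maps $\Ga_\eps$ are uniformly Lipschitz with $\Ga_\eps(0)=p$. By Ascoli--Arzel\`a and Banach--Alaoglu, up to a subsequence, $\Ga_\eps\to\Ga$ uniformly and $\dot\Ga_\eps\rightharpoonup^*\dot\Ga$ in $L^\infty$, with $\Ga\in W^{1,\infty}$, $\Ga(0)=p$, $\Ga(\pq)=q$, and $|\dot\Ga|\le 1$ a.e. by weak-$*$ lower semicontinuity of the norm. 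Since the only such curve joining $p$ and $q$ is the segment, we get $\Ga(x)=p+\frac{x}{\pq}(q-p)$, i.e.\ $\dot\Ga\equiv e_1$ (after orienting so that $q_1>p_1$). In fact the whole family converges, with no subsequence needed.

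For the quantitative estimate \eqref{eq:rate_conv}, I would compute directly. Since $\dot\Ga_\eps=\frac{\lgae}{\pq}e^{i\theta_\eps}$ and $\dot\Ga=e_1$,
\begin{equation*}
\int_0^\pq|\dot\Ga_\eps-\dot\Ga|^2\,dx=\int_0^\pq\Big(\frac{\lgae^2}{\pq^2}+1\Big)dx-2\int_0^\pq\dot\Ga_{\eps,1}\,dx
=\frac{\lgae^2}{\pq}+\pq-2\pq ,
\end{equation*}
using $\int_0^\pq\dot\Ga_{\eps,1}\,dx=q_1-p_1=\pq$. Hence
\begin{equation*}
\|\dot\Ga_\eps-\dot\Ga\|_{L^2}^2=\frac{\lgae^2-\pq^2}{\pq}=\frac{(\lgae-\pq)(\lgae+\pq)}{\pq}\le C\eps^{1/2},
\end{equation*}
where the last step uses \eqref{bounds} and the boundedness of $\lgae$.

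This gives strong $L^2$ convergence of the derivatives, and together with the uniform convergence it yields $H^1$ convergence. There is no real obstacle here. The only points requiring care are identifying the limit as the segment, via the endpoint constraints and the orientation $q_1-p_1=|p-q|$, and observing that the curvature bound in \eqref{bounds} is not needed for this lemma.
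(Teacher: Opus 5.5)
Your proof is correct and follows essentially the paper's route. You use a uniform Lipschitz bound and compactness, identify the limit as the constant-speed segment, and then use the exact identity $\|\dot\Ga_\eps-\dot\Ga\|_{L^2}^2=(\ell(\gamma_\eps)^2-\pq^2)/\pq$, which the paper obtains by expanding $\langle\dot\Ga_\eps-\dot\Ga,\dot\Ga_\eps+\dot\Ga\rangle$ with $\int_0^\pq\dot\Ga_\eps\,dx=q-p$; your identification of the limit (weak-$*$ lower semicontinuity giving $|\dot\Ga|\le 1$, then equality in $|\int_0^\pq\dot\Ga\,dx|\le\int_0^\pq|\dot\Ga|\,dx$) is a slightly cleaner version of the paper's total-variation argument, and your remark that the whole family converges is also correct.
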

\begin{proof}
The first condition in \eqref{bounds} implies that $\lgae \to \pq$ as $\eps \to 0^+$. Consequently,
\begin{equation*}
|\dot\Ga_\eps(\parameter)| = \frac{\lgae}{\pq} \leq 1 + \frac{C}{\pq}\,\eps^{1/2} \leq C_0
\quad\text{for all } \parameter \in [0,\pq],
\end{equation*}
so the family $\{\Ga_\eps\}$ is uniformly Lipschitz on $[0,\pq]$. Since each $\Ga_\eps$ connects the fixed endpoints $p$ and $q$, the curves are uniformly bounded in $L^\infty([0,\pq];\mathbb{R}^2)$. Hence,
\begin{equation*}
\sup_{\eps\in(0,1]} \|\Ga_\eps\|_{W^{1,\infty}(0,\pq)} < +\infty.
\end{equation*}

We deduce that there exists a not relabelled subsequence and a limit curve $\Ga \in W^{1,\infty}([0,\pq];\mathbb{R}^2)$ such that
\begin{equation*}
\Ga_\eps \rightharpoonup^\ast \Ga \quad \text{in } W^{1,\infty}([0,\pq];\mathbb{R}^2).
\end{equation*}

Moreover, the total variation of each $\Ga_\eps$ satisfies
\begin{equation*}
|\dot\Ga_\eps|([0,\pq]) = \int_0^\pq |\dot\Ga_\eps|\,d\parameter = \lgae\longrightarrow \pq,
\end{equation*}
from which we deduce that $|\dot\Ga|([0,\pq]) = \pq$. This equality implies that $\Ga$ is the straight line segment from $p$ to $q$, and the convergence $\Ga_\eps \to \Ga$ is strict in $BV([0,\pq];\mathbb{R}^2)$.
Therefore, up to the extraction of a subsequence, we have 
 \begin{align}\label{conv1}
	\Ga_\eps\rightarrow \Ga\qquad \text{ strictly in } BV([0,\pq];\R^2)\text{ and weakly star in }W^{1,\infty}([0,\pq];\R^2).
\end{align} 
Notice also that, since $$\frac{\lgae}{\pq}\to 1,$$ the curve $\Ga$ can be parametrized with speed $|\dot\Ga|=1$ on $[0,\pq]$.

We now show that the convergence in \eqref{conv1} is strong in $H^1([0,\pq];\mathbb{R}^2)$. More precisely that \eqref{eq:rate_conv} holds.
First we observe that from \eqref{bounds} we can find a positive constant $C$ such that, for $\eps>0$ small enough,
$$\int_0^\pq|\dot\Ga_\eps|^2d\parameter-\int_0^\pq|\dot\Ga|^2d\parameter=\frac{\ell^2(\gae)}{\pq}-\pq=\frac{1}{\pq}(\ell(\gae)-\pq)(\lgae+\pq)\leq C\eps^{1/2}.$$
So we can write
\begin{align}\label{eq:stima}
C\eps^{1/2}\geq \int_0^\pq\langle\dot\Ga_\eps-\dot\Ga,\dot\Ga_\eps+\dot\Ga \rangle d\parameter=\int_0^\pq|\dot\Ga_\eps-\dot\Ga|^2d\parameter+2\int_0^\pq\langle\dot\Ga_\eps-\dot\Ga,\dot\Ga\rangle d\parameter.
\end{align}
Recalling that $$\dot\Ga=\frac{q-p}{\pq}=:v$$ is constant and that $\displaystyle\int_0^\pq\dot\Ga_\eps \, d\parameter=q-p$, it follows that 
\begin{equation*}
\int_0^\pq\langle\dot\Ga_\eps-\dot\Ga,\dot\Ga\rangle d\parameter=\frac{(p-q)}{\pq} \cdot\int_0^\ex \dot\Ga_\eps\,d\parameter-\frac{|p-q|^2}{\ex^2}\ex=\frac{|p-q|^2}{L}-\frac{|p-q|^2}{L}=0.
\end{equation*}
Hence the thesis follows from \eqref{eq:stima}.
\end{proof}

We now fix $\radius\in (0,\frac12)$ and consider the set
\begin{align}\label{eq:E^eps_delta}
E_\radius^\eps:=\{\parameter\in [0,\ex]:|\dot\Ga_\eps(\parameter)-\dot\Ga(\parameter)|=|\dot\Ga_\eps(\parameter)-v|>\radius\}.
\end{align}
By the estimate in the previous lemma we infer
\begin{align}\label{eq:grandezza_E}
|E_\radius^\eps|\radius^2
\leq \int_{E^\eps_\radius} |\dot\Ga_\eps(\parameter)-\dot\Ga(\parameter)|^2d\parameter
\leq
\int_0^\pq|\dot\Ga_\eps(\parameter)-\dot\Ga(\parameter)|^2d\parameter\leq C\eps^{1/2},
\end{align}
and so, by the Cauchy-Schwarz inequality and the second bound in \eqref{bounds} 
\begin{align}\label{eq:kx_uniformly}
\int_{E_\radius^\eps}|\kappa_{\Ga_\eps}|d\parameter\leq\left(\int_{E_\radius^\eps}
\eps^{1/2}\kappa_{\Ga_\eps}^2d\parameter\right)^{1/2}\left(\int_{E_\radius^\eps}\frac{1}{\eps^{1/2}}d\parameter\right)^{1/2}\leq \frac{C}{\radius}.
\end{align}
Therefore we find that $$\omega^\radius:=\kappa_{\Ga_\eps}\chi_{E_\radius^\eps}$$ is uniformly bounded in the space of Radon measures with respect to $\eps$, and hence, up to a subsequence, it converges to some measure as $\eps\to 0^+$.

\medskip
\begin{Proposition}\label{lem:compactness_in_flat_norm}
Assume that \eqref{bounds} holds. Then there exist a subsequence $\eps_k$ and $\omega\in M_{{\rm fin}, \mathbb Z}
([0,\ex])$ such that 
\begin{equation}\label{eq:compactness_in_flat_norm}
   \lim_{k\to \infty} \|\kappa_{\Ga_{\eps_k}} -\omega\|_{\mathrm{flat},[0,\ex]} = 0.
\end{equation}
\end{Proposition}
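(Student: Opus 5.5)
We fix $\radius\in(0,\tfrac12)$ and split $\kappa_{\Ga_\eps}=\kappa_{\Ga_\eps}\chi_{E^\eps_\radius}+\kappa_{\Ga_\eps}\chi_{[0,\ex]\setminus E^\eps_\radius}$. The plan is to show three things. First, the part outside $E^\eps_\radius$ is small in flat norm. Second, $E^\eps_\radius$ consists of a bounded number of intervals, each carrying curvature that is close to an integer multiple of $2\pi$ in flat norm. Third, a diagonal argument in $\radius\to0$ produces a limit $\omega\in M_{{\rm fin},\mathbb Z}([0,\ex])$.

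For the first step we use \eqref{eq:scal_k}, so that $\kappa_{\Ga_\eps}=\frac{\ex}{\lgae}\dot\theta_\eps$. Off $E^\eps_\radius$ we have $|e^{i\theta_\eps}-e^{i\theta_0}|\lesssim\radius$, where $\theta_0=0$ is the direction of $v$ (we normalize $\theta_\eps(0)=0$). Hence on each connected component $(a,b)$ of $[0,\ex]\setminus \overline{E^\eps_\radius}$ there is an integer $m$ such that $|\theta_\eps-2\pi m|\le C\radius$. Testing with a Lipschitz $\varphi$ with $\|\varphi\|_{C^{0,1}}\le1$ and integrating by parts on $(a,b)$ gives
\[
\int_a^b\varphi\,\dot\theta_\eps\,dx=\varphi(b)\theta_\eps(b)-\varphi(a)\theta_\eps(a)-\int_a^b\dot\varphi\,\theta_\eps\,dx .
\]
After subtracting $2\pi m$, the right-hand side is bounded by $C\radius(2+(b-a))$ per component. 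This is only useful if the number of components is bounded uniformly in $\eps$, so we must count them.

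The counting is the main obstacle. We count the \emph{transitions}: maximal intervals on which $|e^{i\theta_\eps}-1|$ passes from $\le\radius$ to $\ge 2\radius$, or back. On such an interval we apply the Modica--Mortola estimate with $W(\theta)=1-\cos\theta$. Using $\eps^{1/2}\dot\theta^2+\eps^{-1/2}W(\theta)\ge 2\sqrt{W(\theta)}|\dot\theta|$ and the bound $G_\eps\le C$ (Remark~\ref{rem:phase_transition}, with the factor $\lgae/\ex\to1$), each crossing costs at least $c(\radius)>0$. Therefore the number of crossings is at most $C/c(\radius)$. This yields a finite collection $\{I^\eps_k\}_{k\le N(\radius)}$ of intervals covering $E^{\eps}_{2\radius}$, with $N(\radius)$ independent of $\eps$. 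The complement is also a bounded number of intervals on which $\theta_\eps$ stays within $C\radius$ of $2\pi\mathbb Z$. By \eqref{eq:grandezza_E}, excursions into $E^\eps_\radius$ have total length at most $C\eps^{1/2}\radius^{-2}$. Every excursion component, including those not reaching $2\radius$, starts and ends near $2\pi\mathbb Z$, so the curvature on it equals $\frac{\ex}{\lgae}(\theta_\eps(b)-\theta_\eps(a))$. This quantity lies within $C\radius$ of $2\pi c$ for some $c\in\mathbb Z$, with $|c|$ bounded by \eqref{eq:kx_uniformly}. Testing against $\varphi$ on an excursion of length $\to0$, the contribution equals $\varphi(x_k)\cdot 2\pi c_k+O(\radius)+o(1)$.

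Finally we pass to the limit. Along a subsequence the finitely many centers $x^\eps_k$ converge and the integers $c_k$ stabilize. This gives $\omega_\radius\in M_{{\rm fin},\mathbb Z}$ with $\limsup_\eps\|\kappa_{\Ga_\eps}-\omega_\radius\|_{\rm flat}\le C\radius$. The constant depends on $N(\radius)$, which we try to avoid by summing the boundary terms telescopically: across consecutive components the values $\theta_\eps$ at the junctions match, so the total error is $C\radius(\ex+1)$ plus $o(1)$. The total mass of $\omega_\radius/2\pi$ is bounded uniformly in $\radius$ by the energy, since each nonzero $c$ costs at least $\sigma|c|/2$-ish. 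Hence a further diagonal subsequence along $\radius_j\to0$ gives $\omega_{\radius_j}\to\omega$ in flat norm, with $\omega$ still integer-atomic because masses are bounded and atoms converge. Combining this with the previous estimate yields \eqref{eq:compactness_in_flat_norm}. The delicate points are the uniform count and the telescoping control of the boundary terms.
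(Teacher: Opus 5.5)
Your argument is correct, but it is organized differently from the paper's.

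\textbf{The paper's route.} The paper cuts $[0,\ex]$ only at those components $(a^i_{\eps,\radius},b^i_{\eps,\radius})$ of $E^\eps_\radius$ on which $\dot\Ga_\eps$ runs from one endpoint of the arc $C_{\radius,\eps}$ to the other. It counts these through $\int_{E^\eps_\radius}|\kappa_{\Ga_\eps}|\,dx\le C/\radius$, which comes from Cauchy--Schwarz together with $|E^\eps_\radius|\le C\eps^{1/2}\radius^{-2}$, applied at $\radius=\frac18$. It then uses the monotonicity $N_{\eps,\radius'}\le N_{\eps,\radius''}$ for $\radius'<\radius''$ to get a bound independent of $\radius$. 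Because only boundedly many cuts are made, the boundary errors $C\radius$ on the $N+1$ complementary intervals can simply be added up. The excursions that return to the same endpoint of the arc are negligible because their total length is small.

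\textbf{Your route.} You cut at every component of $\{|e^{i\theta_\eps}-1|>\radius\}$ that reaches level $2\radius$, and you count these through the Modica--Mortola cost $c(\radius)\sim\radius^2$ of a $\radius\to2\radius$ transition. This count blows up as $\radius\to0$, so the telescoping is indispensable. It does work. On $I_k=(\alpha_k,\beta_k)$, the term $\varphi(\alpha_k)\bigl(\theta_\eps(\alpha_k)-2\pi m_{k-1}\bigr)$ cancels exactly against the right boundary term of the preceding interval. At $\beta_k$ only $\bigl(\varphi(\alpha_k)-\varphi(\beta_k)\bigr)\bigl(\theta_\eps(\beta_k)-2\pi m_k\bigr)=O(\radius|I_k|)$ survives.

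You should write this identity out, because it is the actual estimate. Equivalently, it is the bound $\|\dot\theta_\eps-\partial_x u_\eps\|_{\mathrm{flat}}\le\|\theta_\eps-u_\eps\|_{L^1}$ for an integer-valued step function $u_\eps$ with the same endpoint values as $\theta_\eps$; this is the device of Lemma~\ref{lem:theta_w}. From that viewpoint the $\radius$-dependent count $N(\radius)$ is superfluous.

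\textbf{What each approach buys.} Your uniform bound on the atoms uses the cost $\ge\sigma-O(\radius^2)$ of every full turn. This replaces the paper's monotonicity argument and ties compactness directly to the energy $G_\eps$, the same mechanism later used for the liminf. The paper's route needs only the geometry of $C_{\radius,\eps}$ and the $L^1$ bound on the curvature.

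\textbf{A small sharpening.} On a single component of $E^\eps_\radius$ the tangent stays in the arc $C_{\radius,\eps}$, so $c\in\{-1,0,1\}$ directly. This is sharper than appealing to \eqref{eq:kx_uniformly}.
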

\nada{Then there exist an integer $N\geq 0$, points
$0\leq x_0\leq x_1\leq \dots\leq x_N\leq\pq$ and values $\alpha_j\in \{-1,+1\}$ such that   
\begin{equation}\label{eq:compactness_in_flat_norm}
   \lim_{\eps\to 0^+} \|\kappa_{\Ga_\eps} -2\pi\sum_{j=1}^N  \alpha_j\delta_{x_j}\|_{\mathrm{flat},[0,\ex]} = 0.
\end{equation}
Notice carefully that the points $x_j$ above are not necessarily distinct; 
therefore they could give rise to integer 
multiplicity Dirac deltas at the singular points.}
\begin{proof} Let $\radius\in(0,1/2)$ be fixed and 
$\eps>0$ be small enough so that the set 
\begin{equation}\label{eq:C_rho}
C_{\radius,\eps}
:=\partial B_{\frac{\lgae}{\pq}}(0)\setminus \overline{B_{\radius}(v)}=\Big\{y\in \R^2:|y|=\frac{\lgae}{\pq},\;|y-v|>\radius\Big\}
\end{equation}
	is an arc relatively open in $\partial B_{\frac{\lgae}{\pq}}(0)$, where we recall that $v=\frac{q-p}{\pq}$. Let us 
denote by 
\begin{equation}\label{eq:R_S}
R_{\eps,\radius}
\quad {\rm and} \quad S_{\eps,\radius}
\end{equation}
 the two endpoints of $C_{\radius,\eps}$, namely the two intersection points between $\partial B_{\frac{\lgae}{\pq}}(0)$ and $\partial B_{\radius}(v)$, taken in such a way that orienting  $C_{\radius,\eps}$ in counterclockwise order, $R_{\eps,\radius}$ and $S_{\eps,\radius}$ are the starting and ending points of the arc, respectively. 
	
	 On the set $E_\radius^\eps$ in \eqref{eq:E^eps_delta}, the map $\dot\Ga_{\eps}$ takes values in $C_{\radius,\eps}$. Moreover, using that $\dot\Ga_\eps(0)=\dot\Ga_\eps(\ex)=\frac{\ell(\gae)}{\ex}v$,
     by continuity of $\dot\Ga_\eps$ it turns out that $E_\radius^\eps=\dot\Ga_\eps^{-1}(C_{\radius,\eps})$ is open; hence there exist at most countably many mutually disjoint intervals $(a^i_{\eps,\radius},b_{\eps,\radius}^i)\subseteq (0,\ex)$ such that 
	$$E_\radius^\eps=\cup_{i=1}^\infty (a^i_{\eps,\radius},b_{\eps,\radius}^i).$$

\medskip

\begin{figure}[htbb]
\begin{center}
\begin{tikzpicture}[>=Latex, line cap=round, line join=round]


\draw[line width=0.9pt] (0,0) -- (6,0);
\draw[line width=0.9pt] (0,0.18) -- (0,-0.18);
\draw[line width=0.9pt] (6,0.18) -- (6,-0.18);

\node[below=8pt] at (0,0) {$0$};
\node[below=8pt] at (6,0) {$\pq$};

\def\aone{1.6}
\def\bone{2.4}
\def\atwo{2.435}
\def\btwo{3.235}

\node at (\aone,0) {$($};
\node at (\bone,0) {$)$};
\node at (\atwo,0) {$($};
\node at (\btwo,0) {$)$};

\node[below=8pt] at (\aone,0.2) {$a_{\eps,\radius}^1$};
\node[below=8pt] at (\bone,0.2) {$b_{\eps,\radius}^1$};

\node[above=8pt] at (\atwo+0.15,-0.2) {$a_{\eps,\radius}^2$};
\node[above=8pt] at (\btwo+0.15,-0.2) {$b_{\eps,\radius}^2$};

\draw[->,line width=1pt]
(2,0.5) to[out=30,in=170] (9.5,2);

\node at (6,2.4) {$\dot\Upsilon_\eps$};

\begin{scope}[shift={(11.5,0)}]

\draw[->] (-2.8,0) -- (3.8,0);
\draw[->] (0,-2.5) -- (0,2.5);

\fill (0,0) circle (1.2pt);
\node[below left] at (0,0) {$0$};

\def\R{2.0}
\def\vx{1.45}
\def\r{0.78}

\draw (0,0) circle (\R);
\node[rotate=300] at ({40:\R}) {\tiny$\blacktriangleleft$};
\node[rotate=300] at ({210:\R}) {\tiny$\blacktriangleright$};
\node at ({210:\R+0.4}) {$C_{\radius,\eps}$};

\fill (\vx,0) circle (1.2pt);
\node[below=4pt] at (\vx,0) {$v$};
\draw (\vx,0) circle (\r);

\pgfmathsetmacro{\xint}{(\R*\R - \r*\r + \vx*\vx)/(2*\vx)}
\pgfmathsetmacro{\yint}{sqrt(\R*\R - \xint*\xint)}

\fill (\xint,\yint) circle (1.2pt);
\fill (\xint,-\yint) circle (1.2pt);

\node[right=3pt] at (\xint,-\yint) {$S_{\eps,\radius}$};
\node[right=3pt] at (\xint,\yint) {$R_{\eps,\radius}$};

\end{scope}

\end{tikzpicture}
\caption{The set $C_{\radius,\eps}$
defined in \eqref{eq:C_rho} and the points
$R_{\eps,\radius}$ (starting point) and $S_{\eps,\radius}$
(ending point)  defined in \eqref{eq:R_S}}
\label{fig:one}
\end{center}
\end{figure}

\medskip
	\textit{Step 1:}
To begin with, we first focus our attention to one interval $(a^i_{\eps,\radius},b^i_{\eps,\radius})=:(a_{\eps,\radius},b_{\eps,\radius})\subset(0,\ex)$. 
Since $\dot\Ga_\eps(x)\in C_{\rho,\eps}$ for all $x\in (a_{\eps,\rho},b_{\eps,\rho})$, 
the image of $\dot\Ga_\eps$ is contained in a simply connected arc of the circle. 
In particular, the corresponding angle (the lifting of $\dot\Ga_\eps$) satisfies
$$
|\theta_\eps(x)-\theta_\eps(y)|\le 2\pi
\qquad \text{for all } x,y\in (a_{\eps,\rho},b_{\eps,\rho}).
$$
Let $\tilde\theta_\eps$ be an arbitrary primitive of $\dot\theta_\eps$ on $(a_{\eps,\rho},b_{\eps,\rho})$. 
Define a new primitive $\prim$ by
$$
\prim(x):=\tilde\theta_\eps(x)-\min_{[a_{\eps,\rho},b_{\eps,\rho}]}\tilde\theta_\eps.
$$
Then $\prim\ge 0$ on $(a_{\eps,\rho},b_{\eps,\rho})$. Moreover, by the bound above, we have
$$
\sup_{(a_{\eps,\rho},b_{\eps,\rho})}\prim
=
\sup_{(a_{\eps,\rho},b_{\eps,\rho})}\tilde\theta_\eps
-
\min_{(a_{\eps,\rho},b_{\eps,\rho})}\tilde\theta_\eps
\le 2\pi,
$$
which yields
\begin{equation}\label{eq:angolo}
0\le \prim(x)\le 2\pi
\qquad \text{for all } x\in (a_{\eps,\rho},b_{\eps,\rho}).
\end{equation}

Let $\varphi\in C^{0,1}([0,\ex])$ be a test function.
Using the representation formula \eqref{eq:scal_k} and integrating by parts, we get
\begin{equation}\label{eq:duality_pairing}
\begin{aligned}
\int_{a_{\eps,\rho}}^{b_{\eps,\rho}} \kappa_\eps \, \varphi \, d\parameter
&=
\frac{\ex}{\lgae}\int_{a_{\eps,\radius}}^{b_{\eps,\radius}}\dot\theta_\eps(\parameter)\varphi(\parameter)\, d\parameter
\\
&
=
\frac{\pq}{\lgae}
\Big(
\varphi(b_{\eps,\radius})\prim(b_{\eps,\radius})-\varphi(a_{\eps,\radius})\prim(a_{\eps,\radius})
\Big)
-
\frac{\pq}{\lgae}
\int_{a_{\eps,\radius}}^{b_{\eps,\radius}} \prim(\parameter)\dot\varphi(\parameter)\, d\parameter 
\\
&=:\textrm{I}_{\eps,\radius}+\textrm{II}_{\eps,\radius}
\end{aligned}
\end{equation}
where, to simplify the notation, we write here and in the following $\kappa_\eps$ in place of $\kappa_{\Ga_\eps}$.
From \eqref{eq:angolo} we obtain 
\begin{equation}\label{eq:first}
\begin{aligned}
|\textrm{II}_{\eps,\radius}|
&\leq
\biggl|
\frac{\pq}{\lgae}
\int_{a_{\eps,\radius}}^{b_{\eps,\radius}} \prim(\parameter)\dot\varphi(\parameter)\, d\parameter
\biggr|
\leq
\frac{\pq}{\lgae}
\|\dot\varphi\|_{L^\infty}
\int_{a_{\eps,\radius}}^{b_{\eps,\radius}} |\prim(\parameter)|\, d\parameter
\\
&\leq 2\pi \frac{\pq}{\lgae}\|\dot\varphi\|_{L^\infty}(b_{\eps,\radius}-a_{\eps,\radius}).
\end{aligned}
\end{equation}
Now, we rewrite the boundary terms in \eqref{eq:duality_pairing} as
\begin{equation}\label{eq:boundary}
\textrm{I}_{\eps,\radius}
=
\frac{\pq}{\lgae}
\Big(
\big(\varphi(b_{\eps,\radius})-\varphi(a_{\eps,\radius})\big)\prim(b_{\eps,\radius})
+
\varphi(a_{\eps,\radius})\big(\prim(b_{\eps,\radius})-\prim(a_{\eps,\radius})\big)\Big)
=:\textrm{I}^{(1)}_{\eps,\radius}+\textrm{I}^{(2)}_{\eps,\radius}
\end{equation}
and using \eqref{eq:angolo} we estimate the first term as 
\begin{equation}
\begin{aligned}\label{eq:second}
|\textrm{I}_{\eps,\radius}^{(1)}|&=
\frac{\pq}{\lgae}
\big|
\big(\varphi(b_{\eps,\radius})-\varphi(a_{\eps,\radius})\big)\prim(b_{\eps,\radius})
\big|
\leq
2\pi\,\frac{\pq}{\lgae}|\varphi(b_{\eps,\radius})-\varphi(a_{\eps,\radius})|
\\
&
\leq
2\pi\,\frac{\pq}{\lgae}\|\dot\varphi\|_{L^\infty}(b_{\eps,\radius}-a_{\eps,\radius}).
\end{aligned}
\end{equation}
In order to estimate $\textrm{I}^{(2)}_{\eps,\radius}$
we distinguish between two geometric configurations: either $\dgae(a_{\eps,\radius})\neq \dgae(b_{\eps,\radius})$ or 
$\dgae(a_{\eps,\radius})=\dgae(b_{\eps,\radius})$.
If $\dgae(a_{\eps,\radius})=\dgae(b_{\eps,\radius})$ 
then 
$\theta_\eps(b_{\eps,\radius})-\theta_\eps(a_{\eps,\radius})=0$,
and $\textrm{I}^{(2)}_{\eps,\radius}=0$. 
Combining \eqref{eq:duality_pairing}, \eqref{eq:boundary}, \eqref{eq:first}, and \eqref{eq:second}, we obtain
\begin{equation}\label{pi}
\biggl|
\int_{a_{\eps,\radius}}^{b_{\eps,\radius}} \kappa_\eps \, \varphi \, d\parameter
\biggr|
\leq |\textrm{I}_{\eps,\radius}^{(1)}|+|\textrm{II}_{\eps,\radius}|
\leq C\|\dot\varphi\|_{L^\infty}(b_{\eps,\radius}-a_{\eps,\radius}).
\end{equation}

If instead $$\dgae(a_{\eps,\radius})\neq \dgae(b_{\eps,\radius})$$ we will have $$\dgae(a_{\eps,\radius})=R_{\eps,\radius} \text{ and }\dgae(b_{\eps,\radius})=S_{\eps,\radius} \text{ or viceversa. }$$
In the former case 
\begin{equation*}
\theta_\eps(b_{\eps,\radius})-\theta_\eps(a_{\eps,\radius}) =  2\pi - d_{\eps,\radius},
\end{equation*}
where $d_{\eps,\radius}\geq0$ is a function depending on $\radius$ and $\eps$ and such that 
$d_{\eps,\radius}\leq |o_{\radius}(1)|$ with $o_{\radius}(1)$ being independent of $\eps$ and vanishing as $\radius\rightarrow 0^+$. Instead, if $\dgae(a_{\eps,\radius})=S_{\eps,\radius}$ and $\dgae(b_{\eps,\radius})=R_{\eps,\radius}$ it will be 
\begin{equation}\label{eq:salto}
	\theta_\eps(b_{\eps,\radius})-\theta_\eps(a_{\eps,\radius}) =  -2\pi  +d_{\eps,\radius}.
\end{equation}
Therefore, in the first case
\begin{equation*}
\varphi(a_{\eps,\radius})\big(\prim(b_{\eps,\radius})-\prim(a_{\eps,\radius})\big)
=
 2\pi\,\varphi(a_{\eps,\radius}) -d_{\eps,\radius}\varphi(a_{\eps,\radius})
=
 2\pi\,\varphi({a_{\eps,\radius}}) - d_{\eps,\radius}\varphi(a_{\eps,\radius})
\end{equation*}
while, in the second one
\begin{equation*}
\varphi(a_{\eps,\radius})\big(\prim(b_{\eps,\radius})-\prim(a_{\eps,\radius})\big)
=
 -2\pi\,\varphi(a_{\eps,\radius}) +d_{\eps,\radius}\varphi(a_{\eps,\radius})
=
 -2\pi\,\varphi({a_{\eps,\radius}}) + d_{\eps,\radius}\varphi(a_{\eps,\radius}).
\end{equation*} 
Hence
\begin{equation*}
    \textrm{I}_{\eps,\radius}^{(2)}
    =
    \frac{\pq}{\lgae} (\pm 2\pi\,\varphi(a_{\eps,\radius}) \mp d_{\eps,\radius}\varphi(a_{\eps,\radius})).
\end{equation*}
We conclude
\begin{equation}\label{pi+}
	\biggl|
	\int_{a_{\eps,\radius}}^{b_{\eps,\radius}} (\kappa_\eps\mp2\pi\delta_{a_{\eps,\radius}})\, \varphi \, d\parameter
	\biggr|
	\leq C\|\dot\varphi\|_{L^\infty}(b_{\eps,\radius}-a_{\eps,\radius})+Cd_{\eps,\radius}\|\varphi\|_{L^\infty}.
\end{equation}

\textit{Step 2:} 
We want now to estimate how many intervals $(a_{\eps,\radius}^i,b_{\eps,\radius}^i)$ as in the previous step satisfy  $\dgae(a^i_{\eps,\radius})\neq \dgae(b^i_{\eps,\radius})$; for each of such intervals we have, using \eqref{eq:salto}, 
\begin{align*}
	2\pi-d_{\eps,\radius}= 
\left|\int_{a^i_{\eps,\radius}}^{b^i_{\eps,\radius}}\kappa_\eps\,d\parameter\right|\leq \int_{a^i_{\eps,\radius}}^{b^i_{\eps,\radius}}|\kappa_\eps| \,d\parameter
\end{align*}
and therefore, if we denote by $N_{\eps,\radius}$ the number of such intervals, since we can assume $d_{\eps,\radius}<\pi$ we conclude, using \eqref{eq:kx_uniformly}, that 
\begin{equation}
N_{\eps,\radius}\leq \frac1\pi\int_{E^\eps_\radius}|\kappa_\eps|\, d\parameter\leq \frac {C}{\pi\radius}.
\end{equation}
In particular, for $\radius=\frac18$ we find $N_{\eps,\frac18}\leq \widehat C$ for an absolute constant $\widehat C>0$.

We claim that 
$$ \text{ for all }
\radius',\radius''\in\Big(0,\frac18\Big) \text{ with }\radius'<\radius'', \text{ it holds } N_{\eps,\radius'}\leq N_{\eps,\radius''},$$
and so
\begin{equation}\label{number_deltas}
	N_{\eps,\radius}\leq \widehat C \qquad\forall \radius\in \Big(0,\frac18\Big),
\end{equation}
for all $\eps$ small enough. To see that $N_{\eps,\radius'}\leq N_{\eps,\radius''}$ it is enough  to observe that if $\dgae(a_{\eps,\radius'})=R_{\eps,\radius'}$ and $\dgae(b_{\eps,\radius'})=S_{\eps,\radius'}$ (we argue similarly if the opposite situation holds),  then by continuity of $\dgae$ there is a subinterval $(a_{\eps,\radius''},b_{\eps,\radius''})\subset (a_{\eps,\radius'},b_{\eps,\radius'})$ with $\dgae(a_{\eps,\radius''})=R_{\eps,\radius''}$ and $\dgae(b_{\eps,\radius''})=S_{\eps,\radius''}$. This concludes Step 2.

As a consequence of the previous discussion, we can select a not-relabelled infinitesimal subsequence of $\eps$ such that $$N_{\eps,\radius}=N \text{ is constant and does not depend on } \eps \text{ and }\radius.$$ 

Let now $(a_{\eps,\radius}^i,b_{\eps,\radius}^i)$ for $i=1,\dots,N$ be the aforementioned intervals and let $(a_{\eps,\radius}^i,b_{\eps,\radius}^i)$ for $i>N$ denote the other intervals satisfying $\dgae(a_{\eps,\radius})=\dgae(b_{\eps,\radius})$ so that \eqref{pi} holds; we set
$$\omega_{\eps,\radius}:= 2\pi\sum_{i=1}^{N}\alpha_i\delta_{a^i_{\eps,\radius}}$$ 
where $\alpha_i$ is $\pm1$ according to the case that $\dgae(a_{\eps,\radius}^i)=R_{\eps,\radius}$ and $\dgae(b_{\eps,\radius}^i)=S_{\eps,\radius}$ or viceversa, respectively
\footnote{Choosing $b_{\eps,\rho}^i$ instead of $a_{\eps,\rho}^i$ in the definition of $\omega_\eps$ would lead to the same limit in the flat norm. Indeed, for every test function $\varphi\in C^{0,1}([0,\pq])$ with $\|\varphi\|_{C^{0,1}}\le 1$, one has
\[
\bigl|\delta_{a_{\eps,\rho}^i}(\varphi)-\delta_{b_{\eps,\rho}^i}(\varphi)\bigr|
=
|\varphi(a_{\eps,\rho}^i)-\varphi(b_{\eps,\rho}^i)|
\le |b_{\eps,\rho}^i-a_{\eps,\rho}^i|,
\]
and therefore
\[
\bigl\|\delta_{a_{\eps,\rho}^i}-\delta_{b_{\eps,\rho}^i}\bigr\|_{\mathrm{flat}}
\le |b_{\eps,\rho}^i-a_{\eps,\rho}^i|.
\]
Since the total length of the intervals $(a_{\eps,\rho}^i,b_{\eps,\rho}^i)$ tends to zero as $\eps\to0^+$ for fixed $\rho$, the choice of the left or right endpoint is equivalent in the limit.}
.
Notice carefully that $\omega_{\eps,\radius}\in M_{{\rm fin}, \mathbb Z}
([0,\ex])$ and that 
\begin{equation*}
    |\omega_{\eps,\radius}|([0,\ex])\leq 2\pi N\leq C
\end{equation*}
where $C$ is independent of $\eps,\radius$.
Using \eqref{pi+}, we finally estimate
\begin{align}\label{estimate_1}
	\biggl|
	\int_{\cup_{i=1}^N(a_{\eps,\radius}^i,b_{\eps,\radius}^i)} (\kappa_\eps -\omega_{\eps,\radius})\, \varphi \, d\parameter
	\biggr|
	\leq C|E^\eps_\radius|+CN d_{\eps,\radius}\leq C\eps^{\frac12}+Cd_{\eps,\radius}\leq C\eps^{\frac12}+C|o_{\radius}(1)|, 
\end{align}
for all $\varphi\in C^{0,1}([0,\pq])$ with $\|\varphi\|_{W^{1,\infty}}\leq 1$, where we have used also \eqref{eq:grandezza_E} and \eqref{number_deltas}. 

%

\textit{Step 3:} We now estimate the flat norm of $\kappa_\eps$ on $[0,\pq]\setminus \cup_{i=1}^N(a_{\eps,\radius}^i,b_{\eps,\radius}^i)$.
We shall show that $\exists \, C> 0$ such that for any $\radius\in(0,\frac 18)$ and any $\eps\in (0,1]$ 
\begin{equation}\label{eq:complementare}
	\biggl|
	\int_{[0,\pq]\setminus \cup_{i=1}^N(a_{\eps,\radius}^i,b_{\eps,\radius}^i)}  \kappa_\eps \varphi \, d\parameter
	\biggr|
	\leq
    C\radius
\end{equation}
for any $\varphi\in C^{0,1}([0,\ex])$, $\|\varphi\|_{W^{1,\infty}}\leq 1$.
Up to relabelling, assume that $0< a_{\eps,\radius}^1< b_{\eps,\radius}^1\leq a_{\eps,\radius}^2<b_{\eps,\radius}^2\leq\dots\leq a_{\eps,\radius}^N<b_{\eps,\radius}^N< \pq$, and so we might write
$$[0,\pq]\setminus \cup_{i=1}^N(a_{\eps,\radius}^i,b_{\eps,\radius}^i)=\bigcup_{i=1}^{N+1}[b^{i-1}_{\eps,\radius},a^i_{\eps,\radius}]$$ where we have set
$b^0_{\eps,\radius}=0$ and $a^{N+1}_{\eps,\radius}=\pq$.
%
Arguing as in \eqref{eq:duality_pairing},\eqref{eq:boundary}, denoting by $\prim$ a primitive of $\dot\theta_\eps$ on $({b^{i-1}_{\eps,\radius}},{a^i_{\eps,\radius}})$ we have
\begin{equation}
\begin{aligned}\label{eq:duality_pairing_bis}
\int_{b^{i-1}_{\eps,\radius}}^{a^i_{\eps,\radius}} \kappa_\eps \, \varphi \, d\parameter
&=
\frac{\pq}{\lgae} \Big(
\varphi(a^i_{\eps,\radius})\,\prim(a^i_{\eps,\radius}) - \varphi(b^{i-1}_{\eps,\radius})\,\prim(b^{i-1}_{\eps,\radius})
\Big)
-
\frac{\pq}{\lgae} \int_{b^{i-1}_{\eps,\radius}}^{a^i_{\eps,\radius}} \prim(\parameter)\dot\varphi(\parameter)\, d\parameter
\\
&=
\frac{\pq}{\lgae} \Big[
\big(\varphi(a^i_{\eps,\radius})-\varphi(b^{i-1}_{\eps,\radius})\big)\prim(a^i_\eps)
+
\varphi(b^{i-1}_\eps)\big(\prim(a^i_{\eps,\radius})-\prim(b^{i-1}_{\eps,\radius})\big)
\Big]
\\
&\quad
-
\frac{\pq}{\lgae} \int_{b^{i-1}_{\eps,\radius}}^{a^i_{\eps,\radius}}\prim(\parameter)\dot\varphi(\parameter)\, d\parameter .
\end{aligned}
\end{equation}
By definition of the intervals $(a^i_{\eps,\radius},b^i_{\eps,\radius})$,  
\begin{equation*}
|\theta_\eps(a^i_{\eps,\radius})-\theta_\eps(b^{i-1}_{\eps,\radius})| \le C\radius.
\end{equation*}
Consequently, the second boundary term in \eqref{eq:duality_pairing_bis} can be estimated as
\begin{equation}\label{eq:uno}
\frac{\pq}{\lgae}
\big|\varphi(b^{i-1}_{\eps,\radius})\big(\prim(a^i_{\eps,\radius})-\prim(b^{i-1}_{\eps,\radius})\big)\big|
\le C\frac{\pq}{\lgae}\|\varphi\|_{L^\infty}\,\radius.
\end{equation}
Moreover,
\begin{equation}\label{eq:due}
\begin{aligned}
\frac{\pq}{\lgae}
\big|
\big(\varphi(a^i_{\eps,\radius})-\varphi(b^{i-1}_{\eps,\radius})\big)\prim(a^i_{\eps,\radius})
\big|
&\leq
\frac{\pq}{\lgae}
\|\dot\varphi\|_{L^\infty}(a^i_{\eps,\radius}-b^{i-1}_{\eps,\radius})\,|\prim(a^{i-1}_{\eps,\radius})|
\\
&
\leq
C
\frac{\pq}{\lgae}
\radius\,\|\dot\varphi\|_{L^\infty}(a^i_{\eps,\radius}-b^{i-1}_{\eps,\radius}).
\end{aligned}
\end{equation}
Eventually,
\begin{align}\label{eq:tre}
\biggl|
\int_{b^{i-1}_{\eps,\radius}}^{a^i_{\eps,\radius}} \prim(\parameter)\dot\varphi(\parameter)\, d\parameter
\biggr|
=\biggl|
\int_{(b^{i-1}_{\eps,\radius},a^i_{\eps,\radius})\setminus E^\eps_\radius} \prim(\parameter)\dot\varphi(\parameter)\, d\parameter
\biggr|
\leq
C\radius\,\|\dot\varphi\|_{L^\infty}(a^i_{\eps,\radius}-b^{i-1}_{\eps,\radius}).
\end{align}

\nada{
Eventually,
\begin{align}\label{eq:tre}
\biggl|
\int_{b^{i-1}_{\eps,\radius}}^{a^i_{\eps,\radius}} \dot\varphi(\parameter)\prim(\parameter)\, d\parameter
\biggr|&\leq\biggl|
\int_{(b^{i-1}_{\eps,\radius},a^i_{\eps,\radius})\setminus E^\eps_\radius} \dot\varphi(\parameter)\prim(\parameter)\, d\parameter
\biggr|+\biggl|
\int_{(b^{i-1}_{\eps,\radius},a^i_{\eps,\radius})\cap E^\eps_\radius} \dot\varphi(\parameter)\prim(\parameter)\, d\parameter
\biggr|\nonumber\\
&
\leq
\radius\,\|\dot\varphi\|_{L^\infty}(a^i_{\eps,\radius}-b^{i-1}_{\eps,\radius})+2\pi\|\dot\varphi\|_{L^\infty}|(b^{i-1}_{\eps,\radius},a^i_{\eps,\radius})\cap E_\radius^\eps|,
\end{align}
where to estimate the second term in the second inequality we have used that $\prim\leq 2\pi$... {\color{red} OCCHIO a questo $2\pi$}
}

Collecting \eqref{eq:duality_pairing_bis}, \eqref{eq:uno},\eqref{eq:due} and \eqref{eq:tre} we obtain, for a absolute constant $C>0$,
\begin{equation*}
\biggl|
\int_{b^{i-1}_{\eps,\radius}}^{a^i_{\eps,\radius}}  \kappa_\eps \varphi \, d\parameter
\biggr|
\leq
C\radius+
C\,\radius\,(a^i_{\eps,\radius}-b^{i-1}_{\eps,\radius})
\end{equation*}
and so, summing over $i=1,\dots,N+1$, we conclude
\begin{equation}\label{eq:complementare_bis}
	\biggl|
	\int_{[0,\pq]\setminus \cup_{i=1}^N(a_{\eps,\radius}^i,b_{\eps,\radius}^i)}  \kappa_\eps \varphi \, d\parameter
	\biggr|
	\leq
    C\radius (N+1)+
	C\,\radius
    \leq C\radius
\end{equation}
where we recall that $N$ is constant and does not depend on $\eps$ and $\radius$; this proves \eqref{eq:complementare}.
Using \eqref{estimate_1} and \eqref{eq:complementare_bis}, we arrive at
\begin{equation*}
	\biggl|
	\int_{0}^\pq  (\kappa_\eps -\omega_{\eps,\radius})\varphi \, d\parameter
	\biggr|
	\leq C\radius
	+C\eps^{\frac12}+C|o_{\radius}(1)|\,,
\end{equation*}
for all $\varphi\in C^{0,1}([0,\pq])$ with $\|\varphi\|_{W^{1,\infty}}\leq 1$,
which shows that 
 for every fixed $\radius\in (0,\frac18)$,
\begin{equation*}
\limsup_{\eps\to 0^+}
\bigl\| \kappa_\eps - \omega_{\eps,\radius} \bigr\|_{\mathrm{flat},[0,\ex]}
\le C(\radius+o_{\radius}(1)).
\end{equation*}
Passing to further subsequence, we can assume that 
\begin{equation}\label{eq:ai_eps_to_ai}
   a_{\eps,\radius}^i\rightarrow a_\radius^i\in [0,\pq] \quad \text{for all} \quad i=1,\dots,N.
\end{equation}
 Therefore we easily conclude, as $\eps\rightarrow 0^+$,
\begin{align}\label{muconv}
\omega_{\eps,\radius}\to \omega_\radius:=2\pi\sum_{i=1}^{N} \alpha_i\delta_{a_\radius^i}
\qquad \qquad\text{ in the flat norm,  }
\end{align}
with
\begin{equation}\label{eq:var}
    |\omega_{\radius}|([0,\ex])\leq 2\pi N\leq C.
\end{equation}
Now up to a subsequence, by \eqref{eq:var}, there exists $\omega\in M_{{\rm fin}, \mathbb Z}
([0,\ex])$ such that 
$$
\omega_\radius\to\omega \quad\text{ in the flat norm}.$$
Combining the previous inequality  with \eqref{muconv}
we get
\begin{align*}
	\limsup_{\eps\to 0^+}
	\bigl\| \kappa_\eps - \omega \bigr\|_{\mathrm{flat},[0,\ex]}
    &\leq
    \limsup_{\eps\to 0^+} \big(
	\bigl\| \kappa_\eps - \omega_{\eps,\radius} \bigr\|_{\mathrm{flat},[0,\ex]}+
	\bigl\| \omega_{\eps,\radius}-\omega_{\radius} \bigr\|_{\mathrm{flat},[0,\ex]}
    +
	\bigl\| \omega_{\radius}-\omega \bigr\|_{\mathrm{flat},[0,\ex]}\big)
    \\
    &\leq
    C(\radius+o_{\radius}(1))+
	\bigl\| \omega_{\radius}-\omega \bigr\|_{\mathrm{flat},[0,\ex]}.
\end{align*}
Since $\radius\in (0,\frac18)$ is arbitrary, we then obtain that
\begin{equation*}
\lim_{\eps\to 0^+}
\bigl\| \kappa_\eps - \omega \bigr\|_{\mathrm{flat},[0,\ex]} = 0.
\end{equation*}
\end{proof}

Let us define a strictly increasing odd
function $\Phi\in C^1(\mathbb{R})$ by 
\begin{equation}\label{eq:Phi}
    \Phi(\theta):=\int_0^\theta 2\sqrt{1-\cos\phi}\,d\phi.
\end{equation}
\begin{Lemma}\label{lem:theta_w}
Let $\theta_\eps$ be as is \eqref{eq:scal_k}. We have 
\begin{equation*}
\lim_{\eps\to 0^+}
    \Big\|\frac{2\pi}{8\sqrt{2}} \partial_x (\Phi\circ\theta_\eps)-\partial_x\theta_\eps\Big\|_{\mathrm{flat},[0,\ex]}
    =0.
\end{equation*}
In particular, letting $\omega\in M_{{\rm fin},\mathbb Z}([0,\ex])$
be the measure for which \eqref{eq:compactness_in_flat_norm} holds, we have
\begin{equation}\label{eq:theta_mu}
\frac{2\pi}{8\sqrt{2}}\partial_x(\Phi\circ\theta_{\eps_k})\to\omega
\qquad \text{in the flat norm}.
\end{equation}
\end{Lemma}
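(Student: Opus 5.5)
Since $\kappa_{\Ga_\eps}=\frac{\pq}{\lgae}\dot\theta_\eps$ and $\lgae/\pq\to1$, the "in particular" part \eqref{eq:theta_mu} follows from the main claim, Proposition~\ref{lem:compactness_in_flat_norm} and the triangle inequality for the flat norm. The main claim concerns $\psi_\eps:=\frac{2\pi}{\sigma}\Phi(\theta_\eps)-\theta_\eps$, where $\sigma=8\sqrt2=\Phi(2\pi)$. The function $\Phi(\theta)-\frac{\sigma}{2\pi}\theta$ is $2\pi$-periodic, because $\sqrt{1-\cos\phi}$ is $2\pi$-periodic with integral $\sigma/2$ over a period. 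So $\psi_\eps=g(\theta_\eps)$, where $g(\theta):=\frac{2\pi}{\sigma}\Phi(\theta)-\theta$ is $2\pi$-periodic, Lipschitz and bounded. Moreover $g(2\pi k)=0$ for all $k\in\mathbb Z$, and near $0$ we have $\Phi(\theta)=O(\theta^2)$, so $|g(\theta)|\le C\,\mathrm{dist}(\theta,2\pi\mathbb Z)$. The key object is therefore $\partial_x(g\circ\theta_\eps)$, whose flat norm we estimate by duality. For $\varphi$ with $\|\varphi\|_{C^{0,1}}\le1$, integration by parts gives
\[
\int_0^\pq \partial_x(g\circ\theta_\eps)\varphi\,dx=g(\theta_\eps(\pq))\varphi(\pq)-g(\theta_\eps(0))\varphi(0)-\int_0^\pq g(\theta_\eps)\dot\varphi\,dx .
\]
The boundary terms vanish, because the tangent is horizontal at the endpoints: $\theta_\eps(0),\theta_\eps(\pq)\in2\pi\mathbb Z$ (normalizing $\theta_\eps(0)=0$, as in \eqref{eq:condizioni_bordo}).

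It then suffices to show that $\|g(\theta_\eps)\|_{L^1(0,\pq)}\to0$. Split $[0,\pq]$ into $E^\eps_\radius$ and its complement. On the complement, $|\dot\Ga_\eps-v|\le\radius$ forces $\mathrm{dist}(\theta_\eps,2\pi\mathbb Z)\le C\radius$, hence $|g(\theta_\eps)|\le C\radius$. On $E^\eps_\radius$ we use $|g|\le\|g\|_\infty$ and $|E^\eps_\radius|\le C\eps^{1/2}\radius^{-2}$ from \eqref{eq:grandezza_E}. Altogether $\|g(\theta_\eps)\|_{L^1}\le C\radius\pq+C\eps^{1/2}\radius^{-2}$. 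Sending first $\eps\to0^+$ and then $\radius\to0^+$ gives the claim. A cleaner alternative is to use $|g(\theta)|\le C\sqrt{1-\cos\theta}$ (both sides vanish linearly at $2\pi\mathbb Z$ and are bounded), together with $\int_0^\pq(1-\cos\theta_\eps)\,dx\le C(\lgae-\pq)\le C\eps^{1/2}$ from Remark~\ref{rem:phase_transition}. Cauchy--Schwarz then yields $\|g(\theta_\eps)\|_{L^1}\le C\eps^{1/4}$.

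The main subtle point is making sure the boundary terms vanish and that the lifting is a global continuous $H^1$ function. The boundary terms vanish because $g$ is zero exactly on $2\pi\mathbb Z$ and the endpoint angles lie there. The lifting is continuous since $\Ga_\eps\in H^2$, so the chain rule $\partial_x(g\circ\theta_\eps)=g'(\theta_\eps)\dot\theta_\eps$ holds without jump contributions. Once these are in place, the two-region estimate is routine.
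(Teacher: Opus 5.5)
Your proof is correct, and its main line is the paper's. You use the same $2\pi$-periodic function $g=\frac{2\pi}{\sigma}\Phi-\mathrm{id}$ and integrate by parts to bound the flat norm by $\|g(\theta_\eps)\|_{L^1(0,\pq)}$. You then split into $E^\eps_\radius$ and its complement, letting $\eps\to0^+$ first and then $\radius\to0^+$. You are more careful than the paper on one point: the paper drops the boundary terms $g(\theta_\eps(\pq))\varphi(\pq)-g(\theta_\eps(0))\varphi(0)$ without comment, while you check that they vanish.

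Your alternative argument is a genuinely different route, cleaner and quantitative. It combines $|g(\theta)|\le C\sqrt{1-\cos\theta}$, the exact identity $\int_0^\pq(1-\cos\theta_\eps)\,dx=\pq(\lgae-\pq)/\lgae\le C\eps^{1/2}$, and Cauchy--Schwarz. This gives $\|g(\theta_\eps)\|_{L^1}\le C\eps^{1/4}$ directly, with no bad set $E^\eps_\radius$ and no double limit.

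One small correction: $g$ does not vanish only on $2\pi\mathbb Z$. Since $\Phi$ is odd, $g$ is odd and $2\pi$-periodic, so $g(\pi)=g(-\pi)=-g(\pi)$. Hence $g$ vanishes on all of $\pi\mathbb Z$. This works in your favour. Horizontality $\dot\Ga_{\eps,2}(0)=\dot\Ga_{\eps,2}(\pq)=0$ by itself only places the endpoint angles in $\pi\mathbb Z$; the stronger $2\pi\mathbb Z$ comes from the paper's normalization \eqref{eq:condizioni_bordo}. The boundary terms vanish in either case.
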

\begin{proof}
Set
\begin{equation*}
g(\theta):=\frac{2\pi}{8\sqrt{2}}\Phi(\theta)-\theta.
\end{equation*}
Since
$\Phi(\theta+2\pi)
=\Phi(\theta)+\displaystyle\int_0^{2\pi}2\sqrt{1-\cos\phi} \,d\phi
=\Phi(\theta)+8\sqrt{2},$
it follows that
$g(\theta+2\pi)=g(\theta)$ for all $\theta\in\mathbb R$.
Hence $g$ is continuous and $2\pi$-periodic and, 
therefore, it is bounded on $\mathbb R$. Let
\begin{equation*}
M:=\|g\|_{L^\infty(\mathbb R)}<+\infty.
\end{equation*}
Moreover, for every $k\in\mathbb Z$,
\begin{equation*}
g(2\pi k)
=
\frac{2\pi}{8\sqrt{2}}\Phi(2\pi k)-2\pi k
=0.
\end{equation*}
As a consequence,
\begin{equation}\label{eq:lim_sup_g}
\lim_{\delta\to 0^+}\sup\bigl\{|g(\theta)|:\text{dist}(\theta,2\pi\mathbb Z)\le \delta\bigr\} =0.
\end{equation}
For every
$\varphi\in C^{0,1}([0,\pq])$ with $\|\varphi\|_{W^{1,\infty}}\le 1$, integrating by parts we obtain
\begin{align*}
\left|\left\langle
\frac{2\pi}{8\sqrt{2}} \partial_x (\Phi\circ\theta_\eps)-\partial_x\theta_\eps,
\varphi
\right\rangle \right|
=
\left|
-\int_0^\pq g(\theta_\eps(x))\,\dot\varphi(x)\,dx
\right|
\le
\|g(\theta_\eps)\|_{L^1([0,\pq])}
.
\end{align*}
Taking the supremum over all $\varphi$ yields
\begin{equation}\label{eq:minore_g_L1}
\Big\|
\frac{2\pi}{8\sqrt{2}} \partial_x (\Phi\circ\theta_\eps)-\partial_x\theta_\eps
\Big\|_{\mathrm{flat},[0,\ex]}
\le
\|g(\theta_\eps)\|_{L^1([0,\pq])}.
\end{equation}
Using \eqref{eq:grandezza_E},
\begin{equation}\label{eq:g-on-E}
\int_{E_\radius^\eps} |g(\theta_\eps)|\,dx
\le
M\,|E_\radius^\eps|
\le
\frac{CM}{\radius^2}\eps^{1/2}.
\end{equation}

On the other hand, if $x\notin E_\radius^\eps$, then
$|\dgae(x)-v|\le \radius$,
where we recall that $v=\frac{p-q}{\ex}$.
Since
$\dgae(x)
=
\frac{\lgae}{\pq}(\cos\theta_\eps(x),\sin\theta_\eps(x)),$
and $\lgae/\pq\to 1$, it follows that for $\eps>0$ small enough the angle $\theta_\eps(x)$ must be close to some multiple of $2\pi$, uniformly in $x\in [0,\pq]\setminus E_\radius^\eps$. More precisely, there exists a quantity $\delta_\radius^\eps\ge 0$ such that
\begin{equation*}\label{eq:theta-close}
\textrm{dist}(\theta_\eps(x),2\pi\mathbb Z)\le \delta_\radius^\eps
\qquad \forall x\in [0,\pq]\setminus E_\radius^\eps,
\end{equation*}
and
\begin{equation}\label{eq:delta-rho}
\limsup_{\eps\to 0^+}\delta_\radius^\eps\le C\radius.
\end{equation}
Hence, using \eqref{eq:g-on-E}, we get
\begin{equation*}
\|g(\theta_\eps)\|_{L^1([0,\pq])}
\le
\frac{CM}{\radius^2}\eps^{1/2}
+
\pq \sup\bigl\{|g(\theta)|:\text{dist}(\theta,2\pi\mathbb Z)\le \delta^\eps_\radius\bigr\},
\end{equation*}
from which, recalling \eqref{eq:delta-rho}
\begin{equation*}
\limsup_{\eps\to 0^+}\|g(\theta_\eps)\|_{L^1([0,\pq])}
\le
\pq \sup\bigl\{|g(\theta)|:\text{dist}(\theta,2\pi\mathbb Z)\le C\radius\}.
\end{equation*}
Letting $\radius\to 0^+$ and recalling \eqref{eq:lim_sup_g}, we conclude that
\begin{equation*}
\lim_{\eps\to 0^+}\|g(\theta_\eps)\|_{L^1([0,\pq])}=0.
\end{equation*}
This, together with \eqref{eq:minore_g_L1} gives the first part of the statement.
Now, Proposition~\ref{lem:compactness_in_flat_norm} implies
$$\partial_x\theta_{\eps_k}\to\omega
\qquad\text{in the flat norm},$$
hence \eqref{eq:theta_mu} follows as well.
\end{proof}
\subsection{\texorpdfstring{$\Gamma$-liminf inequality}{Gamma-liminf inequality}}\label{sec:lb}
In this section we prove the $\Gamma-\liminf$ inequality of Theorem \ref{thm:main_result}. To this purpose we may take, without loss of generality, a sequence $(\gae,\kae)$ such that 
\begin{equation*}
   \lgae-|p-q|\leq C{\eps^{1/2}},\qquad \qquad \int_{\gamma_\eps}\eps^{1/2}\kae^2 \ d s\leq C,
\end{equation*}
which, from Lemma \ref{lem:H1} and Proposition \ref{lem:compactness_in_flat_norm}, imply $\Ga_\eps\to\Ga$ strongly in $H^1([0,\pq];\mathbb{R}^2)$ and $\kappa_{\Ga_\eps}$ converge in the flat norm to $\omega=2\pi\sum_{j=1}^{N} c_j\delta_{x^j}$, with $N\geq 0$, 
$0\leq x_0<x_1< \dots< x_N\leq\pq$ and $c_j\in \mathbb Z$. We have to prove that 
\begin{equation*}\label{eq:liminf}
    \liminf_{\eps\to0^+}
G_\eps(\Ga_\eps)
\ge
\sigma\, G(\Ga,\omega).
\end{equation*}
The proof relies on Remark \ref{rem:phase_transition}.
We can use
the following \textit{Modica-Mortola trick} to estimate
\begin{align*}
G_\eps(\Ga_\eps)
&= \frac{\pq}{\lgae}
\int_0^{\pq} \eps^{1/2}|\partial_\parameter\theta_\eps|^2\,d\parameter
+
\frac{\lgae}{\pq}
\int_0^{\pq} \frac{1}{\eps^{1/2}} (1-\cos\theta_\eps)\,d\parameter
\geq
\int_0^{\pq} 2|\partial_\parameter\theta_\eps| \sqrt{1-\cos\theta_\eps} \,d\parameter 
,
\end{align*}
where we have simply used the algebraic inequality $\eps^{1/2}a^2+\frac{1}{\eps^{1/2}} b^2\geq 2|a||b|$, with $a=\sqrt{\frac{\pq}{\ell(\gae)}}|\partial_\parameter\theta_\eps|$ and $b=\sqrt{\frac{\ell(\gae)}{\pq}}\sqrt{1-\cos\theta_\eps}$.
By the definition of the function $\Phi$ in \eqref{eq:Phi}, its derivative is
\begin{equation*}
\Phi'(\theta)=2\sqrt{1-\cos\theta},
\end{equation*}
and therefore
\begin{equation*}\label{eq:G_var}
G_\eps(\Ga_\eps)
\geq
\int_0^{\pq} 2|\partial_\parameter\theta_\eps| \sqrt{1-\cos\theta_\eps} \,d\parameter
= 
\int_0^{\pq} |\partial_\parameter(\Phi\circ\theta_\eps)| \,d\parameter = |\partial_\parameter(\Phi\circ\theta_\eps)|([0,\pq]).
\end{equation*}
Recalling from \eqref{eq:condizioni_bordo} that $\theta_\eps(0)=0$,
it follows that the sequence $(\Phi\circ\theta_\eps)_\eps$ is uniformly bounded in
$BV([0,\pq]))$. Hence, up to a further subsequence, there exists a function
$u\in BV([0,\pq])$ such that
\begin{equation*}
\Phi\circ\theta_\eps \to u
\qquad \text{in }L^1([0,\pq]).
\end{equation*}
Hence the sequence $\big(\partial_x(\Phi\circ\theta_\eps)\big)$ converges weakly* as measures to $\partial_x u$ and,
by the lower semicontinuity of the total variation with respect to $L^1$ convergence, we deduce
\begin{equation}\label{eq:lsc_BV}
\liminf_{\eps\to0^+} G_\eps(\Ga_\eps)
\ge
\liminf_{\eps\to0^+} |\partial_x(\Phi\circ\theta_\eps)|([0,\ex])
\ge
|\partial_x u|([0,\pq]).
\end{equation}
It remains to identify the structure of the limit function $u$.  
Since \eqref{eq:theta_mu} holds, we deduce that 
\begin{equation*}
\partial_x u
=
\frac{\sigma}{2\pi}\omega
=
\sigma\sum_{j=1}^N c_j\,\delta_{x_j}.
\end{equation*}
Therefore,
\begin{equation}\label{eq:Du_total_variation}
|\partial_\parameter u|([0,\pq])
=
\sigma\sum_{j=1}^N |c_j|.
\end{equation}
From \eqref{eq:lsc_BV} and \eqref{eq:Du_total_variation} we get 
\begin{equation*}\label{eq:sci_quasi}
\liminf_{\eps\to0} G_{\eps}(\Ga_{\eps})
\ge
\sigma\sum_{j=1}^N |c_j|
=\sigma\,G(\Ga,\omega).
\end{equation*}

\nada{
From the bound $C\geq G_{\eps}(\gae)$ and the expression of $G_\eps$, we infer that
 $$1-\cos\theta_{\eps}\to0 \text{ in } L^1([0,\pq]),$$
hence (up to a further subsequence) $\cos\theta_{\eps}\to1$ a.e. in $[0,\pq]$.
It follows that any a.e. limit of $(\theta_{\eps})$ takes values in the set
$2\pi\mathbb{Z}$, and consequently $u$ takes values in the discrete set
$$\Phi(2\pi\mathbb{Z}).$$
Hence $u$ is piecewise constant and its distributional derivative is a purely
atomic measure,
\begin{equation*}\label{eq:Du_atomic}
\partial_\parameter u=\sum_{j=1}^N (u_j^+-u_j^-)\,\delta_{x_j},
\qquad 0<x_1\le \dots\le x_N<\pq.
\end{equation*}
Moreover, since $u_j^\pm\in \Phi(2\pi\mathbb Z)$, there exist integers $d_j\in\mathbb Z\setminus\{0\}$
such that
\begin{equation*}\label{eq:jump_quantization}
|u_j^+-u_j^-| = |d_j|\,\sigma,
\qquad
\sigma:=\Phi(2\pi)-\Phi(0).
\end{equation*}
A direct computation gives
\begin{equation*}\label{eq:sigma_value}
\sigma
=
\int_0^{2\pi} 2\sqrt{1-\cos\varphi}\,d\varphi
=
\int_0^{2\pi} 2\sqrt{2}\,|\sin(\varphi/2)|\,d\varphi
=
8\sqrt{2}.
\end{equation*}
Therefore,
\begin{equation}\label{eq:Du_total_variation}
|\partial_\parameter u|([0,\pq])=\sum_{j=1}^N |u_j^+-u_j^-|
=
8\sqrt{2}\sum_{j=1}^N |d_j|.
\end{equation}

From \eqref{eq:lsc_BV} and \eqref{eq:Du_total_variation} we get 
\begin{equation}\label{eq:sci_quasi}
\liminf_{\eps\to0} G_{\eps}(\gamma_{\eps})
\ge
8\sqrt{2}\sum_{j=1}^N |d_j|.
\end{equation}
Now recall \eqref{eq:theta_mu}, namely
\begin{equation*}
\frac{2\pi}{8\sqrt{2}}\partial_x(\Phi\circ\theta_\eps)\to w
\qquad \text{in the flat norm}.
\end{equation*}
On the other hand, the sequence $\partial_x(\Phi\circ\theta_\eps)$ converges weakly* as measures to $\partial_x u$, because $\Phi\circ\theta_\eps\to u$ in $L^1((0,\ex))$. Hence
\begin{equation*}
\frac{2\pi}{8\sqrt{2}}\partial_x u=w.
\end{equation*}
Combining this identity with the representation of $w$, we obtain
\begin{equation*}
\partial_x u
=
8\sqrt{2}\sum_{j=1}^N c_j\,\delta_{x_j}.
\end{equation*}
Therefore the integers $d_j$ in \eqref{eq:Du_total_variation} coincide with the multiplicities
$c_j$ of $w$, and we get
\begin{equation}\label{eq:variation-u-mu}
|\partial_x u|([0,\pq])
=
8\sqrt{2}\sum_{j=1}^N |c_j|
=
8\sqrt{2}\,G(\gamma,w).
\end{equation}

Finally, inserting \eqref{eq:variation-u-mu} into \eqref{eq:sci_quasi}, we conclude the $\Gamma$-liminf inequality.}


\subsection{\texorpdfstring{$\Gamma$-limsup inequality}{Gamma-limsup inequality}}\label{sec:up}
We begin by recalling some basic facts concerning the (planar) borderline elastica (see also \cite{Mi:20,MiRu:25}), which will be used in the construction of the recovery sequence.
\paragraph{Borderline elastica.}\label{par:borderline_elastica}
A prototypical arclength parametrization is given by
\begin{equation*}
\alpha_\eps(s)=(\alpha_{1,\eps}(s),\alpha_{2,\eps}(s)), 
\qquad s\in\mathbb{R},
\end{equation*}
with
\begin{equation*}
\alpha_{1,\eps}(s)
=
s-2\sqrt{2\eps}\,
\tanh\!\left(\frac{s}{\sqrt{2\eps}}\right),
\qquad
\alpha_{2,\eps}(s)
=
2\sqrt{2\eps}\,
\sech\!\left(\frac{s}{\sqrt{2\eps}}\right).
\end{equation*}
The lifting of $\alpha_\eps'$ is given by 
\begin{equation}\label{eq:angolo_tangente}
   \theta_\eps(s)=4\arctan (e^{s/\sqrt{2\eps}})
\end{equation}
and the curvature is
\begin{equation}\label{eq:curvatura}
\kappa_{\alpha_\eps}(s)
=
\partial_s\theta_\eps(s)
=
\frac{\sqrt{2}}{\sqrt{\eps}}\,
\sech\!\left(\frac{s}{\sqrt{2\eps}}\right)
=
\frac{4}{\sqrt{2\eps}}\frac{e^{s/\sqrt{2\eps}}}{1+e^{2s/\sqrt{2\eps}}}
.
\end{equation}
Note that $\kappa_{\alpha_\eps}> 0$. This satisfies $-\,\kappa_{\alpha_\eps} + \eps(2\partial^2_s\kappa_{\alpha_\eps} + \kappa_{\alpha_\eps}^3)=0$.

\nada{
\textcolor{red}{Da fare: il conto del gammalimsup per una singolarita
sola piazzata in zero, in mezzo all'intervallo $[p,q]$,
con $p=(-1,0)$ e $q=(1,0)$. Come farlo: prendere la funzione $\alpha_\eps$
sopra definita,
per $s \in (-s_0, s_0)$, in modo che 
${\alpha_\eps}_1(-s_0)=-1$,
${\alpha_\eps}_1(s_0)=1$. Questa funzione ha la seconda componente
non nulla: aggiungere una piccola traslazione verticale in modo che 
la seconda componente venga zero. Questa dovrebbe essere una recovery 
sequence per una singolarita' sola nel centro.}
\textcolor{red}{
Piu' problematico il conto con due o piu' singolarita', con molteplicita'
intera uno o piu' di uno (in valore assoluto). Qui 
sembrano necessari dei raccordi, un raccordo in una zona
opportuna tra le due singolarita', e un raccordo
tra una singolarita' e il bordo. Supponiamo ad esempio di avere solo 
due singolarita' (con molteplicita uno) che
le due singolarita' siano piazzate in $(-1/3,0)$ e $(1/3,0)$.
Prendiamo $\delta \in (0,1/6)$; mettiamo una copia di $\alpha_\eps$
sopra l'intervallo $(-1/3-\delta,-1/3+\delta)$ e una copia (traslando orizzontalmente e
centrando nel modo ovvio, ma forse non traslando verticalmente) sopra
l'intervallo $(1/3-\delta, 1/3+\delta)$. Il vettore tangente ad $\alpha_\eps$  al bordo di questi
due sottointervalli non e' orizzontale, ma quasi. Mettiamo un raccordo 
con degli archi di circonferenza per collegare tra loro le due 
$\alpha_\eps$, e collegarle al bordo di $[-1,1]$. Questi raccordi, per $\eps \to 0$,
dovrebbero pagare abbastanza poco da non distruggere il fatto
che queste dovrebbero essere recovery sequences. E il tutto
non dovrebbe dipendere dalla scelta di $\delta$.
Fatto per 
due singolarita con peso uno, basta per farlo con un numero finito
di singolarita con peso uno. Poi ci sono i pesi diversi da uno,
su questo ancora non so bene, ma si immaginano delle approssimanti
ottime con cappietti molto vicini che nel limite
tendono a sovrapporsi.
 Tutta questa costruzione
non andra' piu' bene per il secondo gamma limite, che
dovrebbe invece pesare la distanza tra le singolarita'.}
}

\paragraph{Key construction.}\label{par:key_construction}
\nada{Recall that $\pq:=|p-q|$ and $s$ denotes the arc-length parameter on $[0,\lgae]$ of $\gamma_\eps$. When we rescale $\gamma_\eps$ on $[0,\pq]$, without relabelling, so that $|\dot\gamma_\eps|=\frac{\lgae}{\pq}\geq 1$, we call $\parameter:=s \frac{\pq}{\lgae}$ the new variable.} 

Here we define the building block which will be used to obtain the recovery sequence. 

Let $\eps<<1$ and $\de$ be such that
\begin{equation}\label{eq:scelta_de}
   \de=\eps^{a} \text{ with } a\in(\tfrac14,\tfrac12). 
\end{equation}
 Consider the elastica 
\begin{equation*}
    \alpha_\eps(s)=(s-2\sqrt{2\eps}\,
\tanh\!\left(\frac{s}{\sqrt{2\eps}}\right),2\sqrt{2\eps}\,
\sech\!\left(\frac{s}{\sqrt{2\eps}}\right)), \qquad s\in[-\de,\de],
\end{equation*}
and denote with $\theta_\eps$ the lifting defined in \eqref{eq:angolo_tangente}. We aim to extend the elastica $\alpha_\eps$ beyond the interval $[-\de,\de]$ by attaching appropriate circular arcs at both endpoints.  
More precisely, on the left (respectively, right) side we prolong the curve by a circular arc $\raccordo$ with constant curvature equal to $\kappa_{\alpha_\eps}(-\de)$ (respectively, $\kappa_{\alpha_\eps}(\de)$).  
Each arc is continued up to the points $p_\eps$ and $q_\eps$, respectively, where the tangent vector becomes horizontal. Let us call $\keycon$ the resulting curve parametrized by arclength.

We focus on the construction of the circular arc attached to the left endpoint of the elastica.  
The right arc is obtained in a completely analogous way by symmetry and yields the same energetic contribution.
We begin by observing that the radius of the circle containing the arc is given by
\begin{equation}\label{eq:raggio}
r_\eps
=
\frac{1}{\kappa_{\alpha_\eps}(-\de)}
=
\frac{\sqrt{2\eps}}{4}
\frac{1+e^{-2\de/\sqrt{2\eps}}}{e^{-\de/\sqrt{2\eps}}},
\end{equation}
where we have used \eqref{eq:curvatura}.
Next, we consider the central angle of the arc.   
At both points $p_\eps$ and $\alpha_\eps(-\de)$, the radius of the circle is perpendicular to the tangent to the curve. In particular, at $p_\eps$ the tangent is horizontal, so the corresponding radius is vertical. At $\alpha_\eps(-\de)$, the tangent forms an angle $\theta_\eps(-\de)$ with the horizontal, and therefore the radius at this point forms the same angle $\theta_\eps(-\de)$ with the vertical.
Consequently, the central angle of the circular arc coincides with $\theta_\eps(-\de)$.
It then follows that the length of the circular arc is
\begin{equation}\label{eq:lunghezza_raccordo}
\ell(\raccordo) 
= r_\eps \, \theta_\eps(-\de)
= \sqrt{2\eps}
\frac{1+e^{-2\de/\sqrt{2\eps}}}{e^{-\de/\sqrt{2\eps}}}
\arctan (e^{-\de/\sqrt{2\eps}}),
\end{equation}
where we substituted the expressions for $r_\eps$ and $\theta_\eps(-\de)$ from \eqref{eq:raggio} and \eqref{eq:angolo_tangente}.

Moreover, by applying the chord theorem, we deduce that
\begin{equation*}
p_\eps = \bigl(\alpha_{1,\eps}(-\de) - r_\eps \, \sin\theta_\eps(-\de), \, y_{p_\eps}\bigr),
\qquad
q_\eps = \bigl(\alpha_{1,\eps}(\de) + r_\eps \, \sin\theta_\eps(\de), \, y_{q_\eps}\bigr),
\end{equation*}
for a suitable $y_{p_\eps}$ that we do not need to identify, 
and hence, 
by the symmetry of the construction,
we get
\begin{equation}\label{eq:peps-qeps}
    |p_\eps-q_\eps|=2\alpha_{1,\eps}(\de)+2r_\eps \, \sin\theta_\eps(\de)
    = 2\de-4\sqrt{2\eps}\,
\tanh\!\left(\frac{\de}{\sqrt{2\eps}}\right)+2r_\eps \, \sin\theta_\eps(\de).
\end{equation}

Now, recalling \eqref{eq:F_eps}, for each connecting arc $\raccordo$ we have 
\begin{equation}\label{eq:energia_arco}
    F_\eps(\raccordo)=\ell(\raccordo)+\eps\int_{\raccordo} \kappa_{\raccordo}^2 \,ds=\ell(\raccordo)+\eps\frac{\theta_\eps(-\de)}{r_\eps}.
\end{equation}
Clearly 
\begin{equation}\label{eq:lunghezza_ricciolo}
\ell(\alpha_\eps,[-\de,\de]) = \int_{-\de}^{\de} \sqrt{ (\alpha_{1,\eps}'(s))^2 + (\alpha_{2,\eps}'(s))^2 } \, ds= 2 \de.
\end{equation}
Furthermore, firstly using \eqref{eq:curvatura} and then performing the change of variable $u_\eps = s / \sqrt{2\eps}$ , we have 
\begin{equation}
\begin{aligned}\label{eq:eps_k_squared_ricciolo}
\eps \int_{-\de}^{\de} \kappa_{\alpha_\eps}^2 \, ds
&= 2 \int_{-\de}^{\de} \sech^2\Big(\frac{s}{\sqrt{2\eps}}\Big) ds
=2 \sqrt{2\eps} \int_{-\de/\sqrt{2\eps}}^{\de/\sqrt{2\eps}} \sech^2(u_\eps) \, du_\eps
\\
&
= 2 \sqrt{2\eps} \Big[ \tanh(u_\eps) \Big]_{-\de/\sqrt{2\eps}}^{\de/\sqrt{2\eps}} = 4 \sqrt{2\eps} \, \tanh\Big(\frac{\de}{\sqrt{2\eps}}\Big).
\end{aligned}
\end{equation}
From \eqref{eq:lunghezza_ricciolo} and \eqref{eq:eps_k_squared_ricciolo}, we obtain
\begin{equation}\label{eq:energia_alpha}
    F_\eps(\alpha_\eps,[-\de, \de])=
    2\de
    +
    4 \sqrt{2\eps} \, \tanh\Big(\frac{\de}{\sqrt{2\eps}}\Big).
\end{equation}
Therefore, combining \eqref{eq:energia_alpha}, \eqref{eq:energia_arco}, and \eqref{eq:peps-qeps}, we get
\begin{equation}\label{eq:energia_key_con}
\begin{aligned}
    G_\eps(\eta_\eps,[-\ell(\raccordo)-\de,\ell(\raccordo)+\de])&=
   \frac{2F_\eps(\raccordo)+F_\eps(\alpha_\eps,[-\de,\de])-|p_\eps-q_\eps|}{\eps^{1/2}}
    \\
    &
    = 
    \frac{2\ell(\raccordo)-2r_\eps \, \sin\theta_\eps(\de)}{\eps^{1/2}}
    +\sigma \, \tanh\Big(\frac{\de}{\sqrt{2\eps}}\Big)
    +2\eps^{1/2}\frac{\theta_\eps(-\de)}{r_\eps}
    \\
    &
    =:\textrm{I}_\eps+\textrm{II}_\eps+\textrm{III}_\eps.
\end{aligned}
\end{equation}
We want to pass to the limit as $\eps \to 0^+$
in \eqref{eq:energia_key_con}. Since we chose $\de$ as is \eqref{eq:scelta_de}, we immediately observe that 
\begin{equation*}\label{eq:limII}
    \lim_{\eps\to0^+} \textrm{II}_\eps=\sigma.
\end{equation*}
Moreover, we claim that both $\textrm{I}_\eps, \textrm{III}_\eps$ are negligible as $\eps \to 0^+$, i.e.
\begin{equation}\label{eq:limI}
   \lim_{\eps\to0^+} \textrm{I}_\eps=0,
\end{equation}
\begin{equation}\label{eq:limIII}
    \lim_{\eps\to0^+} \textrm{III}_\eps=0,
\end{equation}
so that, in conclusion,
\begin{equation}\label{eq:finale_energia}
    \lim_{\eps\to0^+} G_\eps(\keycon)=
    \lim_{\eps\to 0^+}(\textrm{I}_\eps+\textrm{II}_\eps+\textrm{III}_\eps)
    =\sigma.
\end{equation}
Concerning $\textrm{I}_\eps$, using the Taylor expansion $\sin(\theta)=\theta+o(\theta^2)$, we get
\begin{equation}\label{eq:I}
     \lim_{\eps\to0^+} \textrm{I}_\eps
     = \lim_{\eps\to0^+} \frac{2 r_\eps o(\theta^2_\eps(-\de))}{\eps^{1/2}}
     =  \lim_{\eps\to0^+} 2 r_\eps o(\theta_\eps(-\de)) \frac{o(\theta_\eps(-\de))}{\eps^{1/2}}.
\end{equation}
Let $$b=a-1/2<0.$$ Notice that by \eqref{eq:scelta_de}
\begin{equation}\label{eq:lim_e}
    \lim_{\eps\to 0^+} e^{-\eps^b/\sqrt{2}}=0.
\end{equation}
Substituting the expressions for $r_\eps$ and $\theta_\eps(-\de)$ from \eqref{eq:raggio} and \eqref{eq:angolo_tangente}, yields
\begin{equation}
\begin{aligned}\label{eq:I_bis}
\lim_{\eps\to0^+} r_\eps o(\theta_\eps(-\de))
& 
=\lim_{\eps\to 0^+} \sqrt{2\eps}
\frac{1+e^{-2\eps^b/\sqrt{2}}}{e^{-\eps^b/\sqrt{2}}} o(\arctan(e^{-\eps^b/\sqrt{2}}))
\\
&
=\lim_{\eps\to 0^+} \sqrt{2\eps}
\frac{1}{e^{-\eps^b/\sqrt{2}}} o(\arctan(e^{-\eps^b/\sqrt{2}}))
\\
&
=\lim_{\eps\to 0^+} \sqrt{2\eps}\frac{\arctan(e^{-\eps^b/\sqrt{2}})}{e^{-\eps^b/\sqrt{2}}}\frac{o(\arctan(e^{-\eps^b/\sqrt{2}}))}{\arctan(e^{-\eps^b/\sqrt{2}})}
\\
&=0
\end{aligned}
\end{equation}
and 
\begin{align}\label{eq:I_tris}
 \lim_{\eps\to0^+}  \frac{o(\theta_\eps(-\de))}{\eps^{1/2}}  
 \leq\lim_{\eps\to 0^+}\frac{o(\arctan(e^{-\eps^b/\sqrt{2}}))}{\arctan(e^{-\eps^b/\sqrt{2}})}\frac{e^{-\eps^b/\sqrt{2}}}{\eps^{1/2}}=0.
\end{align}
Hence, collecting \eqref{eq:I}, \eqref{eq:I_bis} and \eqref{eq:I_tris}, we get \eqref{eq:limI}.

Now, using \eqref{eq:angolo_tangente} and \eqref{eq:raggio}, we consider 
\begin{equation}\label{eq:III}
    \textrm{III}_\eps=\frac{32}{\sqrt{2}}\arctan(e^{-\eps^b/\sqrt{2}})\frac{e^{-\eps^b/\sqrt{2}}}{1+e^{-2\eps^b/\sqrt{2}}}.
\end{equation}
From \eqref{eq:lim_e} we have
\begin{equation}\label{eq:III_bis}
    \lim_{\eps\to 0^+} \arctan(e^{-\eps^b/\sqrt{2}})=0 \quad \text{ and } \quad \lim_{\eps\to 0^+} \frac{e^{-\eps^b/\sqrt{2}}}{1+e^{-2\eps^b/\sqrt{2}}}=0.
\end{equation}
Thus, from \eqref{eq:III} and \eqref{eq:III_bis}, we obtain \eqref{eq:limIII}.

\begin{Remark}\rm
The constant $8\sqrt{2}$ 
obtained as a result of the limit in \eqref{eq:energia_key_con}, 
can be compared with the value arising from a simpler construction 
consisting of a straight segment connecting $p$ and $q$, 
together with a circular loop of 
radius $\sqrt{\eps}$ attached to it. In this case, the total length is
\begin{equation*}
\ell(\gamma_\eps) = \vert p-q\vert + 2\pi \sqrt{\eps},
\end{equation*}
while the curvature contribution comes only from the circle and equals
\begin{equation*}
\eps \int_{\partial B_{\sqrt{\eps}}} \kappa_{\gae}^2 ds 
= \eps \frac{2\pi \sqrt{\eps}}{(\sqrt{\eps})^2}
= 2\pi \sqrt{\eps}.
\end{equation*}
Therefore,
\begin{equation*}
F_\eps(\gamma_\eps) = \vert p-q\vert+ 4\pi \sqrt{\eps},
\qquad
G_\eps(\gamma_\eps) = 4\pi.
\end{equation*}
Since $4\pi > 8\sqrt{2}$, this shows that $\keycon$ provides a more efficient recovery sequence.
\end{Remark}
\paragraph{Proof of the $\Gamma$-limsup inequality.}
We now prove the $\Gamma$-limsup inequality of Theorem \ref{thm:main_result}.

Let $(\Ga,\omega)\in \mathrm{Dom}_G$. By definition, $\Ga$ is the segment
$$
\Ga(x)=p+\frac{x}{\ex}(q-p)\qquad \text{for all }x\in[0,\ex],
$$
and
$$
\omega=2\pi\sum_{j=1}^N c_j\delta_{x_j},
\qquad c_j\in\mathbb Z,\quad 0\le x_1<\dots<x_N\le \ex.
$$
Set for simplicity
$$
M:=\sum_{j=1}^N |c_j|=G(\Ga,\omega)\geq N.
$$
We may rewrite $\omega$ as
$$
\omega=2\pi\sum_{h=1}^M \alpha_h\delta_{y_h},
$$
where $\alpha_h\in\{-1,1\}$, $y_h\in\{x_1,\dots,x_N\}$ and
$$
y_1\le y_2\le \dots\le y_M.
$$

Fix $a\in(\frac14,\frac12)$ and let $\delta_\eps$ be as in \eqref{eq:scelta_de}. For $\eps>0$ sufficiently small, we choose points
$$
\delta_\eps< y_1^\eps<y_2^\eps<\dots<y_M^\eps<\ex-\delta_\eps
$$
such that 
$$
y_{h+1}^\eps-y_h^\eps\ge 2\delta_\eps
\qquad \text{for every }h=1,\dots,M-1,
$$
and
$$
y_h^\eps\to y_h \qquad \text{as }\eps\to 0^+ \quad \text{for every }h=1,\dots,M.
$$

For each $h=1,\dots,M$, let $\eta_{\eps,h}$ denote the curve introduced in Paragraph~\ref{par:key_construction}.
If $\alpha_h=1$ we use the profile $\eta_\eps$, while if $\alpha_h=-1$ we use its reflection with respect to the $x$-axis, so that its curvature has opposite sign. In both cases the corresponding curve is of class $H^2$, has horizontal tangent at its endpoints, and, in view of \eqref{eq:finale_energia}, satisfies
$$
\lim_{\eps\to0^+} G_\eps(\eta_{\eps,h},[-\ell(\varsigma_{\eps})-\de,\ell(\raccordo)+\de])=\sigma.
$$

We now define a recovery sequence
$\Ga_\eps \in H^2([0,\pq];\mathbb{R}^2)$ as
\begin{equation*}
\Ga_\eps(\parameter):=
\begin{cases}
\eta_{\eps,h}\!\left(\dfrac{\parameter-y^\eps_h}{\pq}\,\lgae\right)
& \text{if } \parameter\in\left(y^\eps_h - \dfrac{\de}{\lgae}\pq,\; y^\eps_h + \dfrac{\de}{\lgae}\pq\right),\quad h=1,\dots,M,
\\[6pt]
\Ga(\parameter)
& \text{elsewhere}.
\end{cases}
\end{equation*}

Namely, in small intervals centered at $y^\eps_h$, we replace the segment $\Ga$ by suitably rescaled copies of $\eta_{\eps,h}$.

By construction, $\Ga_\eps(0)=p$, $\Ga_\eps(\pq)=q$. Moreover
\begin{equation}\label{eq:recovery_limit}
\lim_{\eps\to 0^+}
\bigl\| \kappa_{\Ga_\eps} - 2\pi\sum_{h=1}^M \alpha_h\delta_{y_h} \bigr\|_{\mathrm{flat},[0,\ex]} = 0.
\end{equation}

Indeed, following the same strategy as in Proposition~\ref{lem:compactness_in_flat_norm}, we fix $\radius>0$ and consider the disjoint intervals $(a_{\eps,\radius}^i,b_{\eps,\radius}^i)$, $i=1,\dots,M$, such that
$$
E_\radius^\eps
=
\bigcup_{i=1}^M (a_{\eps,\radius}^i,b_{\eps,\radius}^i).
$$
By construction of $\Ga_\eps$ and by definition of $E_\radius^\eps$ (see~\eqref{eq:E^eps_delta}), for each $i\in\{1,\cdots,M\}$ there exists $h$ such that
\begin{equation}\label{eq:ai}
a_{\eps,\radius}^i
\in
\left(
y^\eps_h - \frac{\de}{\lgae}\pq,\;
y^\eps_h + \frac{\de}{\lgae}\pq
\right).
\end{equation}
Since $\frac{\pq}{\lgae}\to 1$ and $y^\eps_h\to y_h$, it follows that, for any fixed $\radius$,
\begin{equation}\label{eq:ai_to_xi}
a_{\eps,\radius}^i \to y_h
\qquad \text{as }\eps\to 0^+.
\end{equation}
Combining this with Proposition~\ref{lem:compactness_in_flat_norm}, we obtain \eqref{eq:recovery_limit}.

It remains to show that 
$$
\limsup_{\eps\to 0^+} G_\eps(\Ga_\eps)
\leq 
\sigma G(\Ga,\omega).
$$
Since outside the intervals $\left(y^\eps_h - \frac{\de}{\lgae}\pq,\; y^\eps_h + \frac{\de}{\lgae}\pq\right)$ the curve is a straight segment, only the $M$ building blocks contribute to the energy. Therefore,
$$
\limsup_{\eps\to 0^+} G_\eps(\Ga_\eps)
\le \sum_{h=1}^M \limsup_{\eps\to 0^+} G_\eps(\eta_{\eps,h})
= \sigma M
= \sigma G(\Ga,\omega).
$$
This concludes the proof.
\section{The energy functionals in case of closed curves}\label{sec:energy_on_Xl}
We now turn our attention to the second problem concerning closed immersed plane curves.
For convenience, we begin by stating the following definitions. Let $\ell>0$.

\medskip

The class $\mathcal{B}_1$ is here defined as in Definition \ref{def:B_c} with $\ell$ replacing $\ex$. 

\begin{Definition}
Let $\ga\in\mathcal{B}_1$. We define the set of measures compatible with $\ga$ as 
\begin{equation*}
    \mathcal{K}(\ga):=\{\mu\in\mathcal{M}_b([0,\ell]):\mu=\partial_s\argf_{\ga}+\omega \text{ with } \omega\in M_{\text{fin},\mathbb Z}([0,\ell])\}.
\end{equation*}
\end{Definition}

\begin{Definition}
    A pair $(\ga,\mu)$ with $\ga\in\mathcal{B}_1$ and $\mu\in\mathcal{K}(\ga)$ is called a pointed curve.
\end{Definition}

In the present setting of closed curves, the test functions in the definition of the flat norm are required to be periodic. Namely, given a Radon measure $\mu$, we define
\begin{equation*}
\|\mu\|_{\text{flat},[0,\ell]} := \sup_{\substack{\varphi \in C^{0,1}([0,\ell]), \\ \|\varphi\|_{C^{0,1}([0,\ell])} \le 1 \\ \varphi(0)=\varphi(\ell)}}
\int_{[0,\ell]} \varphi \, d\mu \
.
\end{equation*}

The definition of convergence of a sequence of pointed curves $(\ga_n,\mu_n)$ to a pointed curve $(\ga,\mu)$ is as in Definition \ref{def:conv_B} with $\ex$ replaced by $\ell$.

\medskip

Let $\mathcal{T}=\{(\ga,\mu): \ga\in\mathcal{B}_1,
\mu\in\mathcal{K}(\ga) \}$ and
define 
$$X_\ell = 
\{
(\ga,\mu)\in\mathcal{T}:\ga \in H^2([0,\ell];\mathbb{R}^2), \ga(0)=\ga(\ell),\dot\ga(0)=\dot\ga(\ell), \ell(\ga)=\ell, \mu=\kappa_\ga
\},$$
namely, $\ga$ is a closed planar curve of fixed length $\ell$.

\nada{
Let $F_\eps:\mathcal{T}\to (-\infty,+\infty]$ defined as
\begin{equation*}
F_\eps(\ga,\mu)
=\begin{cases}
 \ell(\ga)+\eps\displaystyle\int_0^\ell \kappa_\ga^2 \,ds &
\text{ if }
(\ga,\mu) \in X_\ell, 
\\
+\infty & \text{ if } (\ga,\mu)\in\mathcal{T}\setminus X_\ell. 
\end{cases}
\end{equation*}
}
\begin{Definition}
We define by
$\rm Dom_G$ 
the set of all pairs $(\ga,\mu) \in\mathcal{T}$ such that
$\ga$ is a closed planar curve in $\mathcal B_1$ of length $\ell$.
\end{Definition}
An element of $\mathrm{Dom}_G$ will be called a pointed $\ell$-closed curve.

\begin{Definition}
We define $G:\mathcal{T} \to
[0, +\infty]$ 
the functional 
\begin{equation*}
G(\ga,\mu) := \begin{cases}
\sum_{j=1}^N \vert c_j\vert  & {\rm if }~ 
(\ga,\mu) \in {\rm Dom_G},
\\
+ \infty & {\rm if }~  \ga\in\mathcal{T}\setminus{\rm Dom_G}.
\end{cases}
\end{equation*}
\end{Definition}

\section{\texorpdfstring{$\Gamma$-limit for closed curves}{Gamma-limit for closed curves}}\label{sec:first_oder_exp_Xl}
\nada{
\textcolor{red}{Dire (forse nell'introduzione) che in questo caso il risultato non è esattamente di espansione in $\Gamma$ convergenza, ma un teorema del tipo: per ogni
successione che converge al cerchio in ..., vale la diseguaglianza del gammaliminf, ed esiste una
recovery sequence. 
}
Set 
$$
\begin{aligned}
G_\eps(\ga) := &
\frac{F_\eps(\ga)
- 
\min_{\ga\in X_{\ell}} F(\ga)}{\eps^{1/2}}
=
\frac{F_\eps(\ga)
- \ell
}{\eps^{1/2}}
=\eps^{1/2}\displaystyle\int_0^\ell \kappa_\ga^2 \,ds
\qquad (\ga,\kappa_\ga) \in 
{X_\ell}.
\end{aligned}
$$
}

Let $\ga\in C^2([0,\ell];\mathbb{R}^2)\cap \mathcal B_1$ be a closed curve of length $\ell$, with $\dot\ga(0)=\dot\ga(\ell)$ and $\ddot\ga(0)=\ddot\ga(\ell)$. Let $\theta=\theta_\ga\in C^0([0,\ell];\mathbb{R}^2)$ be a lifting of $\partial_s \ga$ and let $\kappa_\ga$ denote the curvature of $\ga$, i.e. $\kappa_\ga = \partial_s \theta$.

Set 
$$
\begin{aligned}
\mathcal{G}_\eps(\gae) 
:=\eps^{1/2}\displaystyle\int_0^\ell \kae^2 \,ds +\frac{1}{2\eps^{1/2}}\int_0^\ell |\partial_s\gae-\partial_s\ga|^2 \,ds
\qquad (\gae,\kae) \in {X_\ell}.
\end{aligned}
$$
A motivation for the second term in $\mathcal{G}_\eps$ is given by \eqref{eq:rate_conv}.

We focus on sequences $(\gae,\kae)$ in $X_\ell$ such that 
\begin{equation*}
    \mathcal{G}_\eps(\gae)\leq C \qquad \eps\in(0,1],
\end{equation*}
for some $C>0$,
which implies 
\begin{equation}\label{eq:closed_bounds}
 \eps^{1/2}\displaystyle\int_0^\ell \kae^2 \,ds\leq C,
 \qquad
 \int_0^\ell |\partial_s\gae-\partial_s\ga|^2 \,ds\leq C\eps^{1/2}.
\end{equation}
Notice that the second condition implies that $$
\gae\to\ga \text{ in } H^1([0,\ell];\mathbb{R}^2).$$

\begin{Remark}\label{rmk:Modica-Mortola}\rm
As in the case of open curves,
the functional $G_\eps$ has the structure of a one-dimensional
Modica--Mortola type energy.
Let $(\gamma_\eps,\kappa_{\gamma_\eps})\in X_\ell$, and let
$\theta_\eps:[0,\ell]\to\mathbb R$ be a lifting of the tangent vector of
$\gamma_\eps$, namely
$$
\partial_s\gamma_\eps(s)=\bigl(\cos\theta_\eps(s),\sin\theta_\eps(s)\bigr)
\qquad s\in[0,\ell].
$$
Since $\gamma_\eps\in H^2([0,\ell];\mathbb R^2)$ and $|\partial_s\gamma_\eps|=1$,
we also have
$
\kappa_{\gamma_\eps}=\partial_s\theta_\eps.
$
We now consider the second term in the definition of $\mathcal{G}_\eps$. Notice that
\begin{equation*}
\partial_s\gamma_\eps(s)-\partial_s\gamma(s)
=
\bigl(\cos\theta_\eps(s)-\cos\theta(s),\,
\sin\theta_\eps(s)-\sin\theta(s)\bigr),
\end{equation*}
and thus
\begin{align*}
|\partial_s\gamma_\eps(s)-\partial_s\gamma(s)|^2
&=
1+1-2\bigl(\cos\theta_\eps\cos\theta+\sin\theta_\eps\sin\theta\bigr)
\\
&=
2-2\bigl(\cos\theta_\eps\cos\theta+\sin\theta_\eps\sin\theta\bigr)
=
2\bigl(1-\cos(\theta_\eps(s)-\theta(s))\bigr).
\end{align*}

Substituting this identity into the definition of $\mathcal{G}_\eps$, we get
\begin{align*}
\mathcal{G}_\eps(\gamma_\eps)
=
\eps^{1/2}\int_0^\ell (\partial_s\theta_\eps)^2\,ds
+
\frac{1}{\eps^{1/2}}\int_0^\ell
\bigl(1-\cos(\theta_\eps-\theta)\bigr)\,ds.
\end{align*}
\end{Remark}

Our second main theorem reads as follows.
\begin{Theorem}[Compactness and $\Gamma$-convergence]\label{thm:main_result_2}
We have:
\begin{itemize}
\item[\textnormal{(i)}] Compactness and $\Gamma$-liminf inequality: if $\big((\ga_\eps,\kappa_{\ga_\eps})\big)\subset X_\ell$ is a sequence such that 
\eqref{eq:closed_bounds} holds, then,
up to a subsequence, there exist $\mu\in \mathcal{K}(\ga)$ such that $\big((\ga_\eps,\kappa_{\ga_\eps})\big)$ converges to $(\ga,\mu)$. Furthermore, if $\big((\ga_\eps,\kappa_{\ga_\eps})\big)$ converges to $(\ga,\mu)$ then 
%
$$
\liminf_{\eps\to 0^+} \mathcal{G}_\eps(\ga_\eps)
\geq 
\sigma G(\ga,\mu).
$$
\item[\textnormal{(ii)}] $\Gamma$-limsup inequality: 
for every $(\ga,\mu)\in \rm Dom_G$
there exists a sequence $\big((\ga_\eps,\kappa_{\ga_\eps})\big)\subset X_\ell$ converging to $(\ga,\mu)$ such that
$$
\lim_{\eps\to 0^+} \mathcal{G}_\eps(\ga_\eps)
=
\sigma G(\ga,\mu).
$$
\end{itemize}
\end{Theorem}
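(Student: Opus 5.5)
The proof follows the three-part scheme of Theorem~\ref{thm:main_result}, now applied to the relative angle $\psi_\eps:=\theta_\eps-\theta$. Here $\theta$ is the $C^1$ lifting of $\partial_s\ga$, and $\theta_\eps(0)$ is chosen so that $|\psi_\eps(0)|\le\pi$. By Remark~\ref{rmk:Modica-Mortola} we have
$$\mathcal G_\eps(\ga_\eps)=\int_0^\ell\Big(\eps^{1/2}(\partial_s\psi_\eps+\kappa_\ga)^2+\eps^{-1/2}(1-\cos\psi_\eps)\Big)ds.$$
The cross term and the $\kappa_\ga^2$ term are controlled by $\eps^{1/2}\|\kappa_\ga\|_\infty\big(\int|\partial_s\theta_\eps|+\ell\|\kappa_\ga\|_\infty\big)$. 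The quantity $\int|\partial_s\theta_\eps|\le \ell^{1/2}\|\kappa_{\ga_\eps}\|_{L^2}\le C\eps^{-1/4}$ by \eqref{eq:closed_bounds}, so both terms are $O(\eps^{1/4})\to0$. Hence, up to $o(1)$, $\mathcal G_\eps$ is the Modica--Mortola energy of $\psi_\eps$ with potential $1-\cos\psi$ on the fixed interval $[0,\ell]$.

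\textbf{Compactness.} $H^1$ convergence of $\ga_\eps$ to $\ga$ follows directly from the second bound in \eqref{eq:closed_bounds}. For the curvature I repeat the proof of Proposition~\ref{lem:compactness_in_flat_norm}. The set $E^\eps_\radius$ is replaced by $\{s:|\partial_s\ga_\eps-\partial_s\ga|>\radius\}$, equivalently $\{\operatorname{dist}(\psi_\eps,2\pi\mathbb Z)>c(\radius)\}$. Its measure is at most $C\eps^{1/2}/\radius^2$, and $\int_{E_\radius^\eps}|\kappa_{\ga_\eps}|\le C/\radius$ as in \eqref{eq:kx_uniformly}. The connected components of $E_\radius^\eps$ are intervals. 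I treat $[0,\ell]$ as a circle, using periodic test functions and the fact that $\psi_\eps(\ell)-\psi_\eps(0)$ is a multiple of $2\pi$ because both curves are closed. Steps 1--3 then carry over with $\psi_\eps$ in place of $\theta_\eps$.

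This shows that $\partial_s\psi_\eps$ converges in the flat norm to some $\omega\in M_{{\rm fin},\mathbb Z}$. Since $\partial_s\theta=\kappa_\ga$ is a fixed $L^\infty$ function, $\kappa_{\ga_\eps}=\kappa_\ga+\partial_s\psi_\eps\to\kappa_\ga+\omega=:\mu\in\mathcal K(\ga)$. Two points need checking in Step 3. First, off $E_\radius^\eps$ the oscillation of $\psi_\eps$ within one complementary interval is $O(\radius)$. Second, $\psi_\eps$ is within $O(\radius)$ of a constant in $2\pi\mathbb Z$ there, which kills the boundary terms. I also need Lemma~\ref{lem:theta_w} in the form $\frac{2\pi}{\sigma}\partial_s(\Phi\circ\psi_\eps)-\partial_s\psi_\eps\to0$ in the flat norm; its proof is unchanged, since it only uses $\|g(\psi_\eps)\|_{L^1}\to0$.

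\textbf{Liminf.} I apply the Modica--Mortola trick to the main part of the energy:
$$\mathcal G_\eps(\ga_\eps)\ge \int_0^\ell 2|\partial_s\psi_\eps|\sqrt{1-\cos\psi_\eps}\,ds-o(1)=|\partial_s(\Phi\circ\psi_\eps)|([0,\ell])-o(1).$$
Because $|\psi_\eps(0)|\le\pi$, the functions $\Phi\circ\psi_\eps$ are bounded in $BV$. A subsequence converges in $L^1$ to some $u$ with $\partial_s u=\frac{\sigma}{2\pi}\omega$. Lower semicontinuity of total variation then gives $\liminf\mathcal G_\eps\ge\sigma\sum|c_j|=\sigma G(\ga,\mu)$.

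A subtlety is that the flat norm uses periodic test functions, so the limit cannot see a constant added to $\omega$. This only affects mass at the identified endpoint $0\sim\ell$, and is resolved by computing the variation on the circle, where a jump at the endpoint is counted once.

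\textbf{Limsup (main obstacle).} Write $\omega=2\pi\sum_h\alpha_h\delta_{y_h}$ with $\alpha_h=\pm1$ and $M=\sum|c_j|$ atoms. Define $\psi_\eps:=\sum_h\alpha_h\,\theta^{\rm bl}_\eps(s-y_h^\eps)$, where $\theta^{\rm bl}_\eps(s)=4\arctan(e^{s/\sqrt{2\eps}})$ is the borderline-elastica profile \eqref{eq:angolo_tangente}, cut off to be exactly $0$ or $2\pi$ outside windows of size $\de$ with $\de$ as in \eqref{eq:scelta_de}. The centres $y_h^\eps$ are separated points converging to $y_h$. Set $\theta_\eps:=\theta+\psi_\eps$. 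As in \eqref{eq:energia_key_con}, this costs $\sigma M+o(1)$: the cutoff errors are exponentially small in $\de/\sqrt\eps$, and the perturbation by $\kappa_\ga$ is negligible by the estimate above.

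The difficulty is that the curve $\ga_\eps(s)=\ga(0)+\int_0^s e^{i\theta_\eps}$ must close up. Its tangent already matches at $0$ and $\ell$, but the gap is $z_\eps:=\int_0^\ell(e^{i\theta_\eps}-e^{i\theta})\,ds$, with $|z_\eps|=O(M\sqrt\eps)$. Since $|z_\eps|$ is comparable to $\eps^{1/2}$ and not $o(\eps^{1/2})$, it cannot simply be ignored. I will correct it by adding a small smooth perturbation $\theta_\eps\mapsto\theta_\eps+\lambda_1\zeta_1+\lambda_2\zeta_2$, where $\zeta_1,\zeta_2$ are fixed smooth functions supported away from the $y_h$ and vanishing at $0$ and $\ell$. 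They are chosen so that the map $\lambda\mapsto\int_0^\ell e^{i(\theta+\lambda\cdot\zeta)}\,ds$ has invertible differential $\big(\int ie^{i\theta}\zeta_k\big)_k$ at $\lambda=0$. This is possible because $e^{i\theta}$ is not contained in a real line, $\ga$ being a closed curve. The implicit function theorem then gives $|\lambda|=O(\sqrt\eps)$. The added cost is $\eps^{1/2}\int|\partial_s(\lambda\cdot\zeta)|^2$ from the curvature term, which is $O(\eps^{3/2})$, plus $\eps^{-1/2}\int(1-\cos(\lambda\cdot\zeta))$, which is $O(\eps^{-1/2}|\lambda|^2)=O(\eps^{1/2})\to0$. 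The length constraint holds automatically because the curve is parametrized by arclength on $[0,\ell]$. Finally, $\kappa_{\ga_\eps}\to\kappa_\ga+\omega$ in the flat norm follows exactly as in \eqref{eq:recovery_limit}.
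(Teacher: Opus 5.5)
Your proposal is correct. For compactness and the liminf it follows the paper: the relative angle $\psi_\varepsilon=\theta_\varepsilon-\theta$ (the paper's $\phi_\varepsilon$), the same decomposition of the bad set into components, and the Modica--Mortola trick. You add two small improvements. First, the normalization $|\psi_\varepsilon(0)|\le\pi$ supplies the $BV$ bound on $\Phi\circ\psi_\varepsilon$ that the paper uses tacitly. Second, your remark on periodic test functions is a genuine point. Such test functions cannot separate $\delta_0$ from $\delta_\ell$; for example, $\gamma_\varepsilon\equiv\gamma$ converges to $(\gamma,\kappa_\gamma+2\pi\delta_0-2\pi\delta_\ell)$ with vanishing energy. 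So the liminf only holds if $\omega$ is read as a measure on $\mathbb R/\ell\mathbb Z$, as you propose.

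The limsup is where you genuinely depart from the paper. The paper's recovery sequence works as follows:
\begin{itemize}
\item It inserts a rotated copy of the key construction (elastica plus circular arcs) on top of $\gamma$ at $s_1$, which displaces the endpoint by $O(\delta_\varepsilon)$.
\item It compensates with a straight segment at a point with opposite tangent. This uses Lemma~\ref{lem:opposite_tangent_point}, which relies on rotation index one.
\item It restores the length by a homothety. The resulting $O(\delta_\varepsilon^2)$ tangent error is why $a>\frac14$ is needed in \eqref{eq:scelta_de}.
\end{itemize}
You instead modify the angle in place on $[0,\ell]$. The length is then $\ell$ automatically, and the closure defect is only $O(\sqrt\varepsilon)$. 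A two-parameter implicit-function correction of size $O(\sqrt\varepsilon)$ removes it at cost $O(\varepsilon^{1/2})$. Your route avoids the antipodal tangent point, any rotation-index or embeddedness assumption, the homothety, and the lower bound on $a$ beyond $\delta_\varepsilon/\sqrt\varepsilon\to\infty$. It also handles multiplicities and signs of the $c_j$ explicitly, which the paper's general case leaves implicit. In exchange, the paper gives an explicit curve assembled from the open-curve building block.

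One detail to add. The matrix $\big(\int_0^\ell ie^{i\theta}\zeta_k\,ds\big)_k$ is invertible only if the normals on the supports of the $\zeta_k$ span $\mathbb R^2$. So place those supports in an interval where $\kappa_\gamma\neq0$ that avoids the finitely many $y_h$. Such an interval exists because $\theta$ is continuous and cannot be constant, since $\int_0^\ell e^{i\theta}\,ds=0$.
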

\nada{
\begin{Remark}\rm
The assumption \eqref{eq:necessaria} is needed to prevent
the first order $\Gamma$--limit of $G_\eps$ from becoming trivial.
For example, let us consider $\gae$ as follows.
Let $\gamma$ be the a circle parametrized by arclength on $[0,\ell]$.
Fix two parameters $\delta_\eps$ and $m_\eps$.
We modify $\gamma$ by adding
a small circular loop of radius $\de$ and multiplicity $m_\eps$. Denote by $c_\eps$ this loop and by $\gae$ the resulting curve. 
The excess length is compensated by a slight contraction of $\gamma$.
By direct computation
$$
G(c_\eps)
=
\eps^{1/2}\, m_\eps\,\frac{2\pi}{\de}.
$$
Choosing for instance
$
\de=\eps^{1/4},
m_\eps=\eps^{-1/8},
$
we obtain
$$
G(c_\eps)
=
C
\eps^{1/2}\,\frac{\eps^{-1/8}}{\eps^{1/4}}
=
\eps^{1/8},
$$
which hence tends to $0$.
Since also 
$\lim_{\eps\to 0} G(\gae\setminus c_\eps)=0$, we conclude that
\begin{equation}\label{eq:triviale}
    \lim_{\eps\to 0}G_\eps(\gae)=0.
\end{equation}
Notice that \eqref{eq:necessaria} cannot hold. Indeed, if it were satisfied, then \eqref{eq:kx_uniformly} would follow, contradicting \eqref{eq:triviale}.
\end{Remark}
}

The proof of Theorem \ref{thm:main_result_2} is split across Sections \ref{sec:compactness_2}, \ref{sec:lb_2}, and \ref{sec:up_2}.
\subsection{The compactness result}\label{sec:compactness_2}
In this section we prove the compactness part of Theorem~\ref{thm:main_result_2}. 
To this end, let $\big((\gamma_\eps,\kappa_{\gamma_\eps})\big)\subset X_\ell$ be a sequence such that \eqref{eq:closed_bounds} holds.
Define 
\begin{equation}\label{eq:phi_eps}
\phi_\eps:=\theta_\eps-\theta.
\end{equation}
Then
\begin{equation*}\label{eq:phi_prime_closed}
\partial_s\phi_\eps=\kappa_{\gamma_\eps}-\kappa_\gamma.
\end{equation*}
Fix $\radius\in(0,\frac12)$. We define 
\begin{equation}\label{eq:E_rho_closed}
E_\radius^\eps
:=
\left\{
s\in[0,\ell]:
|\partial_s\gamma_\eps(s)-\partial_s\gamma(s)|>\radius
\right\}.
\end{equation}
The second bound in \eqref{eq:closed_bounds} yields
\begin{equation}\label{eq:size_bad_set_closed}
|E_\radius^\eps|\,\radius^2
\le
\int_{E_\radius^\eps}|\partial_s\gamma_\eps-\partial_s\gamma|^2\,ds
\le
\int_0^\ell |\partial_s\gamma_\eps-\partial_s\gamma|^2\,ds
\le C\eps^{1/2}.
\end{equation}
Hence, arguing as in \eqref{eq:kx_uniformly} and using the first bound in \eqref{eq:closed_bounds},
\begin{align}
\int_{E_\radius^\eps} |\kappa_{\gamma_\eps}|\,ds
\le
\frac{C}{\radius}.
\label{eq:kappa_bad_set_closed}
\end{align}
 
Using \eqref{eq:kappa_bad_set_closed}, Cauchy--Schwarz inequality, $\kappa_\gamma\in L^2([0,\ell])$ and \eqref{eq:size_bad_set_closed} we also get
\begin{equation}\label{eq:phi_prime_bad_set_estimate}
\begin{aligned}
\int_{E_\radius^\epsilon} |\partial_s\phi_\epsilon|\,ds
\le
\int_{E_\radius^\epsilon} |\kappa_{\gamma_\epsilon}|\,ds
+
\int_{E_\radius^\epsilon} |\kappa_\gamma|\,ds 
\le
\frac{C}{\radius}
+
\|\kappa_\gamma\|_{L^2([0,\ell])}\,|E_\radius^\epsilon|^{1/2} \le
\frac{C}{\radius}
+
C\,\frac{\epsilon^{1/4}}{\radius}.
\end{aligned}
\end{equation}

\begin{Proposition}\label{lem:compactness_closed_flat}
Assume that \eqref{eq:closed_bounds} holds. Then there exist a subsequence $\eps_k$ and  $\omega\in M_{\rm fin,\mathbb Z}([0,\ell])$ such that
\begin{equation}\label{eq:flat_conv_closed}
\lim_{k\to +\infty}
\bigl\|
(\kappa_{\gamma_{\eps_k}}-\kappa_\gamma)-\omega
\bigr\|_{\mathrm{flat}}
=0.
\end{equation}
Equivalently,
\[
\kappa_{\gamma_{\eps_k}}\to \kappa_\gamma\,ds+\omega
\qquad\text{in the flat norm.}
\]
\end{Proposition}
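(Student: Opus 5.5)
We follow the proof of Proposition~\ref{lem:compactness_in_flat_norm}, with $\phi_\eps=\theta_\eps-\theta$ in place of $\theta_\eps$ and the relative tangent $e^{i\phi_\eps}$ in place of $\dot\Ga_\eps$. By Remark~\ref{rmk:Modica-Mortola}, $|\partial_s\gae-\partial_s\ga|^2=2(1-\cos\phi_\eps)=|e^{i\phi_\eps}-1|^2$, so $E^\eps_\radius=\{s:|e^{i\phi_\eps(s)}-1|>\radius\}$. This is an open subset of the circle $[0,\ell]/\{0\sim\ell\}$ (periodicity of $\gae,\ga$ and $C^1$ regularity), hence a countable union of disjoint open arcs $(a^i,b^i)$; if $E^\eps_\radius$ is the whole circle, then \eqref{eq:size_bad_set_closed} is contradicted for small $\eps$. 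The circle now is fixed, $\partial B_1(0)\setminus\overline{B_\radius(1)}$, with endpoints $R_\radius,S_\radius$ independent of $\eps$.

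\emph{Step 1: one arc.} On each $(a^i,b^i)$ the map $e^{i\phi_\eps}$ lies in a simply connected arc, so a primitive $\tau$ of $\partial_s\phi_\eps$ normalised to have minimum $0$ satisfies $0\le\tau\le2\pi$. Integrating by parts against a periodic Lipschitz $\varphi$, exactly as in \eqref{eq:duality_pairing}--\eqref{pi+}, gives the bound $C\|\dot\varphi\|_\infty(b^i-a^i)$ when $e^{i\phi_\eps}(a^i)=e^{i\phi_\eps}(b^i)$, and the bound $\bigl|\int_{a^i}^{b^i}\partial_s\phi_\eps\varphi-(\pm2\pi)\varphi(a^i)\bigr|\le C\|\dot\varphi\|_\infty(b^i-a^i)+Cd_\radius$ when the arc crosses from $R_\radius$ to $S_\radius$ or vice versa, with $d_\radius=o_\radius(1)$. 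The factor $\ell/\ell(\gae)$ is now absent, since we work in arclength with fixed length $\ell$.

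\emph{Step 2: counting the crossing arcs.} Each crossing arc carries $\int|\partial_s\phi_\eps|\ge2\pi-d_\radius\ge\pi$. By \eqref{eq:phi_prime_bad_set_estimate}, their number satisfies $N_{\eps,\radius}\le C/\radius$. Monotonicity in $\radius$ follows by continuity as before: a crossing for $\radius'$ contains a crossing for $\radius''>\radius'$. Hence $N_{\eps,\radius}\le\widehat C$ for $\radius<1/8$, and along a subsequence $N_{\eps,\radius}=N$ is constant. Set $\omega_{\eps,\radius}=2\pi\sum_{i\le N}\alpha_i\delta_{a^i}$. Then the total contribution of $E^\eps_\radius$ to $\langle\partial_s\phi_\eps-\omega_{\eps,\radius},\varphi\rangle$ is at most $C|E^\eps_\radius|+CNd_\radius\le C\eps^{1/2}\radius^{-2}+o_\radius(1)$.

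\emph{Step 3: the complement.} The complement of the $N$ crossing arcs consists of $N$ closed arcs of the circle; if $N=0$ it is all of $[0,\ell]$. On each such arc $[b^{i-1},a^i]$, the endpoints satisfy $|e^{i\phi_\eps}-1|\le\radius$ and the winding across non-crossing bad arcs returns to the same point, so $|\phi_\eps(a^i)-\phi_\eps(b^{i-1})|\le C\radius$. Choosing the primitive to vanish at $b^{i-1}$, we would like $|\tau|\le C\radius$ off $E^\eps_\radius$, and $|\tau|\le2\pi+C\radius$ on non-crossing bad arcs inside, whose total measure is $O(\eps^{1/2}\radius^{-2})$. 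Integrating by parts then gives the bound $C\radius+C\eps^{1/2}\radius^{-2}$.

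If $N=0$ the domain is the full circle, and periodicity of $\varphi$ together with $\phi_\eps(\ell)-\phi_\eps(0)\in2\pi\mathbb Z$ is what makes the boundary terms vanish. For $N=0$ this difference is $0$, since $\int_0^\ell\partial_s\phi_\eps=2\pi(\text{rot}(\gae)-\text{rot}(\ga))$ equals the total signed crossing count, up to $o_\radius(1)$. This is why the flat norm is taken with periodic test functions.

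The main obstacle is this global bookkeeping of $\phi_\eps$ on the closed curve. The good set $[0,\ell]\setminus E^\eps_\radius$ may have infinitely many components separated by non-crossing bad arcs, and one must check that $\phi_\eps$ stays within $C\radius$ of a single multiple of $2\pi$ on each complementary arc. I would argue this by continuity: a jump of $2\pi$ in the nearest multiple of $2\pi$ forces passage through $\partial B_\radius(1)$ from $R_\radius$ to $S_\radius$, which is by definition a crossing arc.

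\emph{Conclusion.} Combining the steps, $\limsup_\eps\|\partial_s\phi_\eps-\omega_{\eps,\radius}\|_{\mathrm{flat}}\le C\radius+o_\radius(1)$. Extracting $a^i\to a^i_\radius$ and then $\omega_\radius\to\omega\in M_{\rm fin,\mathbb Z}([0,\ell])$, with total mass bounded by $2\pi\widehat C$, a diagonal argument as at the end of Proposition~\ref{lem:compactness_in_flat_norm} yields \eqref{eq:flat_conv_closed}, since $\partial_s\phi_\eps=\kappa_{\gae}-\kappa_\ga$.
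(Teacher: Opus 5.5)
Your proof is correct. It shares the paper's architecture: decompose $E^\eps_\radius$ into arcs, count the arcs carrying a $\pm2\pi$ turn via \eqref{eq:phi_prime_bad_set_estimate}, estimate the complement, and diagonalise in $\radius$. The local analysis of a bad interval, however, is genuinely different.

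\emph{The paper's route.} It keeps the absolute angle $\theta_\eps$ and freezes the reference direction at $\partial_s\ga(a_{\eps,\radius})$. It then uses the Lipschitz continuity of $\partial_s\ga$ and the bound $b_{\eps,\radius}-a_{\eps,\radius}\le C\eps^{1/2}\radius^{-2}$ to extract $k\in\{-1,0,1\}$. This leaves per-interval errors of size $\hat d_\radius+C\eps^{1/2}\radius^{-2}$.

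\emph{Your route.} You use $|\partial_s\gae-\partial_s\ga|=|e^{i\phi_\eps}-1|$, so the relative tangent $e^{i\phi_\eps}$ plays exactly the role of $\dot\Ga_\eps$ in Proposition~\ref{lem:compactness_in_flat_norm}. The target is the fixed point $1$, and the endpoints $R_\radius,S_\radius$ do not depend on $\eps$. This buys three things:
\begin{itemize}
\item the open-curve argument transfers verbatim;
\item the step uses only continuity of $\partial_s\ga$ and $\kappa_\ga\in L^2$, not $C^2$ regularity of $\ga$;
\item a non-crossing arc returns exactly, $\phi_\eps(a)=\phi_\eps(b)$, rather than up to $\hat d_\radius+C\eps^{1/2}\radius^{-2}$. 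That error could not be summed over possibly infinitely many such arcs.
\end{itemize}

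Your version also makes explicit several points the paper glosses over.
\begin{itemize}
\item Primitive normalisation. You normalise the primitive so that $0\le\tau\le2\pi$. The paper's bound $|\phi_\eps|\le2\pi$ on a bad interval holds only after subtracting a suitable multiple of $2\pi$, since $\theta_\eps$ and $\theta$ are global liftings.
\item Arcs straddling $0\sim\ell$. Here periodicity of the test functions is essential. With non-periodic test functions, a crossing arc around $s=0$ could split its mass $2\pi$ between $\delta_0$ and $\delta_\ell$ with non-integer weights.
\item Step~3 bookkeeping. You give the continuity argument that $\phi_\eps$ stays within $C\radius$ of a single multiple of $2\pi$ on the good part of each complementary arc. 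You also bound the non-crossing bad arcs inside it by $(2\pi+C\radius)|E^\eps_\radius|$. That contribution is omitted in the estimate \eqref{eq:tre} of the open case, to which the paper defers.
\end{itemize}

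One minor fix: in Step~2, restrict the estimate to the $N$ crossing arcs, since the non-crossing ones are already accounted for in Step~3.
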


\begin{proof}
By the $C^2$-regularity of $\ga$, we have 
$$E_\radius^\eps=\cup_{i=1}^\infty (a^i_{\eps,\radius},b_{\eps,\radius}^i).$$
Here we suppose without loss of generality, that all the intervals $(a^i_{\eps,\radius},b_{\eps,\radius}^i)$ do not contain the point $0$, although it cannot be excluded a priori, by the regularity of $\ga$. Anyway, in that case the arguments are exactly the same of the those that follow.
\smallskip

\noindent
\textit{Step 1.}
Fix one interval $(a_{\eps,\radius},b_{\eps,\radius}):=(a_{\eps,\radius}^i,b_{\eps,\radius}^i)\subset[0,\ell]$. From \eqref{eq:size_bad_set_closed} we deduce
\begin{equation*}
    |a_{\eps,\radius}-b_{\eps,\radius}|\leq C\frac{\eps^{1/2}}{\radius^2}.
\end{equation*}
Combined with the Lipschitz continuity of $\partial_s\ga$, this implies
\begin{equation}\label{eq:lip_ga}
 |\partial_s\ga(b_{\eps,\radius})-\partial_s\ga(a_{\eps,\radius})| \leq C\frac{\eps^{1/2}}{\radius^2}.
\end{equation}
As a consequence,
\begin{equation}\label{eq:thete}
 |\theta(b_{\eps,\radius})-\theta(a_{\eps,\radius})| \leq C\frac{\eps^{1/2}}{\radius^2}
\end{equation}
and, for all $s\in(a_{\eps,\radius},b_{\eps,\radius})$,
\begin{equation*}
|\partial_s\gae(s)-\partial_s\ga(a_{\eps,\radius})|
\geq 
|\partial_s\gae(s)-\partial_s\ga(s)|-|\partial_s\ga(s)-\partial_s\ga(a_{\eps,\radius})|
> \radius-C\frac{\eps^{1/2}}{\radius^2}>\radius-{\frac{\radius}{2}}=\frac{\radius}{2},
\end{equation*}
provided $\eps>0$ is small enough.
Hence, 
\begin{equation}\label{eq:theta_eps-theta}
    |\theta_\eps(s)-\theta(a_{\eps,\radius})|<2\pi-d_{\radius},
\end{equation}
where $d_{\radius}\geq 0$ is a function depending only on $\radius$ and vanishing as $\radius\to 0^+$.
Using \eqref{eq:theta_eps-theta} and \eqref{eq:thete} we get
\begin{equation}\label{eq:theta_eps-theta_x}
 |\theta_\eps(s)-\theta(s)|\le |\theta_\eps(s)-\theta(a_{\eps,\radius})|+|\theta(a_{\eps,\radius})-\theta(s)|\leq 2\pi-d_{\radius}+C\frac{\eps^{1/2}}{\radius^2} \quad  \forall s\in(a_{\eps,\radius},b_{\eps,\radius}).
\end{equation}
Notice that, by the definition of $E^\eps_\radius$ in \eqref{eq:E_rho_closed},
\begin{equation}\label{eq:estremi_intervallo}
    |\partial_s\gae(a_{\eps,\radius})-\partial_s\ga(a_{\eps,\radius})|=\radius= |\partial_s\gae(b_{\eps,\radius})-\partial_s\ga(b_{\eps,\radius})|.
\end{equation}
Now, combining \eqref{eq:estremi_intervallo} and \eqref{eq:lip_ga}, we obtain, for $\eps>0$ small enough,
\begin{equation*}
    \begin{aligned}
        |\partial_s\gae(a_{\eps,\radius})-\partial_s\gae(b_{\eps,\radius})|
        &\leq
         |\partial_s\gae(a_{\eps,\radius})-\partial_s\ga(a_{\eps,\radius})|+ |\partial_s\ga(a_{\eps,\radius})-\partial_s\ga(b_{\eps,\radius})|
         \\
         &\quad+ |\partial_s\ga(b_{\eps,\radius})-\partial_s\gae(b_{\eps,\radius})|
         \leq
         \radius+C\frac{\eps^{1/2}}{\radius^2}+\radius\leq 3\radius.
    \end{aligned}
\end{equation*}
Hence, by the general theory of liftings, there exists $k\in\mathbb Z$ such that
\begin{equation}\label{eq:theta_esp-theta_eps}
|\theta_\eps(a_{\eps,\radius})-\theta_\eps(b_{\eps,\radius})-2\pi k|
\le \hat d_{\radius},
\end{equation}
where $\hat d_{\radius}\geq 0$ is a function depending only on $\radius$ and vanishing as $\radius\to 0^+$. As a consequence
\begin{equation}\label{eq:2pik}
    |2\pi k|\leq \hat d_{\radius}+|\theta_\eps(a_{\eps,\radius})-\theta_\eps(b_{\eps,\radius})|\leq \hat d_{\radius}+2\pi+d_\radius.
\end{equation}
In particular, this yields $|k|\le 1$, hence
\begin{equation*}
k\in\{-1,0,1\}.
\end{equation*}

Let $\psi\in C^{0,1}([0,\ell])$ with
$\|\psi\|_{W^{1,\infty}}\le 1$ and $\psi(0)=\psi(\ell)$. Since $\partial_s\phi_\eps=\kappa_{\gamma_\eps}-\kappa_\gamma$, we have
\begin{equation}\label{eq:one_interval_start}
\begin{aligned}
\int_{a_{\eps,\radius}}^{b_{\eps,\radius}}(\kappa_{\gamma_\eps}-\kappa_\gamma)\psi\,ds
&=
\int_{a_{\eps,\radius}}^{b_{\eps,\radius}}\partial_s\phi_\eps\,\psi\,ds
=
(\psi(b_{\eps,\radius})\phi_\eps(b_{\eps,\radius})-\psi(a_{\eps,\radius})\phi_\eps(a_{\eps,\radius}))
-
\int_{a_{\eps,\radius}}^{b_{\eps,\radius}}\phi_\eps\partial_s\psi\,ds
\\
&=:\textrm{I}_{\eps,\radius}+\textrm{II}_{\eps,\radius}.
\end{aligned}
\end{equation}
From \eqref{eq:theta_eps-theta_x}, it follows that, on $(a_{\eps,\radius},b_{\eps,\radius})$,
\begin{equation}\label{eq:angolobis}
    |\phi_\eps|\leq2\pi
\end{equation}
and hence we obtain
\begin{equation}\label{eq:termineII}
    |\textrm{II}_{\eps,\radius}|
    \leq
    \bigg|\int_{a_{\eps,\radius}}^{b_{\eps,\radius}}\phi_\eps\partial_s\psi\,ds\bigg|
    \leq \|\partial_s\psi\|_{L^\infty}\int_{a_{\eps,\radius}}^{b_{\eps,\radius}}|\phi_\eps|\,ds
    \leq 
    2\pi\|\partial_s\psi\|_{L^\infty}(b_{\eps,\radius}-a_{\eps,\radius}).
\end{equation}
Now, we rewrite the boundary terms in \eqref{eq:one_interval_start} as 
\begin{equation}\label{eq:termine_I}
    \textrm{I}_{\eps,\radius}
    =
    \big(\psi(b_{\eps,\radius})-\psi(a_{\eps,\radius})\big)\phi_\eps(b_{\eps,\radius})+\psi(a_{\eps,\radius})\big(\phi_\eps(b_{\eps,\radius})-\phi_\eps(a_{\eps,\radius})\big)=:\textrm{I}_{\eps,\radius}^{(1)}+\textrm{I}_{\eps,\radius}^{(2)}
\end{equation}
and using \eqref{eq:angolobis} we estimate the first term as
\begin{equation}\label{eq:termineI_uno}
    |\textrm{I}_{\eps,\radius}^{(1)}|=
    |\big(\psi(b_{\eps,\radius})-\psi(a_{\eps,\radius})\big)\phi_\eps(b_{\eps,\radius})|\leq 2\pi\|\partial_s\psi\|_{L^\infty}(b_{\eps,\radius}-a_{\eps,\radius}).
\end{equation}
Now, we claim that
\begin{align*}
\bigl|\textrm{I}^{(2)}_{\eps,\radius}\mp 2\pi\,\psi(a_{\eps,\radius})\bigr|
=
\Bigl|
\psi(a_{\eps,\radius})
\bigl(\phi_\eps(b_{\eps,\radius})-\phi_\eps(a_{\eps,\radius})\mp 2\pi\bigr)
\Bigr|
\le
\|\psi\|_{L^\infty}\bigg(\hat d_{\radius}+C\frac{\eps^{1/2}}{\radius^2}\bigg).
\end{align*}

If $k=0$, from \eqref{eq:theta_esp-theta_eps} and \eqref{eq:thete},  we get
\begin{equation*}
|\phi_\eps(b_{\eps,\radius})-\phi_\eps(a_{\eps,\radius})|
=
|\theta_\eps(b_{\eps,\radius})-\theta_\eps(a_{\eps,\radius})-(\theta(b_{\eps,\radius})-\theta(a_{\eps,\radius}))|
\le \hat d_{\radius}+C\frac{\eps^{1/2}}{\radius^2}.
\end{equation*}
Hence
\begin{equation*}
|\textrm{I}_{\eps,\radius}^{(2)}|
\le
C\bigg(\hat d_{\radius}+\frac{\eps^{1/2}}{\radius^2}\bigg)\|\psi\|_{L^\infty}.
\end{equation*}
Combining this estimate with \eqref{eq:one_interval_start},\eqref{eq:termineII},\eqref{eq:termine_I},\eqref{eq:termineI_uno} we obtain
\begin{equation*}\label{eq:case_k0}
\left|
\int_{a_{\eps,\radius}}^{b_{\eps,\radius}}(\kappa_{\gamma_{\eps}}-\kappa_\gamma)\psi\,ds
\right|
\le
C\|\psi\|_{W^{1,\infty}}\bigg((b_{\eps,\radius}-a_{\eps,\radius})+\hat d_{\radius}+\frac{\eps^{1/2}}{\radius^2}\bigg).
\end{equation*}

If $k=\pm1$, from \eqref{eq:theta_esp-theta_eps} we get, respectively, 
\begin{equation*}
\big|\theta_\eps(b_{\eps,\radius})-\theta_\eps(a_{\eps,\radius})\mp 2\pi\big|
\le \hat d_{\radius},
\end{equation*}
and therefore, using \eqref{eq:thete}
\begin{equation*}
\big|\phi_\eps(b_{\eps,\radius})-\phi_\eps(a_{\eps,\radius})\mp2\pi\big|
\le
\hat d_{\radius}+C\frac{\eps^{1/2}}{\radius^2},
\end{equation*}
which concludes the proof of the claim.
Finally
\begin{align*}
\left|
\int_{a_{\eps,\radius}}^{b_{\eps,\radius}}
(\kappa_{\gamma_\eps}-\kappa_\gamma)\psi\,ds
\mp 2\pi\,\psi(a_{\eps,\radius})
\right|
\le
C\|\psi\|_{W^{1,\infty}}
\bigg((b_{\eps,\radius}-a_{\eps,\radius})+\hat d_{\radius}+\frac{\eps^{1/2}}{\radius^2}\bigg).
\end{align*}

In all cases, we conclude that
\begin{equation}\label{eq:estimate_interval_closed}
\left|
\int_{a_{\eps,\radius}}^{b_{\eps,\radius}}
\big((\kappa_{\gamma_\eps}-\kappa_\gamma)-\alpha_i 2\pi\delta_{a_{\eps,\radius}}\big)\psi\,ds
\right|
\le
C\|\psi\|_{W^{1,\infty}}\bigg((b_{\eps,\radius}-a_{\eps,\radius})+\hat d_{\radius}+\frac{\eps^{1/2}}{\radius^2}\bigg),
\end{equation}
where \(\alpha_i\in\{-1,0,1\}\).

\textit{Step 2.}
We now estimate the number of intervals \((a_{\eps,\radius}^i,b_{\eps,\radius}^i)\) for which the corresponding integer \(k\) is nonzero. Let \(N_{\eps,\radius}\) denote the number of such intervals.
From \eqref{eq:2pik} we infer
\begin{equation*}
    2\pi-\hat d_{\radius}\leq |\theta_\eps(a_{\eps,\radius})-\theta_\eps(b_{\eps,\radius})|,
\end{equation*}
and hence, using also \eqref{eq:thete}, we get 
\begin{equation*}
    |\phi_\eps(b_{\eps,\radius})-\phi_\eps(a_{\eps,\radius})|\geq2\pi-\hat d_{\radius}-C\frac{\eps^{1/2}}{\radius^2}.
\end{equation*}
Thus,
\begin{equation*}
2\pi-\hat d_{\radius}-C\frac{\eps^{1/2}}{\radius^2}
\le
\int_{a_{\eps,\radius}^i}^{b_{\eps,\radius}^i} |\partial_s\phi_\eps|\,ds.
\end{equation*}
Summing over all such intervals, we obtain
\begin{equation*}
N_{\eps,\radius}\bigg(2\pi-\hat d_{\radius}-C\frac{\eps^{1/2}}{\radius^2}\bigg)
\le
\int_{E_\radius^\eps} |\partial_s\phi_\eps|\,ds.
\end{equation*}
Using \eqref{eq:phi_prime_bad_set_estimate}, it follows that
\begin{equation*}
N_{\eps,\radius}\bigg(2\pi-\hat d_{\radius}-C\frac{\eps^{1/2}}{\radius^2}\bigg)
\le
\frac{C}{\radius}+C\frac{\eps^{1/4}}{\radius}.
\end{equation*}
Therefore, for $\radius=\frac{1}{8}$, there exists a constant $\hat C>0$ such that
\begin{equation*}
N_{\eps,\frac18}\le\hat C.
\end{equation*}
Moreover, if \(0<\radius'<\radius''<\frac18\), then
\begin{equation*}
N_{\eps,\radius'}\le N_{\eps,\radius''},
\end{equation*}
and so
\begin{equation*}
N_{\eps,\radius}\le\hat C
\qquad
\forall\,\radius\in\Big(0,\frac18\Big),
\end{equation*}
for all \(\eps\) sufficiently small.
Passing to a subsequence of $(\eps)$, we may therefore assume that there exists an integer \(N\ge0\) such that $N_{\eps,\radius}=N$ is constant and does not depend on $\eps$ and $\radius$.
Let $(a_{\eps,\radius}^1,b_{\eps,\radius}^1),\dots,(a_{\eps,\radius}^N,b_{\eps,\radius}^N)$
be the intervals corresponding to the case \(k=\pm1\). Set
\begin{equation*}
\omega_{\eps,\radius}:=2\pi\sum_{i=1}^{N} \alpha_i\,\delta_{a_{\eps,\radius}^i},
\end{equation*}
where $\alpha_i$ is $\pm1$ according to the case that $k=\pm1$. Notice carefully that $\omega_{\eps,\radius}\in M_{{\rm fin}, \mathbb Z}
([0,\ex])$ and that 
\begin{equation*}
    |\omega_{\eps,\radius}|([0,\ell])\leq 2\pi N.
\end{equation*}
By \eqref{eq:estimate_interval_closed}, summing over \(i=1,\dots,N\), we obtain
\begin{equation}\label{eq:estimate_bad_set_closed}
\left|
\int_{\cup_{i=1}^N(a_{\eps,\radius}^i,b_{\eps,\radius}^i)}
\bigl((\kappa_{\gamma_\eps}-\kappa_\gamma)-\omega_{\eps,\radius}\bigr)\psi\,ds
\right|
\le
C\|\partial_s\psi\|_{L^{\infty}}
\left(
\sum_{i=1}^N (b_{\eps,\radius}^i-a_{\eps,\radius}^i)+N\Big(\hat d_{\radius}+\frac{\eps^{1/2}}{\radius^2}\Big)
\right).
\end{equation}
Since
\begin{equation*}
\sum_{i=1}^N (b_{\eps,\radius}^i-a_{\eps,\radius}^i)\le |E_\radius^\eps|
\le C\frac{\eps^{1/2}}{\radius^2},
\end{equation*}
and \(N\) is independent of \(\eps\), we deduce from \eqref{eq:estimate_bad_set_closed} that
\begin{equation}\label{eq:estimate_bad_set_closed_final}
\left|
\int_{\cup_{i=1}^N(a_{\eps,\radius}^i,b_{\eps,\radius}^i)}
\bigl((\kappa_{\gamma_\eps}-\kappa_\gamma)-\omega_{\eps,\radius}\bigr)\psi\,ds
\right|
\le
C\|\partial_s\psi\|_{L^{\infty}}
\left(
\frac{\eps^{1/2}}{\radius^2}+\hat d_{\radius}
\right).
\end{equation}

\textit{Step 3.} It remains to estimate the flat norm of $\kae-\kappa_\ga$ on $[0,\ell]\setminus \bigcup_{i=1}^N (a_{\eps,\radius}^i,b_{\eps,\radius}^i)$. This is done exactly as in Step 3 of the proof of Proposition~\ref{lem:compactness_in_flat_norm} and yields 
\begin{equation*}
	\biggl|
	\int_{[0,\ell]\setminus \cup_{i=1}^N(a_{\eps,\radius}^i,b_{\eps,\radius}^i)}  (\kae-\kappa_\ga)\psi \, ds
	\biggr|
    \leq C\radius.
\end{equation*}
Finally, using \eqref{eq:estimate_bad_set_closed_final}, we arrive at
\begin{equation*}
	\biggl|
	\int_{0}^\ell  \bigl((\kappa_{\gamma_\eps}-\kappa_\gamma)-\omega_{\eps,\radius}\bigr)\psi\,ds
	\biggr|
	\leq C\radius
	+C\frac{ \eps^{1/2}}{\radius^2}+C|o_{\radius}(1)|\,,
\end{equation*}
for all $\psi\in C^{0,1}([0,\ell])$ with $\|\psi\|_{W^{1,\infty}}\leq 1$, $\psi(0)=\psi(\ell)$,
which shows that 
 for every fixed $\radius\in (0,\frac18)$,
\begin{equation*}
\limsup_{\eps\to 0^+}
\bigl\| (\kae-\kappa_\ga) - \omega_{\eps,\radius} \bigr\|_{\mathrm{flat},[0,\ell]}
\le C(\radius+o_{\radius}(1)).
\end{equation*}
Now, passing to a further subsequence, we may assume that
\begin{equation*}\label{eq:aeps_to_a_closed}
a_{\eps,\radius}^i\to a_\radius^i\in[0,\ell]
\qquad\text{for all }i=1,\dots,N.
\end{equation*}
Therefore, as $\eps\to 0^+$,
\begin{equation}\label{eq:muconv_bis}
\omega_{\eps,\radius}\to \omega_\radius:=2\pi\sum_{i=1}^{N} \alpha_i\delta_{a_\radius^i}
\qquad \qquad\text{ in the flat norm,}
\end{equation}
with 
\begin{equation}\label{eq:var_bis}
     |\omega_{\radius}|([0,\ell])\leq 2\pi N\leq C.
\end{equation}
Now up to a subsequence, by \eqref{eq:var_bis}, there exists $\omega\in M_{{\rm fin}, \mathbb Z}
([0,\ell])$ such that 
$$
\omega_\radius\to\omega \quad\text{ in the flat norm}, \qquad \text{as $\radius\rightarrow 0^+$}.$$
Combining the previous inequality  with \eqref{eq:muconv_bis}
we get
\begin{align*}
	\limsup_{\eps\to 0^+} &
	\big\| (\kae-\kappa_\ga) - \omega \bigr\|_{\mathrm{flat},[0,\ell]}
    \\
    &
    \leq
    \limsup_{\eps\to 0^+} \big(
	\bigl\| (\kae-\kappa_\ga) - \omega_{\eps,\radius} \bigr\|_{\mathrm{flat},[0,\ell]}+
	\bigl\| \omega_{\eps,\radius}-\omega_{\radius} \bigr\|_{\mathrm{flat},[0,\ell]}
    +
	\bigl\| \omega_{\radius}-\omega \bigr\|_{\mathrm{flat},[0,\ell]}\big)
    \\
    &\leq
    C(\radius+o_{\radius}(1))+
	\bigl\| \omega_{\radius}-\omega \bigr\|_{\mathrm{flat},[0,\ell]}.
\end{align*}
Since $\radius\in (0,\frac18)$ is arbitrary, we then obtain that
\begin{equation*}
\lim_{\eps\to 0^+}
\bigl\| (\kae-\kappa_\ga) - \omega \bigr\|_{\mathrm{flat},[0,\ell]} = 0.
\end{equation*}
\end{proof}

Let us define a strictly increasing function \(\Phi\in C^1(\mathbb R)\) by
\begin{equation}\label{eq:Phi_closed}
\Phi(r):=\int_0^r 2\sqrt{1-\cos t}\,dt .
\end{equation}

By an argument completely analogous to that used in the proof of Lemma~\ref{lem:theta_w}, we obtain

\begin{Lemma}\label{lem:Phi_phi_closed}
Let $\phi_\eps$ be as in \eqref{eq:phi_eps}.
We have
\begin{equation*}
\lim_{\eps\to0^+}
\left\|
\frac{2\pi}{8\sqrt2}\,\partial_s(\Phi\circ\phi_\eps)-\partial_s\phi_\eps
\right\|_{\mathrm{flat},[0,\ell]}
=0.
\end{equation*}
In particular, letting $\omega\in M_{{\rm fin},\mathbb Z}([0,\ex])$
be the measure for which \eqref{eq:flat_conv_closed} holds, we have
\begin{equation}\label{eq:Phi_phi_to_omega}
\frac{2\pi}{8\sqrt2}\,\partial_s(\Phi\circ\phi_\eps)\to \omega
\qquad\text{in the flat norm.}
\end{equation}
\end{Lemma}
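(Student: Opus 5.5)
We mimic the proof of Lemma~\ref{lem:theta_w}, replacing $\theta_\eps$ by $\phi_\eps=\theta_\eps-\theta$. We set
\[
g(r):=\frac{2\pi}{8\sqrt2}\Phi(r)-r .
\]
Since $\Phi(r+2\pi)=\Phi(r)+8\sqrt2$, the function $g$ is continuous, $2\pi$-periodic and vanishes on $2\pi\mathbb Z$. Hence $M:=\|g\|_{L^\infty(\mathbb R)}<+\infty$ and $\sup\{|g(r)|:\mathrm{dist}(r,2\pi\mathbb Z)\le\delta\}\to0$ as $\delta\to0^+$.

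For a test function $\psi\in C^{0,1}([0,\ell])$ with $\|\psi\|_{W^{1,\infty}}\le1$ and $\psi(0)=\psi(\ell)$, we integrate by parts:
\[
\Big\langle \tfrac{2\pi}{8\sqrt2}\partial_s(\Phi\circ\phi_\eps)-\partial_s\phi_\eps,\psi\Big\rangle
= g(\phi_\eps(\ell))\psi(\ell)-g(\phi_\eps(0))\psi(0)-\int_0^\ell g(\phi_\eps)\,\partial_s\psi\,ds .
\]
The boundary term vanishes by periodicity of $\psi$ and $g$. Indeed, $\partial_s\gamma_\eps$ and $\partial_s\gamma$ are both closed, so $\phi_\eps(\ell)-\phi_\eps(0)=\theta_\eps(\ell)-\theta_\eps(0)-(\theta(\ell)-\theta(0))\in2\pi\mathbb Z$, whence $g(\phi_\eps(\ell))=g(\phi_\eps(0))$. (This is the only point where the argument differs from the open case, where $\theta_\eps(0)=0$ and $\theta_\eps(\ex)\in2\pi\mathbb Z$ made $g$ vanish at both ends.) Taking the supremum over $\psi$, the flat norm is bounded by $\|g(\phi_\eps)\|_{L^1([0,\ell])}$.

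Next we split $[0,\ell]$ into $E^\eps_\radius$ from \eqref{eq:E_rho_closed} and its complement. On $E^\eps_\radius$, \eqref{eq:size_bad_set_closed} gives $\int_{E^\eps_\radius}|g(\phi_\eps)|\le M C\eps^{1/2}/\radius^2$. Off $E^\eps_\radius$ we have $|\partial_s\gamma_\eps-\partial_s\gamma|\le\radius$. By the identity $|\partial_s\gamma_\eps-\partial_s\gamma|^2=2(1-\cos\phi_\eps)$ of Remark~\ref{rmk:Modica-Mortola}, this means $1-\cos\phi_\eps\le\radius^2/2$, hence $\mathrm{dist}(\phi_\eps,2\pi\mathbb Z)\le C\radius$; note that here no limiting process in $\eps$ is even needed, unlike in the open case. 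Therefore
\[
\limsup_{\eps\to0^+}\|g(\phi_\eps)\|_{L^1}\le \ell\,\sup\{|g(r)|:\mathrm{dist}(r,2\pi\mathbb Z)\le C\radius\},
\]
and letting $\radius\to0^+$ gives the first claim.

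For \eqref{eq:Phi_phi_to_omega}, Proposition~\ref{lem:compactness_closed_flat} gives $\partial_s\phi_{\eps_k}=\kappa_{\gamma_{\eps_k}}-\kappa_\gamma\to\omega$ in the (periodic) flat norm, and the triangle inequality concludes. The only delicate point is the boundary term in the integration by parts, handled above through the periodicity of $\psi$ and of $g$ together with the closedness of both curves.
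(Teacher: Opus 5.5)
Your proof is correct and follows the paper's route. The paper simply transfers the argument of Lemma~\ref{lem:theta_w} to $\phi_\eps$: it bounds the flat norm by $\|g(\phi_\eps)\|_{L^1}$ and splits $[0,\ell]$ into $E^\eps_\radius$ and its complement. Your explicit handling of the boundary term, via $\psi(0)=\psi(\ell)$, the $2\pi$-periodicity of $g$ and $\phi_\eps(\ell)-\phi_\eps(0)\in 2\pi\mathbb Z$, fills in a detail the paper leaves implicit, and your remark that no limit in $\eps$ is needed off $E^\eps_\radius$ (because $|\partial_s\gamma_\eps|=1$ exactly) is also right.
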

\nada{
\begin{proof}
Set
$$g(r):=\frac{2\pi}{8\sqrt2}\Phi(r)-r.$$
It is straightforward to verify that $g$ is continuous, $2\pi$-periodic (hence bounded on $\mathbb{R})$ and $g(2\pi k)=0$. Therefore,
\begin{equation}\label{eq:omega_g_closed}
\omega_g(\delta):=
\sup\bigl\{|g(r)|:\text{dist}(r,2\pi\mathbb Z)\le\delta\bigr\}
\longrightarrow0
\qquad\text{as }\delta\to0^+.
\end{equation}
Now we argue exactly as in Lemma \eqref{lem:theta_w}, proving that 
\begin{equation*}
\|g(\phi_\eps)\|_{L^1(0,\ell)}\to0,
\end{equation*}
reasoning separately on $E_\radius^\eps$ and on its complement.
By \eqref{eq:size_bad_set_closed},
\begin{equation}\label{eq:gphi_on_E}
\int_{E_\radius^\eps}|g(\phi_\eps)|\,ds
\le
M\,|E_\radius^\eps|
\le
\frac{CM}{\radius^2}\sqrt\eps.
\end{equation}
On the other hand, if $s\notin E_\radius^\eps$, then
$$|\partial_s\gamma_\eps(s)-\partial_s\gamma(s)|\le\radius.$$
Since
$$\partial_s\gamma_\eps(s)=e^{i\theta_\eps(s)},
\qquad
\partial_s\gamma(s)=e^{i\theta(s)},$$
it follows that
$$|e^{i\phi_\eps(s)}-1|\le\radius.$$
Therefore there exists \(\delta_\radius^\eps\ge0\) such that
\begin{equation*}\label{eq:phi_close_closed}
\text{dist}(\phi_\eps(s),2\pi\mathbb Z)\le\delta_\radius^\eps
\quad\forall s\in[0,\ell]\setminus E_\radius^\eps,
\quad 
\text{and }
\quad
\limsup_{\eps\to0^+}\delta_\radius^\eps\le C\radius.
\end{equation*}
Using \eqref{eq:omega_g_closed}, we deduce
\begin{equation*}
|g(\phi_\eps(s))|
\le
\omega_g(\delta_\radius^\eps)
\qquad\forall s\in[0,\ell]\setminus E_\radius^\eps,
\end{equation*}
hence
\begin{equation}\label{eq:gphi_off_E}
\int_{[0,\ell]\setminus E_\radius^\eps}|g(\phi_\eps)|\,ds
\le
\ell\,\omega_g(\delta_\radius^\eps).
\end{equation}
Combining \eqref{eq:gphi_on_E} and \eqref{eq:gphi_off_E}, 
passing to the limit superior as $\eps\to0^+$ and letting $\radius\to0^+$ we get the thesis.
\end{proof}}
\subsection{\texorpdfstring{$\Gamma$-liminf inequality}{Gamma-liminf inequality}}\label{sec:lb_2}
In this section we prove the lower bound inequality of Theorem \ref{thm:main_result_2}. To this purpose we may take, without loss of generality, a sequence $(\gae,\kae)$ such that
$$
 \eps^{1/2}\displaystyle\int_0^\ell \kae^2 \,ds\leq C,
 \qquad
 \int_0^\ell |\partial_s\gae-\partial_s\ga|^2 \,ds\leq C\eps^{1/2},
$$
which imply $\gae\to\ga$ in $H^1([0,\ell];\R^2)$ and that $\kae-\kappa_\ga$ converges in the flat norm to
$\omega=2\pi\sum_{j=1}^N c_j\,\delta_{x_j}$ with $N\geq 0$, $0\leq x_0<x_1<\cdots<x_N\leq\ell$ and $c_j\in \mathbb Z$.
It remains to prove that
\begin{equation}\label{eq:liminf_closed}
\liminf_{\eps\to0^+} \mathcal{G}_\eps(\gamma_\eps)
\ge
\sigma G(\ga,\omega).
\end{equation}

Recalling that $\phi_\eps=\theta_\eps-\theta$ and $\partial_s\phi_\eps=\kappa_{\gamma_\eps}-\kappa_\gamma$,  from Remark \ref{rmk:Modica-Mortola} we have that
$$
\mathcal{G}_\eps(\gamma_\eps)
=
\sqrt\eps\int_0^\ell (\partial_s\phi_\eps+\kappa_\gamma)^2\,ds
+
\frac1{\sqrt\eps}\int_0^\ell (1-\cos\phi_\eps)\,ds.
$$
Let $\eta\in(0,1)$. By Young's inequality $
2ab\ge -\eta a^2-\frac1\eta b^2
$,  we obtain
$$
(\partial_s\phi_\eps+\kappa_\gamma)^2
= (\partial_s\phi_\eps)^2+2\partial_s\phi_\eps\,\kappa_\ga+\kappa_\ga^2
\ge
(1-\eta)(\partial_s\phi_\eps)^2
-\Bigl(\frac1\eta-1\Bigr)\kappa_\gamma^2.
$$
Therefore
$$
\mathcal{G}_\eps(\gamma_\eps)
\ge
\sqrt\eps(1-\eta)\int_0^\ell |\partial_s\phi_\eps|^2\,ds
+
\frac1{\sqrt\eps}\int_0^\ell (1-\cos\phi_\eps)\,ds
-
C_\eta\sqrt\eps,
$$
where $C_\eta:=\Bigl(\frac1\eta-1\Bigr)\displaystyle\int_0^\ell \kappa_\gamma^2\,ds$. 
Since $\kappa_\gamma\in L^2((0,\ell))$, we have $C_\eta\sqrt\eps=o(1)$ as
$\eps\to0^+$.

Using once more the algebraic inequality
$
\sqrt{\eps}\,a^2+\frac{1}{\sqrt{\eps}}\,b^2\ge 2|a||b|,
$
with
$
a=\sqrt{1-\eta}\,|\partial_s\phi_\eps|,
b=\sqrt{1-\cos\phi_\eps},
$
we deduce
\begin{align*}
\mathcal{G}_\eps(\gamma_\eps)
&\ge
\sqrt{1-\eta}\int_0^\ell 2|\partial_s\phi_\eps|\sqrt{1-\cos\phi_\eps}\,ds
-o(1)
\\
&=
\sqrt{1-\eta}
\int_0^\ell |\partial_s(\Phi\circ\phi_\eps)|\,ds-o(1)
=
\sqrt{1-\eta}|\partial_s(\Phi\circ\phi_\eps)|([0,\ell])-o(1),
\end{align*}
where $\Phi$ is defined in \eqref{eq:Phi_closed}. Hence the sequence $(\Phi\circ\phi_\eps)_\eps$ is uniformly bounded in $BV((0,\ell))$. Therefore, up to a further subsequence, there exists $u\in BV((0,\ell))$ such that
$$
\Phi\circ\phi_\eps \to u
\qquad \text{in }L^1((0,\ell)).
$$
Hence the sequence $\partial_s(\Phi\circ\phi_\eps)$ converges weakly* to $\partial_su$ and,
by the lower semicontinuity of the total variation with respect to $L^1$ convergence, we obtain
\begin{equation*}
\liminf_{\eps\to0^+} \mathcal{G}_\eps(\gamma_\eps)
\ge
\sqrt{1-\eta}\,
|\partial_s u|([0,\ell]).
\end{equation*}
Since $\eta\in(0,1)$ is arbitrary, letting $\eta\to0^+$ gives
\begin{equation}\label{eq:lsc_closed}
\liminf_{\eps\to0^+} \mathcal{G}_\eps(\gamma_\eps)
\ge
|\partial_s u|((0,\ell)).
\end{equation}

It remains to identify the limit measure $\partial_s u$.
By Lemma~\ref{lem:Phi_phi_closed} we deduce 
\begin{equation*}\label{eq:Phi_phi_to_omega_closed}
\partial_s u=\frac{\sigma}{2\pi}\omega=\sigma\sum_{j=1}^N c_j\delta_{x_j}.
\end{equation*}
Therefore, 
\begin{equation}\label{eq:Du_total_closed}
    |\partial_s u|([0,\ell])=\sigma\sum_{j=1}^N |c_j|.
\end{equation}
From \eqref{eq:lsc_closed} and \eqref{eq:Du_total_closed} we get
\begin{equation*}
\liminf_{\eps\to0^+} \mathcal{G}_\eps(\gamma_\eps)
\ge
\sigma\sum_{j=1}^N |c_j|=\sigma G(\ga,\omega).
\end{equation*}
\subsection{\texorpdfstring{$\Gamma$-limsup inequality}{Gamma-limsup inequality}}\label{sec:up_2}

In this section we establish the upper bound in Theorem~\ref{thm:main_result_2}. 
We start with the following preliminary lemma.

\begin{Lemma}\label{lem:opposite_tangent_point}
Let $\gamma\in C^2([0,\ell];\R^2)$ be a closed regular curve parametrized by arclength with $\dot\ga(0)=\dot\ga(\ell)$, $\ddot\ga(0)=\ddot\ga(\ell)$, and let
$\theta_\gamma\in([0,\ell];\mathbb R)$ be a continuous lifting of $\partial_s\ga$ so that $\theta_\ga(0)=0, \theta_\ga(\ell)= 2\pi$.
Then, for every $s_1\in[0,\ell)$, there exists $s_2\in[0,\ell)$ such that
$$
|\theta_\gamma (s_2)-\theta_\gamma(s_1)|=\pi+2\pi k
$$
for some $k\in\mathbb Z$.
Equivalently,
$$\partial_s\gamma(s_2)=-\partial_s\gamma(s_1).$$
\end{Lemma}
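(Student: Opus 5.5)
This is an intermediate value argument applied to the continuous lifting $\theta_\gamma$. The hypotheses say that $\gamma$ is $C^2$ and closed, so the lifting $\theta_\gamma$ is $C^1$ on $[0,\ell]$. By assumption $\theta_\gamma(0)=0$ and $\theta_\gamma(\ell)=2\pi$, so the total rotation is exactly $2\pi$. The strategy is to show that, starting from any $s_1$ and going once around the curve, the angle must pass through $\theta_\gamma(s_1)\pm\pi$.

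Fix $s_1\in[0,\ell)$. Since $\gamma$ is closed with $\dot\gamma(0)=\dot\gamma(\ell)$, extend $\theta_\gamma$ to all of $\mathbb R$ by setting $\theta_\gamma(s+\ell)=\theta_\gamma(s)+2\pi$. This extension is continuous, because $\theta_\gamma(\ell)-\theta_\gamma(0)=2\pi$. It also satisfies $(\cos\theta_\gamma,\sin\theta_\gamma)=\partial_s\gamma$, where $\gamma$ is understood as $\ell$-periodic.

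Consider the continuous function $h(s):=\theta_\gamma(s)-\theta_\gamma(s_1)$ on the interval $[s_1,s_1+\ell]$. Its endpoint values are $h(s_1)=0$ and $h(s_1+\ell)=2\pi$. By the intermediate value theorem there is some $t\in(s_1,s_1+\ell)$ with $h(t)=\pi$.

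Now set $s_2:=t$ if $t<\ell$, and $s_2:=t-\ell$ otherwise, so that $s_2\in[0,\ell)$. By the quasi-periodicity of the extension, $\theta_\gamma(s_2)=\theta_\gamma(t)-2\pi m$ with $m\in\{0,1\}$. Hence $\theta_\gamma(s_2)-\theta_\gamma(s_1)=\pi-2\pi m$, which equals $\pi$ or $-\pi$. In either case the absolute value is $\pi$, which has the required form $\pi+2\pi k$ with $k=0$.

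The equivalent formulation follows by computing $\partial_s\gamma(s_2)=(\cos(\theta_\gamma(s_1)\pm\pi),\sin(\theta_\gamma(s_1)\pm\pi))=-\partial_s\gamma(s_1)$. Conversely, $\partial_s\gamma(s_2)=-\partial_s\gamma(s_1)$ forces $\theta_\gamma(s_2)-\theta_\gamma(s_1)\in\pi+2\pi\mathbb Z$, which gives the stated equivalence.

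The only point needing care is the periodic extension and reducing $t$ back into $[0,\ell)$. It works precisely because the rotation index is $1$, i.e. $\theta_\gamma(\ell)=2\pi$. If the index were $0$ the argument would fail, and a figure-eight curve shows the conclusion can then be false.
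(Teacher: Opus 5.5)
Your argument is correct and is the same as the paper's: extend the lifting quasi-periodically beyond $\ell$, apply the intermediate value theorem on $[s_1,s_1+\ell]$ to reach the value $\theta_\gamma(s_1)+\pi$, and reduce the resulting parameter modulo $\ell$. Your extension $\theta_\gamma(s+\ell)=\theta_\gamma(s)+2\pi$ is the correct one; the paper writes the shift as $-2\pi$, which makes its $\hat\theta_\gamma$ jump at $s=\ell$ and produces its value $3\pi$ in the second case, whereas the correct bookkeeping gives your $\theta_\gamma(s_2)-\theta_\gamma(s_1)=\pm\pi$.
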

\begin{proof}
Fix $s_1\in[0,\ell)$. Define the continuous function $\hat\theta_\gamma:[s_1,s_1+\ell]\to\R$ by
\begin{equation*}
\hat \theta_\ga(s)=
\begin{cases}
    \theta_\ga(s) &\text{if } s\in[s_1,\ell] \\
    \theta_\ga(s-\ell)-2\pi &\text{if } s\in(\ell,\ell+s_1].
\end{cases}
\end{equation*}
Notice that $\hat\theta_\ga(s_1)=\theta_\ga(s_1)$ and $\hat\theta_\ga(\ell+s_1)=\theta_\ga(s_1)+2\pi$. Hence, by continuity, there exists $\hat s_2\in(s_1,s_1+\ell]$ such that 
\begin{equation}\label{eq:continuità}
    \hat\theta_\ga(\hat s_2)=\theta_\ga(s_1)+\pi.
\end{equation}
Now define
\begin{equation*}
s_2:=
\begin{cases}
\hat s_2 & \text{if } \hat s_2\in[s_1,\ell],\\[4pt]
\hat s_2-\ell & \text{if } \hat s_2\in(\ell,s_1+\ell].
\end{cases}
\end{equation*}
If $\hat s_2\in(s_1,\ell]$, then from \eqref{eq:continuità}
$$
\theta_\gamma(s_2)-\theta_\gamma(s_1)
= \hat\theta_\ga(\hat s_2)-\theta_\gamma(s_1)
=
\pi.
$$
If instead $\hat s_2\in(\ell,s_1+\ell]$, then $s_2=\hat s_2-\ell$ and 
$$\theta_\gamma(s_2)-\theta_\gamma(s_1)=
\theta_\ga(\hat s_2-\ell)-\theta_\gamma(s_1)=
\hat\theta_\ga(\hat s_2)+2\pi-\theta_\gamma(s_1)=3\pi.
$$
In both cases,
$$\theta_\gamma(s_2)-\theta_\gamma(s_1)\in \pi+2\pi\mathbb Z.$$
\end{proof}

We are now in a position to construct the recovery sequence.
\paragraph{Upper bound for one curvature singularity.}
We begin by considering the case of just one curvature singularity. 
Fix a point $s_1\in[0,\ell)$. 
By Lemma~\ref{lem:opposite_tangent_point}, there exists a point $s_2\in[0,\ell)$ such that
$$
\partial_s\gamma(s_2)=-\partial_s\gamma(s_1).
$$

Let $\keycon:[s_1,s_1+\ell(\keycon)]\to\R^2$ be the curve constructed in Section~\ref{par:key_construction}. Namely, if $\de$ is as in \eqref{eq:scelta_de},
then $\keycon$ is obtained by taking the borderline elastica $\alpha_\eps$ restricted to an interval of length $2\de$ and attaching at its endpoints two circular connecting arcs $\raccordo$.
Each arc is extended until the tangent becomes horizontal.

Denote by $R\in SO(2)$ the rotation satisfying 
$$
R\,\partial_s{\keycon}(s_1)=\partial_s\gamma(s_1).
$$
We define the rotated building block of the key construction 
$$
\hat{\keycon}(s):=
\gamma(s_1)+R\bigl(\keycon(s)-\keycon(s_1)\bigr),
\qquad s\in[s_1,s_1+\ell(\keycon)].
$$
In this way
$$
\hat{\keycon}(s_1)=\gamma(s_1),
\qquad
\partial_s\hat{\keycon}(s_1)=\partial_s\gamma(s_1).
$$

Moreover, let $p_\eps$ and $q_\eps$ be such that
$$
p_\eps=\hat\keycon(s_1), 
\qquad 
q_\eps=\hat{\keycon}(s_1+\ell(\hat\keycon)).
$$

Let $\Sigma_\epsilon$ be the straight segment of length $|p_\eps-q_\eps|$, parametrized by arclength on $[0,|p_\eps-q_\eps|]$, such that
$$
\partial_s\Sigma_\epsilon=\partial_s\gamma(s_2).
$$

We now define a curve 
$$
\trec:[0,\ell(\trec)]\to\R^2,
\qquad
\ell(\trec)=\ell+\ell(\keycon)+|p_\eps-q_\eps|,
$$
by concatenation as follows:
\begin{equation*}
\trec(s)
=
\begin{cases}
\gamma(s)
& \text{for } s\in[0,s_1],
\\[6pt]

\hat\keycon(s)
& \text{for } s\in[s_1,s_1+\ell(\hat\keycon)],
\\[6pt]

\gamma\bigl(s-\ell(\hat\keycon)\bigr)
& \text{for } s\in[s_1+\ell(\hat\keycon),\,s_2+\ell(\hat\keycon)],
\\[6pt]

\Sigma_\epsilon\bigl(s-(s_2+\ell(\hat\keycon))\bigr)
& \text{for } s\in[s_2+\ell(\hat\keycon),\,s_2+\ell(\hat\keycon)+|p_\eps-q_\eps|],
\\[6pt]

\gamma\bigl(s-\ell(\hat\keycon)-|p_\eps-q_\eps|\bigr)
& \text{for } s\in[s_2+\ell(\hat\keycon)+|p_\eps-q_\eps|,\,\ell(\trec)].
\end{cases}
\end{equation*}
One checks that $\tilde\recovery\in H^2([0,\ell(\tilde\gae)];\mathbb R^2)$, see for instance \cite[Lemma 4.1]{BeMu04}. Furthermore, for $\eps>0$ sufficiently small, $\tilde\recovery$ has exactly one self-intersection, due to the elastica $\alpha_\eps$.
From \eqref{eq:lunghezza_ricciolo} and \eqref{eq:lunghezza_raccordo} we get 
\begin{equation}
\begin{aligned}\label{eq:lunghezza_keycon}
\ell(\keycon)
&=\ell(\alpha_\eps)+2\ell(\raccordo)=
2\de
+2\sqrt{2\eps}
\frac{1+e^{-2\de/\sqrt{2\eps}}}{e^{-\de/\sqrt{2\eps}}}
\arctan (e^{-\de/\sqrt{2\eps}})
\\
&\leq
2\de
+2\sqrt{2\eps}
(1+e^{-2\de/\sqrt{2\eps}})
\leq 2\de
+4\sqrt{2\eps}
\leq C\de,
\end{aligned}
\end{equation}
where we used $\arctan(x)\leq x$ for $x\geq 0$ and \eqref{eq:scelta_de}. Recalling \eqref{eq:peps-qeps} and using $\sin(x)\leq x$ for $x\geq0$, we also obtain
\begin{equation}
\begin{aligned}\label{eq:differenza_endpoints}
|p_\eps-q_\eps|
&=
2\de-4\sqrt{2\eps}\,
\tanh\!\left(\frac{\de}{\sqrt{2\eps}}\right)+2r_\eps \, \sin\theta_\eps(\de)
\\
&\leq 2\de+2 r_\eps\theta_\eps
= 2\de
+2\ell(\raccordo)
\leq 
C\de,
\end{aligned}
\end{equation}
where the last inequality follows from \eqref{eq:lunghezza_keycon}. Thus, combining \eqref{eq:lunghezza_keycon} and \eqref{eq:differenza_endpoints}, we obtain
\begin{equation}\label{eq:differenza}
    \ell(\tilde\recovery)-\ell
    =\ell(\keycon)+|p_\eps-q_\eps|
    \leq C\de.
\end{equation} 

Let now $\lambda_\epsilon:=\frac{\ell}{\ell(\tilde\recovery)}$ and define the recovery sequence by the homothety
$$
\recovery(s):=\lambda_\epsilon\,\tilde\recovery\!\left(\frac{s}{\lambda_\epsilon}\right),
\qquad s\in[0,\ell].
$$
Specifically,
\begin{equation*}
\gamma_\eps(s)
=
\begin{cases}
\lambda_\eps \gamma\!\left(\frac{s}{\lambda_\eps}\right)
& \text{for } s\in[0,\lambda_\eps s_1]=:I_1,
\\

\lambda_\eps \hat\keycon\!\left(\frac{s}{\lambda_\eps}\right)
& \text{for } s\in[\lambda_\eps s_1,\lambda_\eps(s_1+\ell(\hat\keycon))]=:I_2,
\\

\lambda_\eps \gamma\!\left(\frac{s}{\lambda_\eps}-\ell(\hat\keycon)\right)
& \text{for } s\in[\lambda_\eps(s_1+\ell(\hat\keycon)),
\lambda_\eps(s_2+\ell(\hat\keycon))]=:I_3,
\\

\lambda_\eps \Sigma_\eps\!\left(\frac{s}{\lambda_\eps}-(s_2+\ell(\hat\keycon))\right)
& \text{for } s\in[\lambda_\eps(s_2+\ell(\hat\keycon)),
\lambda_\eps(s_2+\ell(\hat\keycon)+|p_\eps-q_\eps|)]=:I_4,
\\

\lambda_\eps \gamma\!\left(\frac{s}{\lambda_\eps}-\ell(\hat\keycon)-|p_\eps-q_\eps|\right)
& \text{for } s\in[\lambda_\eps(s_2+\ell(\hat\keycon)+|p_\eps-q_\eps|),
\ell]=:I_5.
\end{cases}
\end{equation*}
Recalling \eqref{eq:differenza}, for $\eps>0$ sufficiently small, $\recovery$ has exactly one self-intersection, due to the elastica. One can also check that $\recovery$ is closed and that $\ell(\recovery)=\ell$.
Moreover, as $\eps\to0^+$, the lengths of the curves $\hat{\keycon}$ and $\Sigma_\eps$ tend to zero, and $\lambda_\eps\to1$. It follows that $\recovery\to\gamma$ pointwise on $[0,\ell]$.
We next prove that the convergence is in fact strong in $H^1([0,\ell];\R^2)$. More precisely, there exists a constant $C>0$, independent of $\eps$, such that
\begin{equation}\label{eq:conv_H1_recovery}
\|\partial_s\recovery-\partial_s\gamma\|_{L^2(0,\ell)}^2\le C\eps^{1/2}.
\end{equation}

Let $s\in I_1$. By definition,
$$
\partial_s\gamma_\eps(s)=\partial_s\gamma\!\left(\frac{s}{\lambda_\eps}\right).
$$
Since $\gamma\in C^2([0,\ell];\R^2)$, the map $\partial_s\gamma$ is Lipschitz continuous on $[0,\ell]$. Therefore
$$
|\partial_s\gamma_\eps(s)-\partial_s\gamma(s)|^2
\le
C\left|\frac{s}{\lambda_\eps}-s\right|^2.
$$
Using $\lambda_\eps=\ell/\ell(\tilde\gamma_\eps)$, we compute
$$
\frac{s}{\lambda_\eps}-s
=
s\left(\frac{1}{\lambda_\eps}-1\right)
=
\frac{s}{\ell}\bigl(\ell(\tilde\gamma_\eps)-\ell\bigr).
$$
Hence
$$
\int_{I_1}|\partial_s\gamma_\eps(s)-\partial_s\gamma(s)|^2\,ds
\le
\frac{C}{\ell^2}|\ell(\tilde\gamma_\eps)-\ell|^2
\int_{I_1}s^2\,ds
\leq 
\frac{C}{\ell^2}|\ell(\tilde\gamma_\eps)-\ell|^2.
$$
Using
\eqref{eq:differenza} we obtain
$$
\int_{I_1}|\partial_s\gamma_\eps(s)-\partial_s\gamma(s)|^2\,ds
\le C\de^2.
$$
Since $\de=\eps^a$ with $a\in(\frac14,\frac12)$, it follows that
$$
\int_{I_1}|\partial_s\gamma_\eps(s)-\partial_s\gamma(s)|^2\,ds
\le C\eps^{1/2},
$$
and
\begin{equation}\label{eq:limite_uno}
\frac{1}{2\eps^{1/2}}
\int_{I_1}|\partial_s\gamma_\eps(s)-\partial_s\gamma(s)|^2\,ds
\le
C\,\eps^{2a-\frac12}\to0
\qquad\text{as }\eps\to0^+.
\end{equation}
The same argument applies to the intervals $I_3$ and $I_5$, that is, to all the intervals where $\recovery$ is given by a reparametrization of $\gamma$.

\medskip

Let $s\in I_4$; here $\recovery(s)$ is the straight segment connecting the two
endpoints of the adjacent arcs and satisfying
$$
\partial_s\gamma(s_2)=\partial_s\Sigma_\epsilon.
$$
Thus,
\begin{align*}
|\partial_s\recovery(s)-\partial_s\gamma(s)|^2
&=
\left|
\partial_s\Sigma_\eps\!\left(\tfrac s{\lambda_\eps}-(s_2+\ell(\hat\keycon))\right)
-\partial_s\gamma(s)
\right|^2
\\
&=
\left|
\partial_s\gamma(s_2)-\partial_s\gamma(s)
\right|^2
\le
C\,|s-s_2|^2.
\end{align*}
Since $s\in I_4$ and both $\ell(\hat\keycon)$ and $|p_\eps-q_\eps|$ are of order $\de$ (see \eqref{eq:lunghezza_keycon} and \eqref{eq:differenza_endpoints}), it follows that
$$
|s-s_2|\le C\de
\qquad\text{for all } s\in I_4.
$$
Hence
$$
|\partial_s\recovery(s)-\partial_s\gamma(s)|^2\le C\de^2
\qquad\text{for all } s\in I_4.
$$
Therefore, 
\begin{align*}
\int_{I_4}
|\partial_s\recovery(s)-\partial_s\gamma(s)|^2\,ds
&\le C\de^2 |I_4|
\le C\de^2 |p_\eps-q_\eps|
\le C\de^3.
\end{align*}
From \eqref{eq:scelta_de}, we obtain
$$
\int_{I_4}
|\partial_s\recovery(s)-\partial_s\gamma(s)|^2\,ds
\le C\de^3
\le C\eps^{1/2},
$$
and 
\begin{equation}\label{eq:limite_due}
\frac{1}{2\eps^{1/2}}
\int_{I_4}|\partial_s\recovery(s)-\partial_s\gamma(s)|^2 \,ds
\le
C\,\eps^{3a-\frac12}\to0
\end{equation}
as $\eps\to 0^+$.

\medskip

We finally consider the case in which $s\in I_2$, where $\recovery(s)$ coincides with the curve constructed in Section~\ref{par:key_construction}.
Let
$$
v:=\partial_s\gamma(s_1)=\partial_s\hat\keycon(s_1).
$$
By the change of variable $t=s/\lambda_\eps$, we have
\begin{equation}\label{eq:pezzo_I2}
\int_{I_2}|\partial_s\recovery(s)-\partial_s\gamma(s)|^2\,ds
=
\lambda_\eps
\int_{s_1}^{s_1+\ell(\hat\keycon)}
\left|
\partial_s\hat\keycon(t)-\partial_s\gamma(\lambda_\eps t)
\right|^2\,dt.
\end{equation}
From the Lipschitz continuity of $\partial_s\gamma$ and \eqref{eq:lunghezza_keycon}, we have
$$
|\partial_s\gamma(\lambda_\eps t)-v|
\leq C|\lambda_\eps t-s_1|
\leq C\de,
\qquad t\in[s_1,s_1+\ell(\hat\keycon)].
$$

Therefore
\begin{equation}\label{eq:interno_I2}
\begin{aligned}
\left|
\partial_s\hat\keycon(t)-\partial_s\gamma(\lambda_\eps t)
\right|^2
&\le
2|\partial_s\hat\keycon(t)-v|^2
+2|v-\partial_s\gamma(\lambda_\eps t)|^2
\\
&\le
2|\partial_s\hat\keycon(t)-v|^2+C\de^2.
\end{aligned}
\end{equation}
Hence, from \eqref{eq:pezzo_I2} and \eqref{eq:interno_I2}, we obtain
\begin{equation}\label{eq:caseC_reduction}
\int_{I_2}|\partial_s\recovery(s)-\partial_s\gamma(s)|^2\,ds
\leq
C
\int_{s_1}^{s_1+\ell(\hat\keycon)}
|\partial_s\hat\keycon(t)-v|^2\,dt
+C\de^2.
\end{equation}
Now, by construction, $\hat\keycon$ is obtained from $\keycon$ by a rigid motion, hence
$$
\int_{s_1}^{s_1+\ell(\hat\keycon)}
|\partial_s\hat\keycon(t)-v|^2\,dt
=
\int_0^{\ell(\keycon)}
|\partial_s\keycon(\sigma)-e_1|^2\,d\sigma.
$$
We split the latter integral into the contribution of the borderline elastica
$\alpha_\eps$ and the two circular connecting arcs $\raccordo$:
\begin{equation}\label{eq:caseC_split}
\int_0^{\ell(\keycon)}
|\partial_s\keycon-e_1|^2\,d\sigma
=
\int_{-\de}^{\de}|\partial_s\alpha_\eps-e_1|^2\,ds
+
2\int_{\raccordo}|\partial_s\raccordo-e_1|^2\,ds.
\end{equation}

We first consider the elastica part. Since
$$
\partial_s\alpha_\eps=(\cos\theta_\eps,\sin\theta_\eps),
$$
we compute
$$
|\partial_s\alpha_\eps-e_1|^2
=
(\cos\theta_\eps-1)^2+\sin^2\theta_\eps
=
2(1-\cos\theta_\eps).
$$
For the borderline elastica one has the equipartition identity
$$
1-\cos\theta_\eps=\eps\,\kappa_{\alpha_\eps}^2.
$$
Indeed, setting \(u=e^{s/\sqrt{2\eps}}\), by \eqref{eq:angolo_tangente} and \eqref{eq:curvatura},
\begin{equation*}
\begin{aligned}
1-\cos\theta_\eps
&=1-\cos(4\arctan u) 
=2\sin^2(2\arctan u) 
=2\left(\frac{2u}{1+u^2}\right)^2 
=\frac{8u^2}{(1+u^2)^2} \\
&
=\eps\left(\frac{4}{\sqrt{2\eps}}\frac{u}{1+u^2}\right)^2 
=\eps\,\kappa_{\alpha_\eps}^2.
\end{aligned}
\end{equation*}
Therefore, from \eqref{eq:eps_k_squared_ricciolo},
\begin{equation}\label{eq:caseC_elastica}
\int_{-\de}^{\de}|\partial_s\alpha_\eps-e_1|^2\,ds
=
2\eps\int_{-\de}^{\de}\kappa_{\alpha_\eps}^2\,ds
= 8 \sqrt{2\eps} \, \tanh\!\left(\frac{\de}{\sqrt{2\eps}}\right).
\end{equation}

We now estimate the contribution of each connecting arc. Parametrizing the arc by
the turning angle $t\in[0,\theta_\eps(-\de)]$, we have
$$
\partial_s\raccordo=(\cos t,\sin t),
\qquad
ds=r_\eps\,dt.
$$
Therefore
$$
\int_{\raccordo}|\partial_s\raccordo-e_1|^2\,ds
=
2r_\eps\int_0^{\theta_\eps(-\de)}(1-\cos t)\,dt
=
2r_\eps\bigl(\theta_\eps(-\de)-\sin(\theta_\eps(-\de))\bigr).
$$
We have
$$
\theta_\eps(-\de)-\sin(\theta_\eps(-\de))=O(\theta_\eps(-\de)^3)
$$
where, by \eqref{eq:angolo_tangente},
$$
\theta_\eps(-\de)=4\arctan(e^{-\de/\sqrt{2\eps}})
\le 4e^{-\de/\sqrt{2\eps}},
$$
while by \eqref{eq:raggio} one has $r_\eps=O(\sqrt\eps\,e^{\de/\sqrt{2\eps}})$.
Hence
$$
r_\eps\,\theta_\eps(-\de)^3
=
O\!\left(\sqrt\eps\,e^{-2\de/\sqrt{2\eps}}\right)
=
o(\sqrt\eps).
$$
Therefore each circular arc gives a contribution of order $o(\sqrt\eps)$ to the energy. Combining this with
\eqref{eq:caseC_split} and \eqref{eq:caseC_elastica}, we obtain
\begin{equation}\label{eq:caseC_key_final}
\int_0^{\ell(\keycon)}
|\partial_s\keycon-e_1|^2\,d\sigma
=
8\sqrt{2\eps}
\tanh\!\left(\frac{\de}{\sqrt{2\eps}}\right)
+
o(\sqrt\eps).
\end{equation}

Finally, inserting \eqref{eq:caseC_key_final} into \eqref{eq:caseC_reduction} and using
$\lambda_\eps\to1$, we conclude that
$$
\int_{I_2}|\partial_s\recovery(s)-\partial_s\gamma(s)|^2\,ds
=
8\sqrt{2\eps}\,
\tanh\!\left(\frac{\de}{\sqrt{2\eps}}\right)
+
o(\sqrt\eps).
$$
Thus we get
$$
\int_{I_2}|\partial_s\recovery(s)-\partial_s\gamma(s)|^2\,ds\leq C\eps^{1/2},
$$
and 
\begin{equation}\label{eq:limite_tre}
\lim_{\eps\to 0^+}
\frac{1}{2\eps^{1/2}}
\int_{I_2}|\partial_s\recovery(s)-\partial_s\gamma(s)|^2 \,ds
= 4\sqrt{2}.
\end{equation}
Collecting the estimates obtained on the intervals $I_1,\dots,I_5$, we obtain \eqref{eq:conv_H1_recovery}.

\medskip

Now we compute the energy of the recovery sequence. We first estimate the curvature term.
Recalling that
$$
\recovery(s)=\lambda_\eps\,\tilde\recovery\!\left(\frac{s}{\lambda_\eps}\right),
\qquad
\lambda_\eps=\frac{\ell}{\ell(\tilde\recovery)},
$$
a direct computation shows that
$$
\kappa_{\recovery}(s)
=
\frac{1}{\lambda_\eps}\,
\kappa_{\tilde\recovery}\!\left(\frac{s}{\lambda_\eps}\right).
$$
Therefore, by the change of variable $t=s/\lambda_\eps$, we obtain
\begin{align}\label{eq:energia}
\sqrt{\eps}\int_0^\ell\kappa_{\recovery}^2 \,ds
=
\frac{\sqrt{\eps}}{\lambda_\eps}
\int_0^{\ell(\tilde\recovery)}
\kappa_{\tilde\recovery}^2\,dt.
\end{align}

By construction
\begin{align}\label{eq:split}
\int_0^{\ell(\tilde\recovery)}\kappa_{\tilde\recovery}^2\,dt
&=
\int_{\hat\keycon}\kappa_{\hat\keycon}^2\,ds
+
\int_{\Sigma_\eps}\kappa_{\Sigma_\eps}^2\,ds
+
\int_{\gamma} \kappa_{\gamma}^2\,ds.
\end{align}
We start by noticing that
\begin{equation}\label{eq:alcune_curvature}
\kappa_{\rm int(\Sigma_\eps)}=0,
\end{equation}
where $\rm int(\Sigma_\eps)$ stands for the relative interior of $\Sigma_\eps$.
Moreover, from \eqref{eq:eps_k_squared_ricciolo} and \eqref{eq:energia_arco}, we get 
\begin{equation}\label{eq:altra_curvatura}
\int_0^{\ell(\hat\keycon)}\kappa_{\hat\keycon}^2\,ds
=
4\frac{\sqrt{2}}{\sqrt{\eps}}\tanh\!\left(\tfrac{\de}{\sqrt{2\eps}}\right)
+\frac{\theta_\eps}{r_\eps}.
\end{equation}
Therefore, plugging \eqref{eq:split} into \eqref{eq:energia}, we obtain
\begin{equation*}
\sqrt{\eps}\int_0^\ell\kappa_{\recovery}^2 \,ds
=
\frac{\sqrt{\eps}}{\lambda_\eps}
\left(
4\frac{\sqrt{2}}{\sqrt{\eps}}\tanh\!\left(\tfrac{\de}{\sqrt{2\eps}}\right)
+\frac{\theta_\eps}{r_\eps}
+\int_0^\ell \kappa_\gamma^2\,ds
\right).
\end{equation*}

Recalling that $\lambda_\eps\to1$, using \eqref{eq:scelta_de} and \eqref{eq:limIII}, and observing that
$$
\lim_{\eps\to0^+}\sqrt{\eps}\int_0^\ell \kappa_\gamma^2\,ds =0,
$$
we conclude that
\begin{equation}\label{eq:curv_limit_final}
\lim_{\eps\to0^+}
\sqrt{\eps}\int_0^\ell\kappa_{\recovery}^2 \,ds
=
4\sqrt{2}.
\end{equation}

On the other hand, by \eqref{eq:limite_uno}, \eqref{eq:limite_due} and \eqref{eq:limite_tre},
\begin{equation}\label{eq:tangent_limit_final}
\lim_{\eps\to 0^+}\frac1{2\eps^{1/2}}
\int_0^\ell |\partial_s\recovery-\partial_s\gamma|^2\,ds
= 4\sqrt2.
\end{equation}

We also claim that 
\begin{equation}\label{eq:recovery_limit_bis}
\lim_{\eps\to 0^+}
	\bigl\| (\kappa_{\recovery}-\kappa_\ga) - \omega \bigr\|_{\mathrm{flat}} = 0,
    \quad \text{ with } \quad \omega=2\pi \delta_{s_1}.
\end{equation}
Indeed, following the same strategy as in the compactness
Proposition~\ref{lem:compactness_closed_flat}, we fix $\radius>0$ and consider
 $(a_{\eps,\radius},b_{\eps,\radius})$, such that
$E_\radius^\eps
=
(a_{\eps,\radius},b_{\eps,\radius}).$
By construction of $\recovery$ and by the definition of $E_\radius^\eps$, it follows that
\begin{equation*}
a_{\eps,\radius}
\in
\big(
 \lambda_\eps s_1, \lambda_\eps(s_1+\ell(\hat\keycon))\big)
\end{equation*}
Now using \eqref{eq:aeps_to_a_closed}, \eqref{eq:lunghezza_keycon} and $\lambda_\eps\to 1$, we get that, for any fixed $\radius$, as $\eps \to 0^+$,
\begin{equation}\label{eq:ai_to_xi_closed}
a_{\eps,\radius} \to s_1.
\end{equation}
Combining \eqref{eq:flat_conv_closed} and \eqref{eq:ai_to_xi_closed}, we conclude that $0=\lim_{\eps\to 0+}\|(\kappa_{\gae}-\kappa_\ga)-2\pi\delta_{a_{\eps,\radius}}\|_{\mathrm{flat}}=\lim_{\eps\to 0+}\|(\kappa_{\gae}-\kappa_\ga)-2\pi\delta_{s_1}\|_{\mathrm{flat}}$, which is \eqref{eq:recovery_limit_bis}.

Combining  \eqref{eq:curv_limit_final},
\eqref{eq:tangent_limit_final} and \eqref{eq:recovery_limit_bis} we conclude that
$$\lim_{\eps\to0^+}\mathcal{G}_\eps(\recovery)=\sigma G(\ga,\omega).$$

\paragraph{The general case.}
We now extend the previous construction to the case of a finite number of curvature singularities. 
Let 
$$
0\leq s_1<\dots<s_N<\ell
$$
be points on the curve $\gamma:[0,\ell]\to\mathbb R^2$ at which the singularities are to be inserted. 

For each $i=1,\dots,N$, let $s_i'\in[0,\ell)$ be such that
$$
\partial_s\gamma(s_i')=-\partial_s\gamma(s_i),
$$
whose existence is guaranteed by Lemma~\ref{lem:opposite_tangent_point}.

For every $i=1,\dots,N$, let $\hat{\keycon}^i$ denote the rotated copy of the key construction inserted at $s_i$, and let $\Sigma^i_\eps$ denote the corresponding correcting segment inserted near $s_i'$. 
We assume that each $\hat{\keycon}^i$ and each $\Sigma^i_\eps$ is parametrized by arclength and satisfies the same matching conditions as in the case of a single singularity.

We collect all insertion points
$$
s_1,\dots,s_N,s_1',\dots,s_N'
$$
and reorder them increasingly along $[0,\ell)$, obtaining
$$
0\leq r_1\leq r_2\leq \dots\leq r_{2N}\leq \ell.
$$
For convenience, we also set $r_0:=0$ and $r_{2N+1}:=\ell$.

For each $j=1,\dots,2N$, we define the corresponding inserted piece $\upsilon^j_\eps$ by
$$
\upsilon^j_\eps=
\begin{cases}
\hat{\keycon}^i & \text{if } r_j=s_i \text{ for some } i\in\{1,\dots,N\},\\[4pt]
\Sigma^i_\epsilon & \text{if } r_j=s_i' \text{ for some } i\in\{1,\dots,N\}.
\end{cases}
$$
Moreover, for each $j=1,\dots,2N+1$, we denote by $\gamma^j$ the arc of $\gamma$ joining the endpoint of the $(j-1)$-st inserted piece to the initial point of the $j$-th inserted piece, with the convention that $\gamma^1$ starts at $\gamma(0)$ and $\gamma^{2N+1}$ ends at $\gamma(\ell)=\gamma(0)$.

With this notation, the preliminary recovery curve $\tilde\gamma_\eps$ is defined by concatenation as
$$
\tilde\gamma_\eps
=
\gamma^1
\ast
\upsilon^1_\eps
\ast
\gamma^2
\ast
\upsilon^2_\eps
\ast
\cdots
\ast
\gamma^{2N}
\ast
\upsilon^{2N}_\eps
\ast
\gamma^{2N+1}.
$$
Here the symbol $\ast$ denotes the usual concatenation of curves, after reparametrization by arclength on consecutive intervals.

Finally, as in the case of one singularity, we define the recovery sequence $\gamma_\eps$ by rescaling $\tilde\gamma_\eps$ through the homothety
$$
\lambda_\eps:=\frac{\ell}{\ell(\tilde\gamma_\eps)},
\qquad
\gamma_\eps(s):=
\lambda_\eps\,\tilde\gamma_\eps\!\left(\frac{s}{\lambda_\eps}\right),
\qquad s\in[0,\ell].
$$
Then $\gamma_\eps$ is a closed curve in $H^2([0,\ell];\mathbb R^2)$ satisfying $\ell(\gamma_\eps)=\ell$.

The estimates obtained in the case of just one curvature singularity extend to the present setting. Arguing exactly as above for each singularity, we deduce that
$$
\lim_{\eps\to0^+} \mathcal{G}_\eps(\gamma_\eps)
=
\sigma\, G(\gamma,\omega), \quad \text{ where } \omega=2\pi\sum_{j=1}^N c_j\delta_{s_j}, \, c_j\in\mathbb Z,\, 0\le s_1<\dots<s_N< \ell.
$$
This concludes the proof.


\section*{\texorpdfstring{\normalsize Acknowledgements.}{Acknowledgements.}}
\vspace{-0.8em}
All authors are members of the Gruppo Nazionale 
per l’Analisi Matematica, la Probabilità e le loro 
Applicazioni (GNAMPA) of the Istituto Nazionale di Alta Matematica (INdAM).
\bibliographystyle{plain}
\bibliography{refs_exp_gamma_elastiche}
\nada{
}
\end{document}